
%

\documentclass[final,3p,times,sort&compress]{elsarticle}



\usepackage{mathrsfs}        
\usepackage{subfigure}
\usepackage{graphicx}        
\usepackage{amssymb}         
\usepackage{amsmath}         
\usepackage{enumerate}       
\usepackage[colorlinks,
            linkcolor=red,
            anchorcolor=blue,
            citecolor=green
            ]{hyperref}
\usepackage{caption}

\usepackage{amsthm}          
\newtheorem{definition}{Definition}
\newtheorem{theorem}{Theorem}

\newtheorem{lemma}{Lemma}
\newtheorem{corollary}{Corollary}

\DeclareCaptionLabelSeparator{dotmark}{. }
\captionsetup[figure]{name=Fig.,labelsep=dotmark}

\begin{document}

\begin{frontmatter}

\phantomsection
\addcontentsline{toc}{chapter}{On some analytic properties of nabla tempered fractional calculus}
\title{On some analytic properties of nabla tempered fractional calculus}


\author[seu]{Yiheng Wei$^*$}
\author[nau]{Linlin Zhao}
\author[seu]{Xuan Zhao}
\author[seu,yu]{Jinde Cao}
\address[seu]{School of Mathematics, Southeast University, Nanjing, 211189, China}
\address[nau]{School of Business, Nanjing Audit University, Nanjing 211815, China}
\address[yu]{Yonsei Frontier Lab, Yonsei University, Seoul 03722, South Korea}

\cortext[cor1]{Corresponding author. E-mail: neudawei@seu.edu.cn.}

\begin{abstract}
Despite many applications regarding fractional calculus have been reported in literature, it is still unknown how to model some practical process. One major challenge in solving such a problem is that, the nonlocal property is needed while the infinite memory is undesired. Under this context, a new kind nabla fractional calculus accompanied by a tempered function is formulated. However, many properties of such fractional calculus needed to be discovered. From this, this paper gives particular emphasis to the topic. Some remarkable properties like the equivalence relation, the nabla Taylor formula, and the nabla Laplace transform for such nabla fractional calculus are developed and analyzed. It is believed that this work greatly enriches the mathematical theory of nabla tempered fractional calculus and provides high value and huge potential for further applications.
\end{abstract}

\begin{keyword}

Nabla fractional calculus\sep tempered case\sep nabla Taylor series\sep nabla Laplace transform.
\end{keyword}

\end{frontmatter}


\section{Introduction}\label{Section1}

Fractional calculus, as a powerful toolbox, has attracted an increasing attention from scholars in the past decades \cite{Ceresa:2022JMAA,Arora:2022Neu,Suzuki:2022JPNM,Reed:2022ARC}. Due to its nonlocal property, fractional calculus play a key role in diverse application of science and engineering. Also, nonlocal property brings many difficulties and challenges in modeling some special objects. From this, the past decade has witnessed a significant progress on tempered fractional calculus by introducing an extra weighted function. Notably, tempered fractional calculus has many merits, and henceforth many researchers have deployed themselves to explore valuables results. A huge work has been done for this subject \cite{Sabzikar:2015JCP,Almeida:2019JCAM,Fernandez:2020JCAM,Mali:2022MMA}, which makes a positive and profound impact.

Compared with the continuous time case, the research in discrete time case is just in its infancy and only few work has been reported preliminarily. I have to admit that the discrete time case performs better in computing, storage, transport than the continuous time case and it has greater potential in the digital era. The memory effect of delta tempered fractional calculus was investigated and applied to image processing \cite{Abdeljawad:2020Optik}. The tempered fractional derivative on an isolated time scale was defined and a new method was presented based on the time scale theory for numerical discretization in \cite{Fu:2021Fractals}. A general definition for nabla discrete time tempered fractional calculus was presented in \cite{Ferreira:2021NODY}. The tempered function was chosen as the nonzero case instead of the discrete exponential function, which greatly enrich the potential of the tempered fractional calculus. Though the study on discrete time tempered fractional calculus is still in sufficient, a proliferation of results reported on discrete time fractional calculus \cite{Cheng:2011Book,Ostalczyk:2015Book,Goodrich:2015Book,Ferreira:2022Book} could give us a lot of helpful inspiration.

The basic arithmetic and equivalence relations of fractional difference and fractional sum were discussed in \cite{Cheng:2011Book,Ostalczyk:2015Book,Goodrich:2015Book,Wei:2019JCND}. The continuity on the order of fractional difference was studied in \cite{Cheng:2011Book,Goodrich:2015Book,Wei:2019CNSNSa}. However, the Gr\"{u}nwald--Letnikov difference is not equivalent to the classical integer order case for integral order, which means that the existing results need extra conditions. The nabla Taylor series was first investigated systematically in \cite{Wei:2019CNSNSa}, while some of the existing results requires quite strict conditions and the nabla Taylor formula expanded at the current time was not discussed enough. A brief review was made on the nabla Laplace transform \cite{Wei:2019FDTA} and some interesting properties still could be built with the existing results. It is worth noting that some similar properties like the classical case can be checked for the tempered case and some seminal properties for the tempered case need to be investigated. Bearing this in mind, some fundamental properties will be developed carefully. However, it is not an easy task to complete this task, since the introduction of the tempered function brings some unexpected difficulty and damage some accustomed properties.

The main contribution of this work lies in the following aspects. \textsf{(i) The relationship between nabla tempered fractional difference/sum is explored. (ii) The nabla Taylor formula/series representation of nabla tempered fractional calculus is developed. (iii) The nabla Laplace transform for nabla tempered fractional difference/sum is derived.} Hopefully, these distinguished properties could be helpful for other researchers understand and apply such fractional calculus.

The remainder of this paper is summarized as follows. In Section \ref{Section2}, the basic concept and properties of nabla tempered fractional calculus are presented here. In Section \ref{Section3}, many foundational properties are developed for such fractional calculus. In Section \ref{Section4}, some concluding remarks are provided to end this paper.

\section{Preliminaries}\label{Section2}
In this section, the definitions and properties for nabla tempered fractional calculus are introduced.

For $x: \mathbb{N}_{a+1-n} \to \mathbb{R}$, its $n$-th nabla difference is defined by
\begin{equation}\label{Eq2.1}
{\textstyle \nabla^{n} x(k):=\sum_{i=0}^{n}(-1)^{i}\big(\begin{smallmatrix}
n \\
i
\end{smallmatrix}\big) x(k-i),}
\end{equation}
where $n \in \mathbb{Z}_{+}$, $k \in \mathbb{N}_{a+1}:=\{a+1,a+2,\cdots\}$, $a \in \mathbb{R}$, $\left(\begin{smallmatrix}p \\ q\end{smallmatrix}\right):=\frac{\Gamma(p+1)}{\Gamma(q+1) \Gamma(p-q+1)}$ is the generalized binomial coefficient and $\Gamma(\cdot)$ is the Gamma function.

For $x: \mathbb{N}_{a+1} \to \mathbb{R}$, its $\alpha$-th Gr\"{u}nwald--Letnikov difference/sum is defined by \cite{Ostalczyk:2015Book,Atici:2021FDC}
\begin{equation}\label{Eq2.2}
{\textstyle {}_{a}^{\rm G} \nabla_{k}^{\alpha} x(k):=\sum_{i=0}^{k-a-1}(-1)^{i}\big(\begin{smallmatrix}
\alpha \\
i
\end{smallmatrix}\big) x(k-i),}
\end{equation}
where $\alpha \in \mathbb{R}$, $k \in \mathbb{N}_{a+1}$ and $a \in \mathbb{R}$. When $\alpha>0$, ${ }_{a}^{\rm G} \nabla_{k}^{\alpha} x(k)$ represents the difference operation. When $\alpha<0$, ${}_{a}^{\rm G} \nabla_{k}^{\alpha} x(k)$ represents the sum operation including the fractional order case and the integer order case. Specially, ${}_{a}^{\rm G} \nabla_{k}^{0} x(k)=x(k)$. Even though $\alpha=n \in \mathbb{Z}_{+}$, ${ }_{a}^{\rm G} \nabla_{k}^{\alpha} x(k) \not \equiv \nabla^{n} x(k)$ for all $k \in \mathbb{N}_{a+1}$.

Defining the rising function $p\overline {^q} := \frac{\Gamma(p+q)}{\Gamma(p)}$, $p\in\mathbb{Z}_+$, $q\in\mathbb{R}$, (\ref{Eq2.2}) can rewritten as
\begin{equation}\label{Eq2.3}
{\textstyle
\begin{array}{rl}
{ }_{a}^{\mathrm{G}} \nabla_{k}^{\alpha} x(k)=&\hspace{-6pt}\sum_{i=0}^{k-a-1}(-1)^{i} \frac{\Gamma(1+\alpha)}{\Gamma(i+1) \Gamma(1+\alpha-i)} x(k-i) \\
=&\hspace{-6pt}\sum_{i=0}^{k-a-1} \frac{\Gamma(i-\alpha)}{\Gamma(i+1) \Gamma(-\alpha)} x(k-i) \\
=&\hspace{-6pt}\sum_{i=0}^{k-a-1} \frac{(i+1)^{\overline{-\alpha-1}}}{\Gamma(-\alpha)} x(k-i) \\
=&\hspace{-6pt}\sum_{i=a+1}^{k} \frac{(k-i+1)^{\overline{-\alpha-1}}}{\Gamma(-\alpha)} x(i),
\end{array}}
\end{equation}
where $\Gamma(\theta) \Gamma(1-\theta)=\frac{\pi}{\sin (\pi \theta)}$, $\theta \in \mathbb{R}$ is adopted.

From the previous definitions, the $\alpha$-th Riemann--Liouville fractional difference and Caputo fractional difference for $x: \mathbb{N}_{a+1-n} \to \mathbb{R}$, $\alpha \in(n-1, n)$, $n \in \mathbb{Z}_{+}$, $k \in \mathbb{N}_{a+1}$ and $a \in \mathbb{R}$ are defined by \cite{Goodrich:2015Book}
\begin{equation}\label{Eq2.4}
{\textstyle
{}_{a}^{\mathrm{R}} \nabla_{k}^{\alpha} x(k):=\nabla^n{}_{a}^{\rm G} \nabla_{k}^{\alpha-n} x(k), }
\end{equation}
\begin{equation}\label{Eq2.5}
{\textstyle
{ }_{a}^{\mathrm{C}} \nabla_{k}^{\alpha} x(k):={ }_{a}^{\rm G} \nabla_{k}^{\alpha-n} \nabla^{n} x(k) .}
\end{equation}
On this basis, the following properties hold.

\begin{lemma}\label{Lemma2.1}\cite{Goodrich:2015Book,Wei:2019JCND}
For any function $x: \mathbb{N}_{a+1-n} \to \mathbb{R}$, $n \in \mathbb{Z}_{+}$, $a \in \mathbb{R}$, one has
\begin{equation}\label{Eq2.6}
{\textstyle
{ }_{a}^{\rm R} \nabla_{k}^\alpha{}_ a^{ \rm G} \nabla_{k}^{-\alpha} x(k)={ }_{a}^{\rm C} \nabla_{k}^{\alpha}{}_a^{ \rm G} \nabla_{k}^{-\alpha} x(k)=x(k),}
\end{equation}
\begin{equation}\label{Eq2.7}
{\textstyle
{ }_{a}^{\rm G} \nabla_{k}^{-\alpha} {}_a^{\rm R} \nabla_{k}^{\alpha} x(k)=x(k)-\sum_{i=0}^{n-1} \frac{(k-a) \overline{^{\alpha-i-1}}}{\Gamma(\alpha-i)}[{ }_{a}^{\rm R} \nabla_{k}^{\alpha-i-1} x(k)]_{k=a},
}
\end{equation}
\begin{equation}\label{Eq2.8}
{\textstyle
{ }_{a}^{\rm G} \nabla_{k}^{-\alpha}{ }_{a}^{\rm C} \nabla_{k}^{\alpha} x(k)=x(k)-\sum_{i=0}^{n-1} \frac{(k-a) \overline{^i}}{i !}[\nabla^{i} x(k)]_{k=a} , }
\end{equation}
where $k \in \mathbb{N}_{a+1}$, $\alpha \in(n-1, n)$.
\end{lemma}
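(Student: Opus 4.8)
The plan is to reduce all three identities to two ingredients: the semigroup (index) law for the Gr\"{u}nwald--Letnikov operator, and a discrete summation-by-parts formula controlling how the classical difference $\nabla^{n}$ interacts with a fractional sum. First I would establish the composition law ${}_{a}^{\rm G}\nabla_{k}^{\mu}\,{}_{a}^{\rm G}\nabla_{k}^{\nu}x(k)={}_{a}^{\rm G}\nabla_{k}^{\mu+\nu}x(k)$ for arbitrary real $\mu,\nu$. Writing each operator in the convolution form of (\ref{Eq2.3}), the composition becomes a double sum whose inner kernel is $\sum_{j}(-1)^{j}\binom{\mu}{j}(-1)^{m-j}\binom{\nu}{m-j}$; by Vandermonde's convolution identity this collapses to $(-1)^{m}\binom{\mu+\nu}{m}$, which is exactly the kernel of ${}_{a}^{\rm G}\nabla_{k}^{\mu+\nu}$. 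The only point to verify is that the summation terminals match, which they do because both operators share the lower limit $a$.

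Given the index law, (\ref{Eq2.6}) follows almost immediately. For the Riemann--Liouville side I write ${}_{a}^{\rm R}\nabla_{k}^{\alpha}\,{}_{a}^{\rm G}\nabla_{k}^{-\alpha}x(k)=\nabla^{n}\,{}_{a}^{\rm G}\nabla_{k}^{\alpha-n}\,{}_{a}^{\rm G}\nabla_{k}^{-\alpha}x(k)=\nabla^{n}\,{}_{a}^{\rm G}\nabla_{k}^{-n}x(k)$, and then observe that the $n$-fold sum ${}_{a}^{\rm G}\nabla_{k}^{-n}x$ vanishes together with its first $n-1$ classical differences at $k=a$ (empty-sum initial data), so the classical $n$-th difference inverts it by telescoping/induction and returns $x(k)$. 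For the Caputo side I would first prove $\nabla^{n}\,{}_{a}^{\rm G}\nabla_{k}^{-\alpha}x(k)={}_{a}^{\rm G}\nabla_{k}^{n-\alpha}x(k)$ --- again because the accumulation kills the boundary corrections --- and then invoke the index law to get ${}_{a}^{\rm G}\nabla_{k}^{\alpha-n}\,{}_{a}^{\rm G}\nabla_{k}^{n-\alpha}={}_{a}^{\rm G}\nabla_{k}^{0}=\mathrm{id}$.

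The substance of the lemma lies in (\ref{Eq2.7}) and (\ref{Eq2.8}), where the order of the operators is reversed and the boundary terms survive. For (\ref{Eq2.8}) the index law collapses ${}_{a}^{\rm G}\nabla_{k}^{-\alpha}\,{}_{a}^{\rm C}\nabla_{k}^{\alpha}x(k)={}_{a}^{\rm G}\nabla_{k}^{-n}\nabla^{n}x(k)$, reducing the fractional statement to the integer-order case; the latter is the discrete Taylor formula with remainder, obtained by iterating the telescoping identity ${}_{a}^{\rm G}\nabla_{k}^{-1}\nabla y(k)=y(k)-y(a)$ and tracking the $(k-a)^{\overline{i}}/i!$ coefficients. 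For (\ref{Eq2.7}) the index law cannot be applied directly, since $\nabla^{n}$ sits between the two operators and does not commute with ${}_{a}^{\rm G}\nabla_{k}^{-\alpha}$. Instead I would establish the fractional summation-by-parts identity ${}_{a}^{\rm G}\nabla_{k}^{-\alpha}\nabla y(k)={}_{a}^{\rm G}\nabla_{k}^{1-\alpha}y(k)-\tfrac{(k-a)^{\overline{\alpha-1}}}{\Gamma(\alpha)}y(a)$ (provable by Abel summation together with the rising-factorial recurrence $(m+1)^{\overline{\alpha-1}}-m^{\overline{\alpha-1}}=(\alpha-1)(m+1)^{\overline{\alpha-2}}$) and iterate it $n$ times, each pass shedding one boundary term while the clean part eventually recombines through the index law into $x(k)$.

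The main obstacle is precisely this bookkeeping: showing that the boundary terms generated at each step reassemble into $\sum_{i=0}^{n-1}\tfrac{(k-a)^{\overline{\alpha-i-1}}}{\Gamma(\alpha-i)}[{}_{a}^{\rm R}\nabla_{k}^{\alpha-i-1}x(k)]_{k=a}$. The key identification is that the value shed at the $i$-th pass, namely $[\nabla^{n-i-1}\,{}_{a}^{\rm G}\nabla_{k}^{\alpha-n}x(k)]_{k=a}$, equals $[{}_{a}^{\rm R}\nabla_{k}^{\alpha-i-1}x(k)]_{k=a}$ by the very definition (\ref{Eq2.4}) of the lower-order Riemann--Liouville difference. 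This is where the non-equivalence of $\nabla^{n}$ and ${}_{a}^{\rm G}\nabla_{k}^{n}$ stressed in the preliminary remark must be handled with care, because it is exactly that mismatch at the terminal $k=a$ that produces, rather than annihilates, the remainder sum.
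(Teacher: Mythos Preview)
The paper does not prove Lemma 2.1 itself --- it is cited from the literature --- but it does prove the tempered generalization in Theorem~\ref{Theorem3.2}, and setting $w(k)\equiv 1$ there recovers exactly your argument: the index law for (\ref{Eq2.6}), the reduction to ${}_a^{\rm G}\nabla_k^{-n}\nabla^n$ plus the discrete Taylor remainder for (\ref{Eq2.8}), and the iterated summation-by-parts with the identification $[\nabla^{n-i-1}\,{}_a^{\rm G}\nabla_k^{\alpha-n}x(k)]_{k=a}=[{}_a^{\rm R}\nabla_k^{\alpha-i-1}x(k)]_{k=a}$ for (\ref{Eq2.7}). Your proposal is correct and essentially identical to the paper's route.
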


By introducing a tempered function $w: \mathbb{N}_{a+1} \to \mathbb{R} \backslash\{0\}$, the concept of nabla fractional calculus can be extended further.

For $x: \mathbb{N}_{a+1} \to \mathbb{R}$, its $\alpha$-th Gr\"{u}nwald--Letnikov tempered difference/sum is defined by \cite{Ferreira:2021NODY}
\begin{equation}\label{Eq2.9}
{\textstyle
{ }_{a}^{\rm G} \nabla_{k}^{\alpha, w(k)} x(k):=w^{-1}(k)_{a}^{\rm G} \nabla_{k}^{\alpha}[w(k) x(k)],
}
\end{equation}
where $\alpha \in \mathbb{R}, k \in \mathbb{N}_{a+1}, a \in \mathbb{R}$ and $w: \mathbb{N}_{a+1} \to \mathbb{R} \backslash\{0\}$.

The $n$-th nabla tempered difference, the $\alpha$-th Riemann--Liouville tempered fractional difference and Caputo tempered fractional difference of $x: \mathbb{N}_{a+1-n} \to \mathbb{R}$ can be defined by
\begin{equation}\label{Eq2.10}
{\textstyle
\nabla^{n, w(k)} x(k):=w^{-1}(k) \nabla^{n}[w(k) x(k)],}
\end{equation}
\begin{equation}\label{Eq2.11}
{\textstyle
{ }_{a}^{\mathrm{R}} \nabla_{k}^{\alpha, w(k)} x(k):=w^{-1}(k){}_{a}^{\mathrm{R}} \nabla_{k}^{\alpha}[w(k) x(k)],}
\end{equation}
\begin{equation}\label{Eq2.12}
{\textstyle
{ }_{a}^{\rm C} \nabla_{k}^{\alpha, w(k)} x(k):=w^{-1}(k){}_{a}^{\rm C} \nabla_{k}^{\alpha}[w(k) x(k)],}
\end{equation}
respectively, where $\alpha \in(n-1, n), n \in \mathbb{Z}_{+}, k \in \mathbb{N}_{a+1}, a \in \mathbb{R}$ and $w: \mathbb{N}_{a+1} \to \mathbb{R} \backslash\{0\}$. On this basis, the following relationships hold
\begin{equation}\label{Eq2.13}
{\textstyle
{ }_{a}^{\rm R} \nabla_{k}^{\alpha, w(k)} x(k)=\nabla^{n, w(k)}{}_{a}^{\rm G} \nabla_{k}^{\alpha-n, w(k)} x(k),}
\end{equation}

\begin{equation}\label{Eq2.14}
{\textstyle
{ }_{a}^{\rm C} \nabla_{k}^{\alpha, w(k)} x(k)={ }_{a}^{\rm G} \nabla_{k}^{\alpha-n, w(k)} \nabla^{n, w(k)} x(k) .}
\end{equation}

The equivalent condition of $w: \mathbb{N}_{a+1} \to \mathbb{R} \backslash\{0\}$ is finite nonzero. In this work, when $w(k)=(1-\lambda)^{k-a}$, $\lambda \in \mathbb{R} \backslash\{1\}$, the operations $\nabla^{n, w(k)} x(k),{ }_{a}^{\rm G} \nabla_{k}^{\alpha, w(k)} x(k)$, ${ }_{a}^{\rm R} \nabla_{k}^{\alpha, w(k)} x(k),{ }_{a}^{\rm C} \nabla_{k}^{\alpha, w(k)} x(k)$ could be abbreviate as $\nabla^{n, \lambda} x(k),{ }_{a}^{\rm G} \nabla_{k}^{\alpha, \lambda} x(k),{ }_{a}^{\rm R} \nabla_{k}^{\alpha, \lambda} x(k)$, ${ }_{a}^{\rm C} \nabla_{k}^{\alpha, \lambda} x(k)$, respectively. Notably, this special case is different from the one in \cite{Ferreira:2021NODY}, which facilitates the use and analysis. \textsf{Compared to existing results, the tempered function $w(k)$ is no longer limited to the exponential function, which makes this work more general and practical.}

By using the linearity, the following lemma can be derived immediately, which is simple while useful for understanding such fractional calculus.
\begin{lemma}\label{Lemma2.2}
For any function $x: \mathbb{N}_{a+1-n} \to \mathbb{R}$, $n \in \mathbb{Z}_{+}$, $a \in \mathbb{R}$, finite nonzero $w(k)$, $\alpha\in(n-1,n)$, $k \in \mathbb{N}_{a+1}$, $\lambda \in\mathbb{R}\backslash\{0\}$, one has
\begin{equation}\label{Eq2.15}
{\textstyle
\left\{ \begin{array}{rl}
{\nabla ^{n,w(k)}}x(k) =&\hspace{-6pt} {\nabla ^{n,\rho w(k)}}x(k),\\
{}_a^{\rm{G}}\nabla _k^{\alpha ,w(k)}x(k) =&\hspace{-6pt} {}_a^{\rm{G}}\nabla _k^{\alpha ,\lambda w(k)}x(k),\\
{}_a^{\rm{G}}\nabla _k^{ - \alpha ,w(k)}x(k) =&\hspace{-6pt} {}_a^{\rm{G}}\nabla _k^{ - \alpha ,\lambda w(k)}x(k),\\
{}_a^{\rm{R}}\nabla _k^{\alpha ,w(k)}x(k) =&\hspace{-6pt} {}_a^{\rm{R}}\nabla _k^{\alpha ,\lambda w(k)}x(k),\\
{}_a^{\rm{C}}\nabla _k^{\alpha ,w(k)}x(k) =&\hspace{-6pt} {}_a^{\rm{C}}\nabla _k^{\alpha ,\lambda w(k)}x(k).
\end{array} \right.}
\end{equation}
\end{lemma}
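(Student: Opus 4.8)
The plan is to exploit the one structural feature common to all five operations: each is obtained by conjugating a \emph{linear} untempered operator with multiplication by the weight $w$. Writing $M_w$ for the pointwise multiplication operator $(M_w x)(k)=w(k)x(k)$, the definitions (\ref{Eq2.9})--(\ref{Eq2.12}) all take the form $M_w^{-1}\,L\,M_w$, where $L$ is one of $\nabla^n$, ${}_a^{\rm G}\nabla_k^{\alpha}$, ${}_a^{\rm G}\nabla_k^{-\alpha}$, ${}_a^{\rm R}\nabla_k^{\alpha}$, ${}_a^{\rm C}\nabla_k^{\alpha}$. Since $\lambda\in\mathbb{R}\backslash\{0\}$ and $w(k)$ is finite nonzero, $\lambda w(k)$ is again finite nonzero, so $M_{\lambda w}=\lambda M_w$ is invertible with $M_{\lambda w}^{-1}=\lambda^{-1}M_w^{-1}$, and the $\lambda w$-tempered operation is well defined. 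This is exactly where the hypothesis $\lambda\neq 0$ enters.

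First I would record that each untempered operator $L$ above is $\mathbb{R}$-linear. For $\nabla^n$ and ${}_a^{\rm G}\nabla_k^{\pm\alpha}$ this is immediate from the finite-sum definitions (\ref{Eq2.1})--(\ref{Eq2.3}); for ${}_a^{\rm R}\nabla_k^{\alpha}$ and ${}_a^{\rm C}\nabla_k^{\alpha}$ it follows because (\ref{Eq2.4})--(\ref{Eq2.5}) present them as compositions of linear operators. Linearity then gives the operator identity $L\,M_{\lambda w}=L(\lambda M_w)=\lambda\,L\,M_w$.

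The verification is a single cancellation. For any of the five choices of $L$,
\begin{equation*}
M_{\lambda w}^{-1}\,L\,M_{\lambda w}
=\lambda^{-1}M_w^{-1}\,(\lambda\,L\,M_w)
=M_w^{-1}\,L\,M_w,
\end{equation*}
so the factor $\lambda^{-1}$ from the outer weight cancels the $\lambda$ pulled out from inside, and the operation is unchanged. Taking $L=\nabla^n$ yields the first identity of (\ref{Eq2.15}), $L={}_a^{\rm G}\nabla_k^{\pm\alpha}$ the second and third, and $L={}_a^{\rm R}\nabla_k^{\alpha},{}_a^{\rm C}\nabla_k^{\alpha}$ the last two. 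As an alternative for the last two, one may instead compose the already-established invariances of $\nabla^{n,w(k)}$ and ${}_a^{\rm G}\nabla_k^{\alpha-n,w(k)}$ through the relations (\ref{Eq2.13})--(\ref{Eq2.14}).

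I expect no substantive obstacle: the entire content is the linearity of the underlying difference and sum operators, which the paper's finite-sum and composition definitions make transparent. The only point deserving a word of care is the well-definedness noted above, namely that $\lambda w(k)$ remains a finite nonzero tempered function so that $M_{\lambda w}^{-1}$ exists.
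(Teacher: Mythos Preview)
Your argument is correct and matches the paper's own justification, which simply states that the lemma follows immediately from linearity without giving further details. Your conjugation formulation $M_{\lambda w}^{-1}LM_{\lambda w}=M_w^{-1}LM_w$ makes explicit exactly the cancellation the paper has in mind.
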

Note that Lemma \ref{Lemma2.2} is indeed the scale invariance. When $\lambda=-1$, the sign of $w(k)$ is just reversed to $\lambda w(k)$. From this, one is ready to claim that if a property on tempered calculus holds for $w(k)>0$, it also holds for $w(k)<0$.

\section{Main Results}\label{Section3}
In this section, a series of nice properties will be nicely developed on the nabla tempered fractional calculus.
\subsection{The basic relationship}
In this part, the relationship between tempered nabla fractional difference/sum will be discussed.

\begin{theorem}\label{Theorem3.1}
For any $x: \mathbb{N}_{a+1-n} \to \mathbb{R}$, $\alpha \in(n-1, n)$, $n \in \mathbb{Z}_{+}$, $k \in \mathbb{N}_{a+1}$, $a \in \mathbb{R}$, $w: \mathbb{N}_{a+1} \to \mathbb{R} \backslash\{0\}$, one has
\begin{equation}\label{Eq3.1}
{\textstyle
{ }_{a}^{\mathrm{R}} \nabla_{k}^{\alpha, w(k)} x(k)={ }_{a}^{\rm G} \nabla_{k}^{\alpha, w(k)} x(k),}
\end{equation}
\begin{equation}\label{Eq3.2}
{\textstyle
{ }_{a}^{\mathrm{R}} \nabla_{k}^{\alpha, w(k)} x(k)={ }_{a}^{\mathrm{C}} \nabla_{k}^{\alpha, w(k)} x(k)+\sum_{i=0}^{n-1} \frac{(k-a)^{\overline{i-\alpha}}}{\Gamma(i-\alpha+1)} \frac{w(a)}{w(k)}[\nabla^{i, w(k)} x(k)]_{k=a},}
\end{equation}
\begin{equation}\label{Eq3.3}
{\textstyle
{ }_{a}^{\rm G} \nabla_{k}^{-\alpha, w(k)} x(k)=\nabla^{n, w(k)}{}_a^{ \mathrm{G}} \nabla_{k}^{-\alpha-n, w(k)} x(k) .
}
\end{equation}
\end{theorem}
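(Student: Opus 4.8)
The plan is to exploit the conjugation structure of the tempered operators. By definitions (\ref{Eq2.9})--(\ref{Eq2.12}), every tempered quantity equals $w^{-1}(k)$ times an untempered Grünwald--Letnikov-type operator applied to $w(k)x(k)$, and whenever two tempered operators are composed the inner factors $w\cdot w^{-1}$ telescope; e.g. $\nabla^{1,w(k)}\,{}_{a}^{\rm G}\nabla_k^{\beta,w(k)}x(k)=w^{-1}(k)\nabla\,{}_{a}^{\rm G}\nabla_k^{\beta}[w(k)x(k)]$. Hence all three identities reduce to purely untempered statements about ${}_{a}^{\rm G}\nabla_k^{\cdot}$, and the whole argument rests on two elementary one-step composition lemmas, which I would establish first by direct manipulation of the defining sums (\ref{Eq2.2})--(\ref{Eq2.3}).

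The first lemma is $\nabla\,{}_{a}^{\rm G}\nabla_k^{\beta}x(k)={}_{a}^{\rm G}\nabla_k^{\beta+1}x(k)$ for every real $\beta$. I would prove it by writing $\nabla[{}_{a}^{\rm G}\nabla_k^{\beta}x(k)]={}_{a}^{\rm G}\nabla_k^{\beta}x(k)-{}_{a}^{\rm G}\nabla_{k-1}^{\beta}x(k-1)$, shifting the summation index in the second truncated sum, and collapsing the two coefficient sequences by Pascal's identity $\binom{\beta}{i}+\binom{\beta}{i-1}=\binom{\beta+1}{i}$; the truncation limits match cleanly, including the boundary case $k=a+1$ where both sides equal $x(a+1)$, so no correction term survives. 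Conjugating by $w$ upgrades this to $\nabla^{1,w(k)}\,{}_{a}^{\rm G}\nabla_k^{\beta,w(k)}x(k)={}_{a}^{\rm G}\nabla_k^{\beta+1,w(k)}x(k)$. Iterating $n$ times and combining with the defining relation (\ref{Eq2.13}) yields (\ref{Eq3.1}); applying the same iteration with $\beta=-\alpha-n$ gives (\ref{Eq3.3}) directly.

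For (\ref{Eq3.2}) I need the companion lemma with the factors in the opposite order, ${}_{a}^{\rm G}\nabla_k^{\beta}\,\nabla x(k)={}_{a}^{\rm G}\nabla_k^{\beta+1}x(k)-\frac{(k-a)^{\overline{-\beta-1}}}{\Gamma(-\beta)}x(a)$, which this time produces a boundary term because the index shift exposes the endpoint value $x(a)$; the surviving coefficient $(-1)^{k-a-1}\binom{\beta}{k-a-1}$ is rewritten as $\frac{(k-a)^{\overline{-\beta-1}}}{\Gamma(-\beta)}$ through the Gamma-function form already used in (\ref{Eq2.3}). Feeding $\nabla^{n}$ into ${}_{a}^{\rm G}\nabla_k^{\alpha-n}$ one factor at a time through this lemma, starting at $\beta=\alpha-n$, telescopes the remaining orders back up to $\alpha$ and accumulates the initial-value sum, giving the untempered relation ${}_{a}^{\mathrm{R}}\nabla_k^{\alpha}y(k)={}_{a}^{\mathrm{C}}\nabla_k^{\alpha}y(k)+\sum_{i=0}^{n-1}\frac{(k-a)^{\overline{i-\alpha}}}{\Gamma(i-\alpha+1)}[\nabla^{i}y(k)]_{k=a}$ after the re-indexing $i\mapsto n-1-i$ and invoking (\ref{Eq3.1}) to replace ${}_{a}^{\rm G}\nabla_k^{\alpha}$ by ${}_{a}^{\mathrm{R}}\nabla_k^{\alpha}$. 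Finally I would set $y(k)=w(k)x(k)$, multiply through by $w^{-1}(k)$, and use $[\nabla^{i,w(k)}x(k)]_{k=a}=w^{-1}(a)[\nabla^{i}(w(k)x(k))]_{k=a}$ to identify the tempered coefficient $\frac{w(a)}{w(k)}[\nabla^{i,w(k)}x(k)]_{k=a}$ appearing in (\ref{Eq3.2}).

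The main obstacle is the bookkeeping in the second lemma and its $n$-fold iteration: keeping the truncation limits, the index re-indexing, and the rising-factorial/Gamma conversions consistent so that each shed boundary term lands on the correct order $i-\alpha$ with the correct normalization $\Gamma(i-\alpha+1)^{-1}$. Everything else follows routinely from the conjugation identity and the Pascal collapse.
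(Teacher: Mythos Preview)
Your proposal is correct and takes a genuinely different route from the paper. The paper works throughout in the kernel (rising-factorial) representation: for (\ref{Eq3.1}) it applies $\nabla$ to the convolution sum $\sum_{j=a+1}^{k}\frac{(k-j+1)^{\overline{n-\alpha-1}}}{\Gamma(n-\alpha)}z(j)$ one layer at a time using the Leibniz-type rule $\nabla\sum_{j=a+1}^{k}f(j,k)=\sum_{j=a+1}^{k}\nabla_{k}f(j,k)+f(k,k-1)$ together with $\nabla\frac{(k-a)^{\overline{\beta}}}{\Gamma(\beta+1)}=\frac{(k-a)^{\overline{\beta-1}}}{\Gamma(\beta)}$, and then disposes of the accumulated end-point terms via $\frac{0^{\overline{i-\alpha}}}{\Gamma(i-\alpha+1)}=0$; for (\ref{Eq3.2}) it uses Abel summation by parts on the same kernel; and for (\ref{Eq3.3}) it simply quotes an external reference. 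You instead stay entirely at the binomial-coefficient level of (\ref{Eq2.2}) and reduce all three identities to the Pascal collapse $\binom{\beta}{i}+\binom{\beta}{i-1}=\binom{\beta+1}{i}$ through your two one-step lemmas, the only distinction being whether the index shift exposes the endpoint value $x(a)$. Your route is more elementary and fully self-contained---no summation by parts, no external citation, and no separate appeal to the vanishing of $0^{\overline{i-\alpha}}/\Gamma(i-\alpha+1)$---while the paper's kernel computation makes the rising-factorial structure more visible and aligns naturally with the Taylor-formula arguments used later in Section~3.2.
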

\begin{proof}
Let $z(k):=w(k) x(k)$. By using \cite[Theorem 3.57, Theorem 3.41]{Goodrich:2015Book}, i.e., $\nabla \frac{(k-a)^{\overline{\beta}}}{\Gamma(\beta+1)}=\frac{(k-a)^{\overline{\beta-1}}}{\Gamma(\beta)}, \beta \in \mathbb{R} \backslash \mathbb{Z}_{+}, \nabla \sum_{j=a+1}^{k} f(j, k)=\sum_{j=a+1}^{k} \nabla f(j, k)+f(k, k-1)$, one has
\begin{equation}\label{Eq3.4}
{\textstyle \begin{array}{rl}
{}_a^{\rm R}\nabla _{k}^{\alpha,w(k)} x(k)=&\hspace{-6pt}w^{-1}(k){}_a^{\rm R}\nabla _k^\alpha z(k)\\
 =&\hspace{-6pt}w^{-1}(k){\nabla ^n}{}_a^{\rm G}\nabla _k^{\alpha  - n}z(k)\\
 =&\hspace{-6pt} w^{-1}(k){\nabla ^n}\sum\nolimits_{j = a + 1}^k {\frac{{ ( {k - j + 1}  )\overline {^{n - \alpha  - 1}} }}{{\Gamma  ( {n - \alpha }  )}}z(j)}\\
 =&\hspace{-6pt} w^{-1}(k){\nabla ^{n - 1}}[\sum\nolimits_{j = a + 1}^k {\nabla \frac{{ ( {k - j + 1}  )\overline {^{n - \alpha  - 1}} }}{{\Gamma  ( {n - \alpha }  )}}z(j)}  + \frac{{0\overline {^{n - \alpha  - 1}} }}{{\Gamma  ( {n - \alpha }  )}}z(k)]\\
 =&\hspace{-6pt}w^{-1}(k){\nabla ^{n - 1}}\sum\nolimits_{j = a + 1}^k {\frac{{ ( {k - j + 1}  )\overline {^{n - \alpha  - 2}} }}{{\Gamma  ( {n - \alpha  - 1}  )}}z(j)}  + w^{-1}(k)\frac{{0\overline {^{n - \alpha  - 1}} }}{{\Gamma  ( {n - \alpha }  )}}{\nabla ^{n - 1}}z(k)\\
 =&\hspace{-6pt} w^{-1}(k){\nabla ^{n - 2}}\sum\nolimits_{j = a + 1}^k {\nabla \frac{{ ( {k - j + 1}  )\overline {^{n - \alpha  - 2}} }}{{\Gamma  ( {n - \alpha  - 1}  )}}z(j)}  + w^{-1}(k)\frac{{0\overline {^{n - \alpha  - 2}} }}{{\Gamma  ( {n - \alpha  - 1}  )}}{\nabla ^{n - 2}}z(k)\\
 &\hspace{-6pt}+ w^{-1}(k)\frac{{0\overline {^{n - \alpha  - 1}} }}{{\Gamma  ( {n - \alpha }  )}}{\nabla ^{n - 1}}z(k)\\
 =&\hspace{-6pt} w^{-1}(k)\sum\nolimits_{j = a + 1}^k {\frac{{ ( {k - j + 1}  )\overline {^{-\alpha  - 1}} }}{{\Gamma  ( { - \alpha }  )}}z(j)}  + w^{-1}(k)\sum\nolimits_{i = 0}^{n - 1} {\frac{{0\overline {^{ i- \alpha}} }}{{\Gamma  ( { i- \alpha + 1}  )}}{\nabla ^i}z(k)}\\
=&\hspace{-6pt} {}_a^{\rm G}\nabla _k^{\alpha ,w(k) }x(k) + \sum\nolimits_{i = 0}^{n - 1} {\frac{{0\overline {^{ i- \alpha}} }}{{\Gamma  ( { i- \alpha  + 1} )}}{\nabla ^{i,w(k) }}x(k)} ,
\end{array}}
\end{equation}
in which all the first order differences are taken with respect to $k$. By using $\frac{0^{\overline{-\beta}}}{\Gamma(1-\beta)}=0$, $\beta \in \mathbb{R}\backslash \mathbb{Z}_{+} $ and the finite $\nabla^{i, w(k)} x(k)$, $i=0,1, \cdots, n-1$, $k \in \mathbb{N}_{a+1}$, the desired result in (\ref{Eq3.1}) can be further derived from (\ref{Eq3.4}).

By applying the formula of summation by parts in \cite[Theorem 3.39]{Goodrich:2015Book}, i.e., $\sum_{j=a+1}^{k} \nabla f(j) g(j)=\left.f(j) g(j)\right|_{j=a} ^{j=k}-\sum_{j=a+1}^{k} f(j-1) \nabla g(j)$, it follows
\begin{equation}\label{Eq3.5}
\begin{array}{rl}
{}_a^{\rm G}\nabla _k^{\alpha,w(k)} x(k)=&\hspace{-6pt}w^{-1}(k){}_a^{\rm G}\nabla _k^\alpha z(k) \\
=&\hspace{-6pt} w^{-1}(k)\sum\nolimits_{j = a + 1}^k {\frac{{ ( {k - j + 1} )\overline {^{ - \alpha  - 1}} }}{{\Gamma  ( { - \alpha }  )}}z(j)} \\
 =&\hspace{-6pt}w^{-1}(k)  - \sum\nolimits_{j = a + 1}^k {\nabla \frac{{ ( {k - j}  )\overline {^{ - \alpha }} }}{{\Gamma  ( {1 - \alpha }  )}}z(j)} \\
 =&\hspace{-6pt}w^{-1}(k) \frac{{ ( {k - a}  )\overline {^{ - \alpha }} }}{{\Gamma  ( {1 - \alpha }  )}}z ( a  ) - \frac{{0\overline {^{ - \alpha }} }}{{\Gamma  ( {1 - \alpha }  )}}z(k) + w^{-1}(k)\sum\nolimits_{j = a + 1}^k {\frac{{ ( {k - j + 1}  )\overline {^{ - \alpha }} }}{{\Gamma  ( {1 - \alpha }  )}}\nabla z(j)} \\
 =&\hspace{-6pt}w^{-1}(k) \sum\nolimits_{j = a + 1}^k {\frac{{ ( {k - j + 1}  )\overline {^{n - \alpha  - 1}} }}{{\Gamma  ( {n - \alpha  + 1}  )}}{\nabla ^n}z(j)}  + w^{-1}(k)\sum\nolimits_{i = 0}^{n - 1} {\frac{{ ( {k - a}  )\overline {^{i - \alpha }} }}{{\Gamma  ( {i - \alpha  + 1}  )}}} { [ {{\nabla ^i}z(k)} ]_{k = a}}\\
 &\hspace{-6pt}- w^{-1}(k)\sum\nolimits_{i = 0}^{n - 1} {\frac{{0\overline {^{i - \alpha }} }}{{\Gamma  ( {i - \alpha  + 1}  )}}{\nabla ^i}z(k)}\\
 =&\hspace{-6pt}{}_a^{\rm C}\nabla _k^{\alpha,w(k)} x(k)+\sum\nolimits_{i = 0}^{n - 1} {\frac{{ ( {k - a}  )\overline {^{i - \alpha }} }}{{\Gamma  ( {i - \alpha  + 1}  )}}} \frac{w(a)}{w(k)} [ {{\nabla ^{i,w(k)}}z(k)}  ]_{k = a}\\
 &\hspace{-6pt}- \sum\nolimits_{i = 0}^{n - 1} {\frac{{0\overline {^{ i- \alpha}} }}{{\Gamma  ( { i- \alpha  + 1} )}}{\nabla ^{i,w(k) }}x(k)}.
\end{array}
\end{equation}
The result in (\ref{Eq3.2}) can be further developed by using (\ref{Eq3.4}) and (\ref{Eq3.5}).

By utilizing  \cite[Theorem 4]{Wei:2019JCND}, one has
\begin{equation}\label{Eq3.6}
\begin{array}{rl}
{}_a^{\rm G}\nabla _k^{ - \alpha ,w(k)}x(k) =&\hspace{-6pt} {w^{ - 1}}(k){}_a^{\rm G}\nabla _k^{ - \alpha }z(k)\\
 =&\hspace{-6pt} {w^{ - 1}}(k){\nabla ^n}{}_a^{\rm{G}}\nabla _k^{ - \alpha  - n}z(k)\\
 =&\hspace{-6pt} {w^{ - 1}}(k){\nabla ^n}w(k){w^{ - 1}}(k){}_a^{\rm{G}}\nabla _k^{ - \alpha  - n}w(k)x(k)\\
 =&\hspace{-6pt} {\nabla ^{n,w(k)}}{}_a^{\rm{G}}\nabla _k^{ - \alpha-n ,w(k)}x(k),
\end{array}
\end{equation}
which is just (\ref{Eq3.3}). The proof is completed.
\end{proof}

Theorem \ref{Theorem3.1} presents the basic relation between different fractional differences. Note that the finite value assumption on $\nabla^{i, w(k)} x(k)$ is not necessary for (3.2). When $w(k)=1$, the tempered case reduces to the classical case \cite[Theorem 4]{Wei:2019JCND}, \cite[Theorem 3]{Wei:2019CNSNSa}. $w(k) \neq 0$, while it can be positive or negative, for example, $w(k)=\sin \left(k \pi-a \pi+\frac{\pi}{4}\right), k \in \mathbb{N}_{a+1}$.

\begin{theorem}\label{Theorem3.2}
For any $\alpha\in(n-1,n)$, $n\in\mathbb{Z}_+$, $w: \mathbb{N}_{a+1} \to \mathbb{R} \backslash\{0\}$, $k\in\mathbb{N}_{a+1}$, $a\in\mathbb{R}$, one has
\begin{equation}\label{Eq3.7}
{\textstyle {}_a^{\rm R}\nabla _{k }^{\alpha,w(k)} {}_a^{\rm G}\nabla _{k }^{ - \alpha,w(k) }x(k) = {}_a^{\rm C}\nabla _{k }^{\alpha,w(k)} {}_a^{\rm G}\nabla _{k }^{ - \alpha,w(k) }x(k) = x(k),}
\end{equation}
\begin{equation}\label{Eq3.8}
{\textstyle
\begin{array}{l}
{}_a^{\rm G}\nabla _{k }^{ - \alpha,w(k) }{}_a^{\rm R}\nabla _{k }^{\alpha,w(k)} x(k) = x(k) - \sum\nolimits_{i = 0}^{n - 1} {\frac{{ ( {k - a}  )\overline {^{\alpha  - i - 1}} }}{{\Gamma ( {\alpha  - i}  )}}}\frac{w(a)}{w(k)} {[ {{}_a^{\rm R}\nabla _k^{\alpha  - i - 1,w(k)}x(k)} ]_{k = a}},
\end{array}}
\end{equation}
\begin{equation}\label{Eq3.9}
{\textstyle {}_a^{\rm G}\nabla _{k }^{ - \alpha,w(k) }{}_a^{\rm C}\nabla _{k }^{\alpha,w(k)} x(k) =  x(k) - \sum\nolimits_{i = 0}^{n - 1} {\frac{{ ( {k - a}  )\overline {^i} }}{{i!}}} \frac{w(a)}{w(k)} {[ {{\nabla ^{i,w(k)}}x(k)} ]_{k = a}}.}
\end{equation}
\end{theorem}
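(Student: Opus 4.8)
The plan is to reduce every identity in Theorem \ref{Theorem3.2} to its non-tempered counterpart in Lemma \ref{Lemma2.1} via the substitution $z(k):=w(k)x(k)$, exactly as in the proof of Theorem \ref{Theorem3.1}. The tempered operators in (\ref{Eq2.9}), (\ref{Eq2.11}), (\ref{Eq2.12}) are by construction a conjugation of the classical operators by multiplication with $w(k)$, so when two tempered operators are composed the intermediate factors $w(k)$ and $w^{-1}(k)$ cancel, leaving a classical composition sandwiched between an outer $w^{-1}(k)$ and the replacement of $x$ by $z$.

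For (\ref{Eq3.7}), I would first write ${}_a^{\rm G}\nabla_k^{-\alpha,w(k)}x(k)=w^{-1}(k){}_a^{\rm G}\nabla_k^{-\alpha}z(k)$, then apply the outer Riemann--Liouville (respectively Caputo) tempered operator through its definition (\ref{Eq2.11}) (respectively (\ref{Eq2.12})). The middle product $w(k)w^{-1}(k)$ collapses, so the left-hand side becomes $w^{-1}(k){}_a^{\rm R}\nabla_k^{\alpha}{}_a^{\rm G}\nabla_k^{-\alpha}z(k)$ and, in the same way, $w^{-1}(k){}_a^{\rm C}\nabla_k^{\alpha}{}_a^{\rm G}\nabla_k^{-\alpha}z(k)$. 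Invoking (\ref{Eq2.6}) gives $w^{-1}(k)z(k)=x(k)$ in both cases, settling (\ref{Eq3.7}) at once. The same collapse reduces the left-hand sides of (\ref{Eq3.8}) and (\ref{Eq3.9}) to $w^{-1}(k){}_a^{\rm G}\nabla_k^{-\alpha}{}_a^{\rm R}\nabla_k^{\alpha}z(k)$ and $w^{-1}(k){}_a^{\rm G}\nabla_k^{-\alpha}{}_a^{\rm C}\nabla_k^{\alpha}z(k)$, to which (\ref{Eq2.7}) and (\ref{Eq2.8}) apply verbatim.

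The principal point requiring care---and the only genuine obstacle---is the translation of the resulting boundary terms, which emerge as the classical operators acting on $z$ evaluated at $k=a$, back into the tempered operators acting on $x$. Here I would read the defining relations (\ref{Eq2.10}) and (\ref{Eq2.11}) in reverse: since ${}_a^{\rm R}\nabla_k^{\alpha-i-1,w(k)}x(k)=w^{-1}(k){}_a^{\rm R}\nabla_k^{\alpha-i-1}z(k)$ and $\nabla^{i,w(k)}x(k)=w^{-1}(k)\nabla^{i}z(k)$, evaluation at $k=a$ yields $[{}_a^{\rm R}\nabla_k^{\alpha-i-1}z(k)]_{k=a}=w(a)[{}_a^{\rm R}\nabla_k^{\alpha-i-1,w(k)}x(k)]_{k=a}$ and $[\nabla^{i}z(k)]_{k=a}=w(a)[\nabla^{i,w(k)}x(k)]_{k=a}$. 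Multiplying each of these by the surviving outer factor $w^{-1}(k)$ produces precisely the weight $\frac{w(a)}{w(k)}$ that appears in (\ref{Eq3.8}) and (\ref{Eq3.9}). The remaining verification is purely algebraic bookkeeping, and the proof is completed.
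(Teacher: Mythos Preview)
Your proof is correct and follows the same conjugation strategy as the paper: collapse the intermediate $w(k)w^{-1}(k)$ factors and reduce each composition to the corresponding identity in Lemma~\ref{Lemma2.1}. For (\ref{Eq3.8}) your direct appeal to (\ref{Eq2.7}) is in fact more economical than the paper's own argument, which instead re-derives that classical identity from scratch via a summation-by-parts computation in (\ref{Eq3.12}); for (\ref{Eq3.7}) and (\ref{Eq3.9}) the two proofs are essentially identical.
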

\begin{proof}
From Lemma \ref{Lemma2.1} and \cite[Theorem 2]{Wei:2019JCND}, one has
\begin{equation}\label{Eq3.10}
{\textstyle \begin{array}{rl}
{}_a^{\rm R}\nabla _k^{\alpha ,w(k) }{}_a^{\rm G}\nabla _k^{ - \alpha ,w(k) }x(k) =&\hspace{-6pt} {\nabla ^{n,w(k) }}{}_a^{\rm G}\nabla _k^{\alpha  - n,w(k) }{}_a^{\rm G}\nabla _k^{ - \alpha ,w(k) }x(k)\\
 =&\hspace{-6pt} {w^{-1}(k)}{\nabla ^{n }}{}_a^{\rm G}\nabla _k^{\alpha  - n }{}_a^{\rm G}\nabla _k^{ - \alpha  }[ w(k)x(k) ]\\
 =&\hspace{-6pt} {w^{-1}(k)}{\nabla ^n}{}_a^{\rm G}\nabla _k^{ - n}[ w(k)x(k) ]\\
 =&\hspace{-6pt} x(k).
\end{array}}
\end{equation}

Similarly, by using Lemma \ref{Lemma2.1} and \cite[Theorem 4]{Wei:2019JCND}, it follows
\begin{equation}\label{Eq3.11}
{\textstyle \begin{array}{rl}
{}_a^{\rm C}\nabla _k^{\alpha ,w(k) }{}_a^{\rm G}\nabla _k^{ - \alpha ,w(k) }x(k) =&\hspace{-6pt} {}_a^{\rm G}\nabla _k^{\alpha  - n,w(k) }{\nabla ^{n,w(k) }}{}_a^{\rm G}\nabla _k^{ - \alpha ,w(k) }x(k)\\
 =&\hspace{-6pt} w^{-1}(k){}_a^{\rm G}\nabla _k^{\alpha  - n }{\nabla ^{n}}{}_a^{\rm G}\nabla _k^{ - \alpha }[w(k)x(k)]\\
 =&\hspace{-6pt} w^{-1}(k){}_a^{\rm G}\nabla _k^{\alpha  - n }{}_a^{\rm G}\nabla _k^{ n- \alpha }[w(k)x(k)]\\
 =&\hspace{-6pt} w^{-1}(k){}_a^{\rm G}\nabla _k^{0}[w(k)x(k)]\\
 =&\hspace{-6pt} x(k).
\end{array}}
\end{equation}
Combining (\ref{Eq3.10}) and (\ref{Eq3.11}) yields (\ref{Eq3.7}).

Assume $z(k) := w(k)x(k)$. It follows
\begin{equation}\label{Eq3.12}
\begin{array}{rl}
{}_a^{\rm G}\nabla _{k }^{ - \alpha,w(k) }{}_a^{\rm R}\nabla _{k }^{\alpha,w(k)} x(k)=&\hspace{-6pt}w^{-1}(k){}_a^{\rm G}\nabla _k^{ - \alpha }{}_a^{\rm R}\nabla _k^\alpha z(k)\\
=&\hspace{-6pt}w^{-1}(k)\sum\nolimits_{j = a + 1}^k {\frac{{ ( {k - j + 1}  )\overline {^{\alpha  - 1}} }}{{\Gamma  ( \alpha   )}}{}_a^{\rm R}\nabla _j^\alpha z(j)} \\
 =&\hspace{-6pt} w^{-1}(k)\nabla \sum\nolimits_{j = a + 1}^k {\frac{{ ( {k - j + 1}  )\overline {^\alpha } }}{{\Gamma  ( {\alpha  + 1}  )}}{}_a^{\rm R}\nabla _j^\alpha z(j)} \\
 =&\hspace{-6pt} w^{-1}(k)\nabla \sum\nolimits_{j = a + 1}^k {\frac{{ ( {k - j + 1}  )\overline {^\alpha } }}{{\Gamma  ( {\alpha  + 1}  )}}{\nabla ^n}_a^{\rm G}\nabla _j^{\alpha  - n}z(j)} \\
 =&\hspace{-6pt} w^{-1}(k)\nabla \sum\nolimits_{j = a + 1}^k {\frac{{ ( {k - j + 1}  )\overline {^{\alpha  - n}} }}{{\Gamma  ( {\alpha  - n + 1}  )}}{}_a^{\rm G}\nabla _j^{\alpha  - n}z(j)} \\
  &\hspace{-6pt}- w^{-1}(k){\sum\nolimits_{i = 0}^{n - 1} {[ {{\nabla ^{n - i - 1}}{}_a^{\rm G}\nabla _k^{\alpha  - n}z(k)}]} _{k = a}\frac{{ ( {k - a}  )\overline {^{\alpha  - i - 1}} }}{{\Gamma  ( {\alpha  - i} )}}}\\
 =&\hspace{-6pt} w^{-1}(k)\nabla {}_a^{\rm G}\nabla _k^{n - \alpha  - 1}{}_a^{\rm G}\nabla _k^{\alpha  - n}z(k)\\
  &\hspace{-6pt}- w^{-1}(k){\sum\nolimits_{i = 0}^{n - 1} {[ {{}_a^{\rm R}\nabla _k^{\alpha  - i - 1}z(k)} ]} _{k = a}\frac{{ ( {k - a}  )\overline {^{\alpha  - i - 1}} }}{{\Gamma  ( {\alpha  - i}  )}}}\\
  =&\hspace{-6pt} w^{-1}(k){}_a^{\rm G}\nabla _k^{n - \alpha }{}_a^{\rm G}\nabla _k^{\alpha  - n}z(k)\\
  &\hspace{-6pt}- {\sum\nolimits_{i = 0}^{n - 1} \frac{w(a)}{w(k)}{[ {{}_a^{\rm R}\nabla _k^{\alpha  - i - 1}z(k)} ]} _{k = a}\frac{{ ( {k - a}  )\overline {^{\alpha  - i - 1}} }}{{\Gamma  ( {\alpha  - i}  )}}}\\
 =&\hspace{-6pt} w^{-1}(k)z(k)- {\sum\nolimits_{i = 0}^{n - 1} \frac{{ ( {k - a}  )\overline {^{\alpha  - i - 1}} }}{{\Gamma  ( {\alpha  - i}  )}}\frac{w(a)}{w(k)}{[ {{}_a^{\rm R}\nabla _k^{\alpha  - i - 1,w(k) }x(k)} ]} _{k = a}},
\end{array}
\end{equation}
which implies (\ref{Eq3.8}).

For the Caputo case, one has
\begin{equation}\label{Eq3.13}
\begin{array}{rl}
{}_a^{\rm G}\nabla _k^{ - \alpha,w(k) }{}_a^{\rm C}\nabla _k^{\alpha,w(k)} x(k)=&\hspace{-6pt}w^{-1}(k){}_a^{\rm G}\nabla _k^{ - \alpha }{}_a^{\rm C}\nabla _k^\alpha z(k)\\
=&\hspace{-6pt}w^{-1}(k){}_a^{\rm G}\nabla _k^{ - \alpha }{}_a^{\rm G}\nabla _k^{\alpha  - n}{\nabla ^n}z(k)\\
=&\hspace{-6pt}w^{-1}(k) {}_a^{\rm G}\nabla _k^{ - n}{\nabla ^n}z(k)\\
=&\hspace{-6pt}w^{-1}(k) z(k) -w^{-1}(k){\sum\nolimits_{i = 0}^{n - 1} {\frac{{ ( {k - a}  )\overline {^i} }}{{i!}}[ {{\nabla ^i}z(k)} ]} _{k = a}}\\
=&\hspace{-6pt}x(k) -{\sum\nolimits_{i = 0}^{n - 1} \frac{w(a)}{w(k)}{\frac{{ ( {k - a} )\overline {^i} }}{{i!}}[ {{\nabla ^{i,w(k) }}x(k)} ]} _{k = a}},
\end{array}
\end{equation}
which leads to (\ref{Eq3.9}). All of these complete the proof.
\end{proof}

Theorem \ref{Theorem3.2} gives the relation between tempered fractional difference and tempered fractional sum, which can be the generalization of \cite[Corollary 5]{Wei:2019CNSNSa}. Along this way, letting $\beta\in(m-1,m)$, $m,n\in\mathbb{Z}_+$, $m\le n$, $k\in\mathbb{N}_{a+1}$, then one has
\[
\left\{\begin{array}{l}
{}_a^{\rm R}\nabla _{k }^{n-\beta,w(k)} {}_a^{\rm C}\nabla _{k }^{ \beta,w(k) }x(k) =\nabla ^{n,w(k) }x(k),\\
{}_a^{\rm R}\nabla _{k }^{\beta,w(k)} {}_a^{\rm C}\nabla _{k }^{ n-\beta,w(k) }x(k) =\nabla ^{n,w(k) }x(k),\\
{}_a^{\rm C}\nabla _{k }^{n-\beta,w(k)} {}_a^{\rm R}\nabla _{k }^{ \beta,w(k) }x(k) ={}_a^{\rm G}\nabla _{k }^{n,w(k) }x(k),\\
{}_a^{\rm C}\nabla _{k }^{\beta,w(k)} {}_a^{\rm R}\nabla _{k }^{ n-\beta,w(k) }x(k) ={}_a^{\rm G}\nabla _{k }^{n,w(k) }x(k).
\end{array}\right.
\]

By rearranging (\ref{Eq3.9}) further, one has
\begin{equation}\label{Eq3.14}
{\textstyle
\begin{array}{rl}
x(k)   =&\hspace{-6pt} \sum\nolimits_{i = 0}^{n - 1} {{\frac{{ ( {k - a}  )\overline {^i} }}{{i!}}} \frac{w(a)}{w(k)}[ {{\nabla ^{i,w(k)}}x(k)} ]_{k = a}}+ \sum\nolimits_{j = a + 1}^k {{}\frac{{ ( {k - j + 1}  )\overline {^{\alpha-1}} }}{{\Gamma(\alpha)}}\frac{w(j)}{w(k)}{{}_a^{\rm C}\nabla_j ^{\alpha,w(j)}}x ( j )}\\
=&\hspace{-6pt} \sum\nolimits_{i = 0}^{n - 1} {{\frac{{ ( {k - a}  )\overline {^i} }}{{i!}}} \frac{w(a)}{w(k)}[ {{\nabla ^{i,w(k)}}x(k)} ]_{k = a}}+ \sum\nolimits_{j = a + 1}^k {{}\frac{{ ( {k - j + 1}  )\overline {^{n-1}} }}{{\Gamma(n)}}\frac{w(j)}{w(k)}{\nabla ^{n,w(j)}}x ( j )},
\end{array}}
\end{equation}
which can be regarded as \textsf{the nabla Taylor expansion} of $x(k)$ at $k=a$ with summation reminder. The summation might be the fractional order case ${}_a^{\rm G}\nabla _k^{ - \alpha,w(k) }{}_a^{\rm C}\nabla _k^{\alpha,w(k)} x(k)$ or the integer order case ${}_a^{\rm G}\nabla _k^{ - n,w(k) }\nabla ^{n,w(k)} x(k)$.

In a similar way, for any $m\in\mathbb{Z}_+$, $m<\alpha$, one has
\begin{equation}\label{Eq3.15}
{\textstyle
\begin{array}{rl}
{\nabla ^{m,w(k)}}x(k)   =&\hspace{-6pt} \sum\nolimits_{i = m}^{n-1} {\frac{ ( {k - a}  )\overline {^{i-m}} }{{(i-m)!}}\frac{w(a)}{w(k)}[{{\nabla ^{i,w(k)}}x(k)}]_{k=a}} + \sum\nolimits_{j = a + 1}^k {{}\frac{{ ( {k - j + 1}  )\overline {^{\alpha-m-1}} }}{{\Gamma(\alpha-m)}}\frac{w(j)}{w(k)}{}_a^{\rm C}\nabla_j ^{\alpha,w(k)}x ( j )}.
\end{array}}
\end{equation}

From the definition, one can derive that
\begin{equation}\label{Eq3.16}
{\textstyle
\begin{array}{rl}
{}_a^{\rm G}\nabla _k^{n,w(k) }x(k) =&\hspace{-6pt} {\nabla ^{n,w(k) }}x(k)+ \left \{ \begin{array}{rl}
\sum\nolimits_{i = k - a}^n {{{ ( { - 1}  )}^i} ( {\begin{smallmatrix}
n\\
i
\end{smallmatrix}}  )\frac{w(k-i)}{w(k)}x ( {k - i}  )} ,&\hspace{-6pt}k < a + n,\\
0 ,&\hspace{-6pt}k \ge a + n,
\end{array}  \right.
\end{array}}
\end{equation}
where $n\in\mathbb{Z}_+$, $k\in \mathbb{N}_{a+1}$, $a\in\mathbb{R}$ and $w(k)\neq0$. In other words, ${}_{a}^{\rm G}{\nabla}_{k}^{n,w(k)} x(k)$ is not always identical to ${\nabla}^{ n,w(k) } x(k)$. To explore more details, the limit of the fractional order case will be discussed.

\begin{theorem}\label{Theorem3.3}
For any $x : \mathbb{N}_{a+1} \to \mathbb{R}$, $\alpha>0$, $k\in\mathbb{N}_{a+1}$, $a\in\mathbb{R}$, $w: \mathbb{N}_{a+1} \to \mathbb{R} \backslash\{0\}$, ${}_a^{\rm G}\nabla _k^{-\alpha,w(k)}x(k)$ is continuous with respect to $\alpha$ and one has
\begin{equation}\label{Eq3.17}
{\textstyle \mathop {\lim }\limits_{\alpha  \to 0} {}_a^{\rm G}\nabla _k^{-\alpha,w(k)}x(k) = x(k)}.
\end{equation}
\end{theorem}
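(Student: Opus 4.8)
The plan is to reduce everything to the explicit finite-sum representation of the tempered sum operator and then argue termwise. First I would combine the definition (\ref{Eq2.9}) with the Gr\"{u}nwald--Letnikov formula (\ref{Eq2.3}) to write, for $\alpha>0$,
\[
{}_a^{\rm G}\nabla_k^{-\alpha,w(k)} x(k) = w^{-1}(k)\sum\nolimits_{i=a+1}^{k} \frac{(k-i+1)^{\overline{\alpha-1}}}{\Gamma(\alpha)}\, w(i)\, x(i),
\]
and, after the substitution $j=k-i$, recast this as
\[
{}_a^{\rm G}\nabla_k^{-\alpha,w(k)} x(k) = \sum\nolimits_{j=0}^{k-a-1} \frac{\Gamma(j+\alpha)}{\Gamma(j+1)\Gamma(\alpha)}\, \frac{w(k-j)}{w(k)}\, x(k-j),
\]
using $(j+1)^{\overline{\alpha-1}}=\Gamma(j+\alpha)/\Gamma(j+1)$. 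The decisive structural observation is that because $k\in\mathbb{N}_{a+1}$, the quantity $k-a$ is a positive integer, so this is a genuinely finite sum of $k-a$ terms. The factors $\tfrac{w(k-j)}{w(k)}x(k-j)$ are constant in $\alpha$, so the entire $\alpha$-dependence sits in the scalar coefficients $c_j(\alpha):=\frac{\Gamma(j+\alpha)}{\Gamma(j+1)\Gamma(\alpha)}$.

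For continuity, I would note that on $(0,\infty)$ the Gamma function is continuous and strictly positive, hence $\Gamma(\alpha)\neq 0$ and each $c_j$ is continuous in $\alpha$. A finite linear combination of continuous functions is continuous, which establishes the continuity of ${}_a^{\rm G}\nabla_k^{-\alpha,w(k)} x(k)$ in $\alpha$ and simultaneously legitimizes passing the limit inside the sum.

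For the limit, I would evaluate each coefficient as $\alpha\to 0^{+}$. For $j=0$ the coefficient collapses exactly: $c_0(\alpha)=\frac{\Gamma(\alpha)}{\Gamma(1)\Gamma(\alpha)}\equiv 1$. For $j\ge 1$ the numerator $\Gamma(j+\alpha)\to\Gamma(j)$ is finite while $1/\Gamma(\alpha)\to 0$ on account of the simple pole of $\Gamma$ at the origin, so $c_j(\alpha)\to 0$. Passing the limit through the finite sum leaves only the $j=0$ term, giving $\frac{w(k)}{w(k)}x(k)=x(k)$, which is the claimed identity (\ref{Eq3.17}).

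The main obstacle, and it is a mild one, is the pole of $\Gamma$ at $\alpha=0$: one must verify that the vanishing of $1/\Gamma(\alpha)$ is precisely what annihilates the $j\ge 1$ terms, while the $j=0$ coefficient is identically $1$ rather than merely tending to $1$. The finiteness of the sum is what makes the termwise limit valid without any uniform-convergence or dominated-convergence machinery; had the upper index been infinite, one would instead need a growth estimate on $\Gamma(j+\alpha)/\Gamma(j+1)$ to control the tail.
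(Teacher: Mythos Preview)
Your argument is correct. Writing the tempered sum as the finite sum
\[
{}_a^{\rm G}\nabla_k^{-\alpha,w(k)} x(k)=\sum_{j=0}^{k-a-1} c_j(\alpha)\,\frac{w(k-j)}{w(k)}\,x(k-j),\qquad c_j(\alpha)=\frac{\Gamma(j+\alpha)}{\Gamma(j+1)\Gamma(\alpha)},
\]
and observing $c_0\equiv 1$ while $c_j(\alpha)=\dfrac{\alpha(\alpha+1)\cdots(\alpha+j-1)}{j!}\to 0$ for $j\ge 1$, gives continuity on $(0,\infty)$ and the limit (\ref{Eq3.17}) at once. The finiteness of the sum is exactly what makes the termwise passage to the limit legitimate, as you note.

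The paper proceeds differently: it first applies summation by parts to replace the kernel $\frac{(k-j+1)^{\overline{\alpha-1}}}{\Gamma(\alpha)}$ by $\frac{(k-j+1)^{\overline{\alpha}}}{\Gamma(\alpha+1)}$ acting on $\nabla[w(j)x(j)]$, plus a boundary term. The point of this rewriting is that the new kernel has $\Gamma(\alpha+1)$ in the denominator and is therefore manifestly continuous in a full neighborhood of $\alpha=0$; each coefficient then tends to $1$ (rather than to $0$ or $1$ according to a case split), and the limiting sum telescopes back to $x(k)$. Your route is more elementary---no integration-by-parts identity, no case distinction hidden behind an identity---and it also sidesteps the need to evaluate $w$ and $x$ at $k=a$, which the paper's boundary term requires even though the hypotheses only guarantee these functions on $\mathbb{N}_{a+1}$. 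The paper's route, on the other hand, yields an expression that is visibly smooth across $\alpha=0$ and generalizes more readily to the analysis of ${}_a^{\rm C}\nabla_k^{\alpha,w(k)}$ and ${}_a^{\rm R}\nabla_k^{\alpha,w(k)}$ carried out immediately afterward in Theorem~\ref{Theorem3.4}.
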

\begin{proof}
By using (\ref{Eq2.6}), (\ref{Eq2.9}) and the formula of summation by parts, it follows
\begin{equation}\label{Eq3.18}
{\textstyle
\begin{array}{rl}
{}_a^{\rm G}\nabla _k^{-\alpha,w(k)}x(k) =&\hspace{-6pt} w^{-1}(k)\sum\nolimits_{j = a + 1}^k {{}\frac{{ ( {k - j + 1}  )\overline {^{\alpha-1}} }}{{\Gamma(\alpha+1)}}w(j)x (j)}\\
=&\hspace{-6pt}w^{-1}(k)\sum\nolimits_{j = a + 1}^k {{}\frac{{ ( {k - j + 1}  )\overline {^{\alpha}} }}{{\Gamma(\alpha+1)}}\nabla[w(j)x (j)]}-w^{-1}(k)\big[\frac{{ ( {k - j}  )\overline {^{\alpha}} }}{{\Gamma(\alpha)}}w(j)x(j)\big]_{j=a}^{j=k}\\
=&\hspace{-6pt}w^{-1}(k)\sum\nolimits_{j = a + 1}^k {\frac{{ ( {k - j + 1}  )\overline {^{\alpha}} }}{{\Gamma(\alpha+1)}}\nabla[w(j)x (j)]}+\frac{{ ( {k - a}  )\overline {^{\alpha}} }}{{\Gamma(\alpha+1)}}\frac{w(a)}{w(k)}x(a).
\end{array}}
\end{equation}
Since $\frac{{ ( {k - j + 1}  )\overline {^{\alpha}} }}{{\Gamma(\alpha+1)}}$ is continuous with respect to $\alpha$ for any $j\in\mathbb{N}_{a+1}^{k}$, ${}_a^{\rm G}\nabla _k^{-\alpha,w(k)}x(k)$ is also continuous. Due to the fact of $\mathop {\lim }\limits_{\alpha  \to 0}{\frac{{ ( {k - j + 1}  )\overline {^{\alpha}} }}{{\Gamma(\alpha+1)}}}=1$, taking limit for both sides of (\ref{Eq3.18}), one has
\begin{equation}\label{Eq3.19}
{\textstyle
\begin{array}{rl}
\mathop {\lim }\limits_{\alpha  \to 0}{{}_a^{\rm G}\nabla _k^{-\alpha,w(k)}x(k)} =&\hspace{-6pt} w^{-1}(k)\sum\nolimits_{j = a + 1}^k {\nabla[w(j)x (j)]}+\frac{w(a)}{w(k)}x(a)= x(k).
\end{array}}
\end{equation}
The proof is thus done.
\end{proof}

\begin{theorem}\label{Theorem3.4}
For any $x : \mathbb{N}_{a+1-n} \to \mathbb{R}$, $a\in\mathbb{R}$, $n\in\mathbb{Z}_+$, $\alpha\in(n-1,n)$, $w: \mathbb{N}_{a+1} \to \mathbb{R} \backslash\{0\}$, ${}_a^{\rm C}\nabla _k^{\alpha ,w(k) }x(k)$, $k\in\mathbb{N}_{a+1}$ and ${}_a^{\rm R}\nabla _k^{\alpha ,w(k) }x(k)$, $k\in\mathbb{N}_{a+n+1}$ are continuous with respect to $\alpha$ and the following limits hold
\begin{equation}\label{Eq3.20}
{\textstyle \mathop {\lim }\limits_{\alpha  \to n } {}_a^{\rm C}\nabla _k^{\alpha ,w(k) }x(k) = {\nabla ^{n,w(k) }}x(k)}, k\in\mathbb{N}_{a+1},
\end{equation}
\begin{equation}\label{Eq3.21}
{\textstyle
\mathop {\lim }\limits_{\alpha  \to n - 1} {}_a^{\rm C}\nabla _k^{\alpha ,w(k) }x(k) + \frac{w(a)}{w(k)}{ [ {{\nabla ^{n - 1,w(k) }}x(k)}  ]_{k = a}} = {\nabla ^{n - 1,w(k) }}x(k),k\in\mathbb{N}_{a+1},}
\end{equation}
\begin{equation}\label{Eq3.22}
{\textstyle \mathop {\lim }\limits_{\alpha  \to n} {}_a^{\rm R}\nabla _k^{\alpha ,w(k) }x(k)  = {\nabla ^{n,w(k) }}x(k)}, k\in\mathbb{N}_{a+n+1},
\end{equation}
\begin{equation}\label{Eq3.23}
{\textstyle
\mathop {\lim }\limits_{\alpha  \to n - 1} {}_a^{\rm R}\nabla _k^{\alpha ,w(k) }x(k) ={\nabla ^{n - 1,w(k) }}x(k), k\in\mathbb{N}_{a+n}.}
\end{equation}
\end{theorem}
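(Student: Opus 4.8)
The plan is to reduce every assertion to the classical (untempered) case by setting $z(k):=w(k)x(k)$, since by (\ref{Eq2.10})--(\ref{Eq2.12}) one has ${}_a^{\rm C}\nabla_k^{\alpha,w(k)}x(k)=w^{-1}(k)\,{}_a^{\rm C}\nabla_k^{\alpha}z(k)$, ${}_a^{\rm R}\nabla_k^{\alpha,w(k)}x(k)=w^{-1}(k)\,{}_a^{\rm R}\nabla_k^{\alpha}z(k)$ and $\nabla^{n,w(k)}x(k)=w^{-1}(k)\nabla^{n}z(k)$, where the prefactor $w^{-1}(k)$ does not depend on $\alpha$. Hence continuity in $\alpha$ and all four limits are inherited from the corresponding statements for $z$, and the only place $w$ re-enters is in rewriting $w^{-1}(k)\nabla^{i}z(a)=\frac{w(a)}{w(k)}[\nabla^{i,w(k)}x(k)]_{k=a}$ at the very end.

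For the Caputo part I would start from the summation representation ${}_a^{\rm C}\nabla_k^{\alpha}z(k)=\sum_{j=a+1}^{k}\frac{(k-j+1)^{\overline{n-\alpha-1}}}{\Gamma(n-\alpha)}\nabla^{n}z(j)$ obtained by combining (\ref{Eq2.3}) and (\ref{Eq2.5}); this is a finite sum whose coefficients $\frac{(k-j+1)^{\overline{n-\alpha-1}}}{\Gamma(n-\alpha)}=\frac{\Gamma(k-j+n-\alpha)}{\Gamma(k-j+1)\Gamma(n-\alpha)}$ are continuous in $\alpha$ on $(n-1,n)$, so continuity of ${}_a^{\rm C}\nabla_k^{\alpha,w(k)}x(k)$ on $\mathbb{N}_{a+1}$ is immediate. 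The two limits then follow by evaluating these coefficients at the endpoints: as $\alpha\to n$ the pole of $\Gamma(n-\alpha)$ annihilates every term except $j=k$, which tends to $1$, giving $\nabla^{n}z(k)$ and hence (\ref{Eq3.20}); as $\alpha\to n-1$ each coefficient tends to $1$, so the sum telescopes to $\sum_{j=a+1}^{k}\nabla^{n}z(j)=\nabla^{n-1}z(k)-\nabla^{n-1}z(a)$, and after dividing by $w(k)$ and moving the boundary term to the other side I recover (\ref{Eq3.21}).

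For the Riemann--Liouville part I would not restart but instead invoke the already established decomposition (\ref{Eq3.2}), which writes ${}_a^{\rm R}\nabla_k^{\alpha,w(k)}x(k)$ as ${}_a^{\rm C}\nabla_k^{\alpha,w(k)}x(k)$ plus $\sum_{i=0}^{n-1}\frac{(k-a)^{\overline{i-\alpha}}}{\Gamma(i-\alpha+1)}\frac{w(a)}{w(k)}[\nabla^{i,w(k)}x(k)]_{k=a}$. The Caputo limits are then in hand, so everything reduces to the behaviour of $c_i(\alpha):=\frac{(k-a)^{\overline{i-\alpha}}}{\Gamma(i-\alpha+1)}=\frac{\Gamma(k-a+i-\alpha)}{\Gamma(k-a)\Gamma(i-\alpha+1)}$. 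As $\alpha\to n$ the denominator $\Gamma(i-\alpha+1)$ hits a pole for every $i\le n-1$ while, provided $k\ge a+n+1$, the numerator stays finite, so each $c_i\to 0$ and (\ref{Eq3.22}) follows. As $\alpha\to n-1$ the same mechanism forces $c_i\to 0$ for $i\le n-2$ (this time needing $k\ge a+n$), whereas $c_{n-1}\to 1$; this surviving term is exactly the leftover boundary term carried by the Caputo limit (\ref{Eq3.21}), and the two cancel to give $\nabla^{n-1,w(k)}x(k)$, i.e. (\ref{Eq3.23}).

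The delicate point, and the main obstacle, is precisely the analysis of $c_i(\alpha)$ near the integers, where a naive inspection suggests a $0\cdot\infty$ or $\infty/\infty$ indeterminacy. Writing $\epsilon=n-\alpha\to0^{+}$ and comparing the residues of $\Gamma(k-a+i-\alpha)$ and $\Gamma(i-\alpha+1)$ at their poles shows that a term fails to vanish exactly when $k-a+i-n$ is a non-positive integer; excluding this for the relevant range of $i$ is what pins down the sharper domains $k\in\mathbb{N}_{a+n+1}$ for $\alpha\to n$ and $k\in\mathbb{N}_{a+n}$ for $\alpha\to n-1$, in contrast to the Caputo statements which hold throughout $\mathbb{N}_{a+1}$. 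Care with the rising-factorial identities and the residue structure of $\Gamma$ (equivalently the reflection formula $\Gamma(\theta)\Gamma(1-\theta)=\pi/\sin(\pi\theta)$) is needed throughout, but no tool beyond these is required.
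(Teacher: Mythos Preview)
Your proposal is correct and follows essentially the same architecture as the paper: reduce to the untempered case via $z(k)=w(k)x(k)$, handle the Caputo limits directly from the finite-sum representation ${}_a^{\rm C}\nabla_k^{\alpha}z(k)=\sum_{j=a+1}^{k}\frac{(k-j+1)^{\overline{n-\alpha-1}}}{\Gamma(n-\alpha)}\nabla^{n}z(j)$, and then transfer to the Riemann--Liouville case through the decomposition (\ref{Eq3.2}), analysing the coefficients $c_i(\alpha)=\frac{(k-a)^{\overline{i-\alpha}}}{\Gamma(i-\alpha+1)}$ at the endpoints to obtain both the limits and the sharp domains $\mathbb{N}_{a+n+1}$ and $\mathbb{N}_{a+n}$.

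The one genuine difference is in your treatment of (\ref{Eq3.20}). The paper first applies summation by parts to push the kernel index up by one (obtaining a sum in $\nabla^{n+1}z$ with kernel $\frac{(k-j+1)^{\overline{n-\alpha}}}{\Gamma(n-\alpha+1)}$ plus a boundary term), so that as $\alpha\to n$ every coefficient tends to $1$ and the sum telescopes back to $\nabla^{n}z(k)$. Your argument is more direct: in the original sum the $j=k$ coefficient is identically $1$, while for $j<k$ the numerator $\Gamma(k-j+n-\alpha)$ stays finite and the pole of $\Gamma(n-\alpha)$ forces the term to $0$. Both are valid; yours avoids the summation-by-parts step at no cost. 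For the remaining three limits and the domain restrictions your reasoning coincides with the paper's.
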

\begin{proof}
Assume $z(k) :=w(k)x(k)$. The formula of summation by parts gives
\begin{equation}\label{Eq3.24}
{\textstyle \begin{array}{rl}
{}_a^{\rm C}\nabla _k^{\alpha ,w(k) }x(k) =&\hspace{-6pt}w^{-1}(k){}_a^{\rm C}\nabla _k^\alpha z(k)\\
 =&\hspace{-6pt}w^{-1}(k)  \sum\nolimits_{j = a + 1}^k {\frac{{ ( {k - j + 1} )\overline {^{n - \alpha  - 1}} }}{{\Gamma  ( {n - \alpha }  )}}{\nabla ^n}z(j)} \\
 =&\hspace{-6pt}w^{-1}(k) \sum\nolimits_{j = a + 1}^k {\frac{{ ( {k - j + 1} )\overline {^{n - \alpha }} }}{{\Gamma  ( {n - \alpha  + 1}  )}}{\nabla ^{n + 1}}z(j)}+ w^{-1}(k)\frac{{ ( {k - a}  )\overline {^{n - \alpha }} }}{{\Gamma  ( {n - \alpha  + 1}  )}}{{ [ {{\nabla ^n}z(k)} ]}_{k = a}}.
\end{array}}
\end{equation}

Note that $\frac{{ ( {k - j + 1}  )\overline {^{n-\alpha-1}} }}{{\Gamma(n-\alpha)}}$ is continuous regarding to $\alpha$ for any $j\in\mathbb{N}_{a+1}^{k}$ $k\in\mathbb{N}_{a+1}$, and therefore ${}_a^{\rm C}\nabla _k^{\alpha,w(k)}x(k)$ is also continuous. Due to the continuity of ${\frac{{ ( {k - a}  )\overline {^{i - \alpha }} }}{{\Gamma  ( {i - \alpha  + 1}  )}}}$ with respect to $\alpha$, $i=0,1,\cdots,n-1$, $\forall k\in\mathbb{N}_{a+n+1}$ and ${}_a^{\rm R}\nabla _k^{\alpha,w(k)} x(k) = {}_a^{\rm C}\nabla _k^{\alpha,w(k)} x(k)+ \sum\nolimits_{i = 0}^{n - 1} {\frac{{ ( {k - a}  )\overline {^{i - \alpha }} }}{{\Gamma  ( {i - \alpha  + 1}  )}}}\frac{w(a)}{w(k)} { [ {{\nabla ^{i,w(k)}}x(k)}  ]_{k = a}}$, ${}_a^{\rm C}\nabla _k^{\alpha,w(k)}x(k)$ is continuous $k\in\mathbb{N}_{a+n+1}$.

Taking limit for $\alpha\to n$ and using $\Gamma(-z)=\infty$, $z\in\mathbb{N}$, one has
\begin{equation}\label{Eq3.25}
{\textstyle \begin{array}{rl}
\mathop {\lim }\limits_{\alpha  \to n} {}_a^{\rm C}\nabla _k^{\alpha,w(k)} x(k)=&\hspace{-6pt}\mathop {\lim }\limits_{\alpha  \to n}w^{-1}(k) \sum\nolimits_{j = a + 1}^k {\frac{{ ( {k - j + 1} )\overline {^{n - \alpha }} }}{{\Gamma  ( {n - \alpha  + 1}  )}}{\nabla ^{n + 1}}z(j)}\\
 &\hspace{-6pt}+\mathop {\lim }\limits_{\alpha  \to n}w^{-1}(k)\frac{{ ( {k - a}  )\overline {^{n - \alpha }} }}{{\Gamma  ( {n - \alpha  + 1}  )}}{{ [ {{\nabla ^n}z(k)} ]}_{k = a}}\\
=&\hspace{-6pt}w^{-1}(k) \sum\nolimits_{j = a + 1}^k \mathop {\lim }\limits_{\alpha  \to n}{\frac{{ ( {k - j + 1} )\overline {^{n - \alpha }} }}{{\Gamma  ( {n - \alpha  + 1}  )}}{\nabla ^{n + 1}}z(j)}\\
 &\hspace{-6pt}+w^{-1}(k)\mathop {\lim }\limits_{\alpha  \to n}\frac{{ ( {k - a}  )\overline {^{n - \alpha }} }}{{\Gamma  ( {n - \alpha  + 1}  )}}{{ [ {{\nabla ^n}z(k)} ]}_{k = a}}\\
 =&\hspace{-6pt}w^{-1}(k)\sum\nolimits_{j = a + 1}^k {{\nabla ^{n + 1}}z(j)}+w^{-1}(k){ [ {{\nabla ^n}z(k)}  ]_{k = a}}\\
=&\hspace{-6pt}w^{-1}(k) {\nabla ^n}z(k)-w^{-1}(k) [{\nabla ^n}z(k)]_{k=a}+w^{-1}(k){ [ {{\nabla ^n}z(k)}  ]_{k = a}}\\
 =&\hspace{-6pt} w^{-1}(k) {\nabla ^n}z(k)={\nabla ^{n,w(k)}}x(k).
\end{array}}
\end{equation}

For the case of $\alpha\to n-1$, it follows
\begin{equation}\label{Eq3.26}
{\textstyle \begin{array}{rl}
\mathop {\lim }\limits_{\alpha  \to n - 1} {}_a^{\rm C}\nabla _k^{\alpha,w(k)} x(k) =&\hspace{-6pt} \mathop {\lim }\limits_{\alpha  \to n - 1} \{ {w^{-1}(k)\sum\nolimits_{j = a + 1}^k {\frac{{ ( {k - j + 1} )\overline {^{n - \alpha  - 1}} }}{{\Gamma  ( {n - \alpha }  )}}{\nabla ^n}z(j)} } \}\\
 =&\hspace{-6pt} w^{-1}(k)\sum\nolimits_{j = a + 1}^k {{\nabla ^n}z(j)} \\
 =&\hspace{-6pt} w^{-1}(k){\nabla ^{n - 1}}z(k) - w^{-1}(k){ [ {{\nabla ^{n - 1}}z(k)}  ]_{k = a}}\\
 =&\hspace{-6pt}{\nabla ^{n - 1,w(k)}}x(k) - \frac{w(a)}{w(k)}{ [ {{\nabla ^{n - 1,w(k)}}x(k)}  ]_{k = a}},
\end{array}}
\end{equation}
which leads to (\ref{Eq3.21}) smoothly.

For any $k\in\mathbb{N}_{a+n+1}$, the following limit can be obtained
\begin{equation}\label{Eq3.27}
{\textstyle \begin{array}{l}
\mathop {\lim }\limits_{\alpha  \to n} \sum\nolimits_{i = 0}^{n - 1} {\frac{{ ( {k - a}  )\overline {^{i - \alpha }} }}{{\Gamma  ( {i - \alpha  + 1}  )}}}\frac{w(a)}{w(k)} { [ {{\nabla ^{i,w(k)}}x(k)}  ]_{k = a}}=\sum\nolimits_{i = 0}^{n - 1} \frac{\Gamma( {k - a+i-n}  )}{\Gamma  ( {i - n  + 1}  )\Gamma  ( {k - a  )}}\frac{w(a)}{w(k)} { [ {{\nabla ^{i,w(k)}}x(k)}  ]_{k = a}} =0.
\end{array}}
\end{equation}

Moreover, by using the relationship in (\ref{Eq3.2}), (\ref{Eq3.25}) and (\ref{Eq3.27}), one has the following equation for $k\in\mathbb{N}_{a+n+1}$
\begin{equation}\label{Eq3.28}
{\textstyle \begin{array}{l}
\mathop {\lim }\limits_{\alpha  \to n} {}_a^{\rm R}\nabla _k^{\alpha,w(k)} x(k) =\mathop {\lim }\limits_{\alpha  \to n} {}_a^{\rm C}\nabla _k^{\alpha,w(k)} x(k)+\mathop {\lim }\limits_{\alpha  \to n} \sum\nolimits_{i = 0}^{n - 1} {\frac{{ ( {k - a}  )\overline {^{i - \alpha }} }}{{\Gamma  ( {i - \alpha  + 1}  )}}}\frac{w(a)}{w(k)} { [ {{\nabla ^{i,w(k)}}x(k)}  ]_{k = a}} ={\nabla ^{n,w(k)}}x(k).
\end{array}}
\end{equation}

Similarly, for any $k\in\mathbb{N}_{a+n}$, it follows
\begin{equation}\label{Eq3.29}
{\textstyle \begin{array}{rl}
\mathop {\lim }\limits_{\alpha  \to n - 1} {}_a^{\rm R}\nabla _k^{\alpha,w(k)} x(k) =&\hspace{-6pt} \mathop {\lim }\limits_{\alpha  \to n - 1} w^{-1}(k)\sum\nolimits_{j = a + 1}^k {\frac{{ ( {k - j + 1} )\overline {^{n - \alpha  - 1}} }}{{\Gamma  ( {n - \alpha }  )}}{\nabla ^n}z(j)} \\
  &\hspace{-6pt}+\mathop {\lim }\limits_{\alpha  \to n - 1}w^{-1}(k)\sum\nolimits_{i = 0}^{n - 1} {\frac{{ ( {k - a}  )\overline {^{i - \alpha }} }}{{\Gamma  ( {i - \alpha  + 1}  )}}{{ [ {{\nabla ^i}z(k)}  ]}_{k = a}}} \\
 =&\hspace{-6pt} w^{-1}(k)\sum\nolimits_{j = a + 1}^k {{\nabla ^n}z(j)}  + w^{-1}(k){ [ {{\nabla ^{n - 1}}z(k)}  ]_{k = a}}\\
 =&\hspace{-6pt} w^{-1}(k){\nabla ^{n - 1}}z(k)\\
 =&\hspace{-6pt}{\nabla ^{n - 1,w(k)}}x(k),
\end{array}}
\end{equation}
Till now, (\ref{Eq3.23}) has been proved.
\end{proof}

In Theorem \ref{Theorem3.4}, the limit is the unilateral limit. Similar result for the continuous time case \cite[page 781]{Li:2007AMC} or even the classical discrete time case \cite[Theorem 3.63]{Goodrich:2015Book} have been studied. \textsf{However, the methods in the existing results do not work here. Notably, the range of suitable $k$ is provided, i.e., $k\in\mathbb{N}_{a+1}$, $k\in\mathbb{N}_{a+n+1}$, $k\in\mathbb{N}_{a+n}$, which coincides with (\ref{Eq3.16}) and actually  refines the existing results.} Besides, the range of $\alpha$ for ${ }_{a}^{\mathrm{R}} \nabla_{k}^{\alpha, w(k)} x(k)$ can be extended as $[n-1, n]$ and the range of $\alpha$ for ${ }_{a}^{\rm C} \nabla_{k}^{\alpha, w(k)} x(k)$ should be $(n-1, n]$.
\begin{theorem}\label{Theorem3.5}
For $x,x_i : \mathbb{N}_{a+1}^b \to \mathbb{R}$, $i\in\mathbb{Z}_+$, if $\{x_i\}$ converges uniformly to $x$, then for any $\alpha>0$, $k,b\in\mathbb{N}_{a+1}$, $a\in\mathbb{R}$, $w: \mathbb{N}_{a+1} \to \mathbb{R} \backslash\{0\}$, one has
\begin{equation}\label{Eq3.30}
{\textstyle \mathop {\lim }\limits_{i  \to +\infty} {}_a^{\rm G}\nabla _k^{-\alpha,w(k)}x_i(k) = {}_a^{\rm G}\nabla _k^{-\alpha,w(k)}x(k)}.
\end{equation}
\end{theorem}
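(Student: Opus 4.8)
The plan is to exploit the elementary but decisive fact that, for each fixed $k\in\mathbb{N}_{a+1}^{b}$, the tempered fractional sum is a \emph{finite} linear combination of the sampled values $x(a+1),\dots,x(k)$ whose coefficients do not depend on the index $i$. Indeed, combining the Gr\"unwald--Letnikov expansion (\ref{Eq2.3}) with the tempered definition (\ref{Eq2.9}) gives
\[
{}_{a}^{\mathrm{G}}\nabla_{k}^{-\alpha,w(k)}x(k)=\sum_{j=a+1}^{k}c_{k,j}\,x(j),\qquad c_{k,j}:=\frac{(k-j+1)^{\overline{\alpha-1}}}{\Gamma(\alpha)}\frac{w(j)}{w(k)},
\]
and the identical formula holds with $x$ replaced by $x_i$. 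Since $\alpha>0$ forces $\Gamma(\alpha)$ to be finite and nonzero, since $w(k)\neq 0$ keeps $w^{-1}(k)$ finite, and since $k-j+1\ge 1$ throughout the range, each coefficient $c_{k,j}$ is a fixed finite real number and the sum has exactly $k-a$ terms.

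First I would subtract the two expansions and estimate term by term. By linearity and the triangle inequality,
\[
\Big|\,{}_{a}^{\mathrm{G}}\nabla_{k}^{-\alpha,w(k)}x_i(k)-{}_{a}^{\mathrm{G}}\nabla_{k}^{-\alpha,w(k)}x(k)\,\Big|
\le\sum_{j=a+1}^{k}|c_{k,j}|\,\big|x_i(j)-x(j)\big|.
\]
Setting $\varepsilon_i:=\sup_{a+1\le j\le b}|x_i(j)-x(j)|$, the hypothesis that $\{x_i\}$ converges uniformly to $x$ on $\mathbb{N}_{a+1}^{b}$ is precisely the statement $\varepsilon_i\to 0$ as $i\to+\infty$. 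Factoring $\varepsilon_i$ out of the finite sum bounds the right-hand side by $C_k\varepsilon_i$, where $C_k:=\sum_{j=a+1}^{k}|c_{k,j}|$ is a fixed finite constant independent of $i$. Hence the left-hand side tends to $0$, which is exactly (\ref{Eq3.30}).

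I expect no genuine obstacle in this argument; its entire content is the finiteness of the summation range, guaranteed by the bounded domain $\mathbb{N}_{a+1}^{b}$ together with $k\le b$, and it is this finiteness that licenses interchanging $\lim_{i}$ with the sum without invoking any dominated-convergence machinery. The only points needing a line of verification are that $\Gamma(\alpha)$ and $w^{-1}(k)$ remain finite, as noted above. It is worth remarking that uniform convergence is in fact more than is strictly necessary for a single fixed $k$ (pointwise convergence would already suffice there); the extra strength pays off if one wants uniformity in $k$, since with $M:=\max_{a+1\le k\le b}C_k<\infty$ one obtains $\sup_{k}\big|{}_{a}^{\mathrm{G}}\nabla_{k}^{-\alpha,w(k)}x_i(k)-{}_{a}^{\mathrm{G}}\nabla_{k}^{-\alpha,w(k)}x(k)\big|\le M\varepsilon_i\to 0$, so the tempered fractional sums converge uniformly in $k$ as well.
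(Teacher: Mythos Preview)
Your proof is correct and follows essentially the same approach as the paper: write the tempered sum as a finite linear combination, subtract, apply the triangle inequality, and bound by the uniform error times the (finite) sum of coefficient magnitudes. If anything, your use of $|c_{k,j}|$ is slightly more careful than the paper, which tacitly assumes $w(k)>0$ when pulling the coefficients outside the absolute value (a restriction it acknowledges in the remark following the theorem).
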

\begin{proof}
Due to the given condition on uniform convergence, one obtains that for any $\varepsilon>0$, there exists $N\in\mathbb{Z}_+$, such that $|x_i(k)-x(k)|<\varepsilon$, for any $i>N$, $k\in\mathbb{N}_{a+1}^b$. Assuming $\kappa : = \mathop {\max }\limits_{k} {}_a^{\rm G}\nabla _k^{ - \alpha ,w(k)}1$ and $\kappa:=\varepsilon \kappa $, then for any $i>N$, one has
\begin{equation}\label{Eq3.31}
{\textstyle \begin{array}{rl}
|{}_a^{\rm{G}}\nabla _k^{ - \alpha ,w(k)}{x_i}(k) - {}_a^{\rm{G}}\nabla _k^{ - \alpha ,w(k)}x(k)| =&\hspace{-6pt} |{w^{ - 1}}(k)\sum\nolimits_{j = a + 1}^k {\frac{{(k - j + 1)\overline {^{\alpha  - 1}} }}{{\Gamma \left( \alpha  \right)}}w(j)[{x_i}(j) - x(j)]} |\\
 \le&\hspace{-6pt} {w^{ - 1}}(k)\sum\nolimits_{j = a + 1}^k {\frac{{(k - j + 1)\overline {^{\alpha  - 1}} }}{{\Gamma \left( \alpha  \right)}}w(j)|{x_i}(j) - x(j)|} \\
 \le&\hspace{-6pt} \varepsilon {w^{ - 1}}(k)\sum\nolimits_{j = a + 1}^k {\frac{{(k - j + 1)\overline {^{\alpha  - 1}} }}{{\Gamma \left( \alpha  \right)}}w(j)} \\
 \le&\hspace{-6pt} \varepsilon \kappa  =\epsilon,
\end{array}}
\end{equation}
which implies (\ref{Eq3.30}).
\end{proof}

Theorem \ref{Theorem3.5} is inspired by \cite[Theorem 4]{Almeida:2019RMJM}, which can be used to find the limit of ${ }_{a}^{\rm G} \nabla_{k}^{-\alpha, w(k)} x_{i}(k)$ as $i \to+\infty$ and estimate the value of ${ }_{a}^{\rm G} \nabla_{k}^{-\alpha, w(k)} x(k)$. In the two theorems, $w(k)$ is assumed to be finite positive and it is effective to extend such a condition.

\subsection{Nabla Taylor formula}
In this part, some properties regarding the nabla Taylor formula of nabla tempered fractional calculus will be developed.

\begin{lemma}\label{Lemma3.1}\cite[Theorem 3.48]{Goodrich:2015Book}
For any $x:\mathbb{N}_{a-K-1}\to\mathbb{R}$, $a\in\mathbb{R}$, $K\in\mathbb{N}$, $k\in\mathbb{N}_{a}^b$, one has
\begin{equation}\label{Eq3.35}
{\textstyle
x(k)   = \sum\nolimits_{i = 0}^{K} {\frac{ ( {k - a}  )\overline {^i} }{{i!}}[{{\nabla ^i}x(k)}]_{k=a}}  + \sum\nolimits_{j = a + 1}^k {{}\frac{{ ( {k - j + 1}  )\overline {^{K}} }}{{{K}!}}{\nabla ^{K+1}}x(j)},}
\end{equation}
which can be regarded as the nabla Taylor formula of $x(k)$ expanded at the initial instant $k=a$.
\end{lemma}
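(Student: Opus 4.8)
The plan is to prove the formula by induction on $K$, using the nabla summation-by-parts identity quoted earlier, namely $\sum_{j=a+1}^{k}\nabla f(j)g(j)=\left.f(j)g(j)\right|_{j=a}^{j=k}-\sum_{j=a+1}^{k}f(j-1)\nabla g(j)$. For the base case $K=0$, the claimed identity collapses to $x(k)=x(a)+\sum_{j=a+1}^{k}\nabla x(j)$, since $(k-a)^{\overline{0}}=(k-j+1)^{\overline{0}}=1$ and $0!=1$; this is just the telescoping relation $\sum_{j=a+1}^{k}[x(j)-x(j-1)]=x(k)-x(a)$, i.e.\ the fundamental theorem of nabla calculus.

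For the inductive step I would assume the formula at level $K-1$, so that $x(k)$ equals the polynomial part $\sum_{i=0}^{K-1}\frac{(k-a)^{\overline{i}}}{i!}[\nabla^{i}x(k)]_{k=a}$ plus the remainder $R_{K-1}:=\sum_{j=a+1}^{k}\frac{(k-j+1)^{\overline{K-1}}}{(K-1)!}\nabla^{K}x(j)$, and then rewrite $R_{K-1}$. The crucial observation is that the summand's coefficient is itself a nabla difference in $j$: one has $\nabla_{j}\bigl[-\frac{(k-j)^{\overline{K}}}{K!}\bigr]=\frac{(k-j+1)^{\overline{K-1}}}{(K-1)!}$, which follows from the elementary rising-factorial identity $(m+1)^{\overline{K}}-m^{\overline{K}}=K\,(m+1)^{\overline{K-1}}$ taken at $m=k-j$. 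Choosing $f(j)=-\frac{(k-j)^{\overline{K}}}{K!}$ and $g(j)=\nabla^{K}x(j)$, so that $\nabla f(j)$ reproduces the coefficient exactly and $\nabla g(j)=\nabla^{K+1}x(j)$, summation by parts converts $R_{K-1}$ into a boundary contribution plus a new remainder of the desired shape.

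It then remains to evaluate the boundary term $\left.f(j)g(j)\right|_{j=a}^{j=k}$. Because $0^{\overline{K}}=\frac{\Gamma(K)}{\Gamma(0)}=0$ for $K\ge 1$, the upper endpoint $f(k)g(k)$ vanishes, while the lower endpoint yields $-f(a)g(a)=\frac{(k-a)^{\overline{K}}}{K!}[\nabla^{K}x(k)]_{k=a}$, which is precisely the missing $i=K$ term of the polynomial part. The leftover sum $-\sum_{j=a+1}^{k}f(j-1)\nabla g(j)=\sum_{j=a+1}^{k}\frac{(k-j+1)^{\overline{K}}}{K!}\nabla^{K+1}x(j)$ is exactly the new remainder $R_{K}$, so substituting back into the level-$(K-1)$ expansion produces the level-$K$ formula and closes the induction.

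The main obstacle, and the only genuinely non-routine point, is guessing and verifying the correct antidifference $f(j)=-\frac{(k-j)^{\overline{K}}}{K!}$: the naive candidate $-\frac{(k-j+1)^{\overline{K}}}{K!}$ returns the shifted coefficient $(k-j+2)^{\overline{K-1}}$ instead of $(k-j+1)^{\overline{K-1}}$, so the argument shift must be tracked carefully. Once this identity is secured, the vanishing of $0^{\overline{K}}$ and the algebra of the boundary evaluation are entirely mechanical.
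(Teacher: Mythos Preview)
Your induction argument is correct: the base case is the telescoping identity, and the inductive step via summation by parts with $f(j)=-\frac{(k-j)^{\overline{K}}}{K!}$, $g(j)=\nabla^{K}x(j)$ works exactly as you describe, including the boundary evaluation using $0^{\overline{K}}=0$ for $K\ge 1$.

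There is nothing to compare against, however: the paper does not supply its own proof of this lemma but simply cites it as \cite[Theorem 3.48]{Goodrich:2015Book}. Your argument is a standard and clean way to establish the result; the same summation-by-parts maneuver (with the same antidifference identity $\nabla_j\frac{(k-j)^{\overline{K}}}{K!}=-\frac{(k-j+1)^{\overline{K-1}}}{(K-1)!}$) is in fact used elsewhere in the paper, e.g.\ in the derivations of (\ref{Eq3.5}) and (\ref{Eq3.18}), so your approach is entirely in keeping with the techniques the authors rely on.
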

\begin{theorem}\label{Theorem3.7}
For any $x:\mathbb{N}_{a-K-1}\to\mathbb{R}$, $a\in\mathbb{R}$, $K\in\mathbb{N}$, $n\in\mathbb{Z}_+$, $K>n$, $w: \mathbb{N}_{a+1} \to \mathbb{R} \backslash\{0\}$, $k\in\mathbb{N}_{a+1}$, one has
\begin{equation}\label{Eq3.36}
\textstyle{ \begin{array}{rl}
 { {\nabla}^{n,w(k)} x(k) } =&\hspace{-6pt} \sum\nolimits_{i = n}^{K} {\frac{{ ( {k - a } )}^{\overline {i-n} }}{{  ( i-n  )! }}\frac{w(a)}{w(k)}{[{{\nabla ^{i,w(k)}}x ( {k }  )}]_{k=a}}} + \sum\nolimits_{j = a+1}^k {\frac{{{{ ( {k - j + 1}  )}^{\overline {K-n} }}}}{ \Gamma{ ( K-n )! } }}\frac{w(j)}{w(k)}{\nabla ^{K+1,w(j)}}x(j) .
  \end{array}}
\end{equation}
For any $x:\mathbb{N}_{a-K-1}\to\mathbb{R}$, $a\in\mathbb{R}$, $K\in\mathbb{N}$, $\alpha\in\mathbb{R}\backslash\mathbb{Z}_+$, $w: \mathbb{N}_{a+1} \to \mathbb{R} \backslash\{0\}$, $k\in\mathbb{N}_{a+1}$, one has
\begin{equation}\label{Eq3.37}
\textstyle{ \begin{array}{rl}
 { {}_{a}^{\rm G}{\nabla}_{k}^{\alpha,w(k)} x(k) } =&\hspace{-6pt} \sum\nolimits_{i = 0}^{K} {\frac{{ ( {k - a } )}^{\overline {i-\alpha} }}{{\Gamma  ( i-\alpha+1  ) }}\frac{w(a)}{w(k)}{[{{\nabla ^{i,w(k)}}x ( {k }  )}]_{k=a}}} + \sum\nolimits_{j = a+1}^k {\frac{{{{ ( {k - j + 1}  )}^{\overline {K-\alpha} }}}}{ \Gamma{ ( K-\alpha+1  ) } }}\frac{w(j)}{w(k)}{\nabla ^{K+1,w(j)}}x(j).
  \end{array}}
\end{equation}
For any $x:\mathbb{N}_{a-K-1}\to\mathbb{R}$, $a\in\mathbb{R}$, $K\in\mathbb{N}$, $\alpha\in(n-1,n)$, $n\in\mathbb{Z}_+$, $w: \mathbb{N}_{a+1} \to \mathbb{R} \backslash\{0\}$, $k\in\mathbb{N}_{a+1}$, one has
\begin{equation}\label{Eq3.38}
\textstyle{ \begin{array}{rl}
 { {}_{a}^{\rm R}{\nabla}_{k}^{\alpha,w(k)} x(k) } =&\hspace{-6pt} \sum\nolimits_{i = 0}^{K} {\frac{{ ( {k - a } )}^{\overline {i-\alpha} }}{{\Gamma  ( i-\alpha+1  ) }}\frac{w(a)}{w(k)}{[{{\nabla ^{i,w(k)}}x ( {k }  )}]_{k=a}}} + \sum\nolimits_{j = a+1}^k {\frac{{{{ ( {k - j + 1}  )}^{\overline {K-\alpha} }}}}{ \Gamma{ ( K-\alpha+1  ) } }}\frac{w(j)}{w(k)}{\nabla ^{K+1,w(j)}}x(j).
  \end{array}}
\end{equation}
For any $x:\mathbb{N}_{a-K-1}\to\mathbb{R}$, $a\in\mathbb{R}$, $K\in\mathbb{N}$, $\alpha\in(n-1,n)$, $n\in\mathbb{Z}_+$, $K>n$, $w: \mathbb{N}_{a+1} \to \mathbb{R} \backslash\{0\}$, $k\in\mathbb{N}_{a+1}$, one has
\begin{equation}\label{Eq3.39}
\textstyle{ \begin{array}{rl}
 { {}_{a}^{\rm C}{\nabla}_{k}^{\alpha,w(k)} x(k) } =&\hspace{-6pt} \sum\nolimits_{i = n}^{K} {\frac{{ ( {k - a } )}^{\overline {i-\alpha} }}{{\Gamma  ( i-\alpha+1  ) }}\frac{w(a)}{w(k)}{[{{\nabla ^{i,w(k)}}x ( {k }  )}]_{k=a}}} + \sum\nolimits_{j = a+1}^k {\frac{{{{ ( {k - j + 1}  )}^{\overline {K-\alpha} }}}}{ \Gamma{ ( K-\alpha+1  ) } }}\frac{w(j)}{w(k)}{\nabla ^{K+1,w(j)}}x(j).
  \end{array}}
\end{equation}
\end{theorem}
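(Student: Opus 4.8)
The plan is to prove all four identities by a single computation on the auxiliary function $z(k):=w(k)x(k)$, using that each tempered operator equals $w^{-1}(k)$ times its untempered version applied to $z$. First I would apply Lemma \ref{Lemma3.1} to $z$, writing
\[
z(k)=\sum\nolimits_{i=0}^{K}\tfrac{(k-a)^{\overline{i}}}{i!}[\nabla^i z(k)]_{k=a}+R_K(k),\quad R_K(k):=\sum\nolimits_{j=a+1}^{k}\tfrac{(k-j+1)^{\overline{K}}}{K!}\nabla^{K+1}z(j),
\]
and then observe, comparing with the sum representation (\ref{Eq2.3}) at order $-(K+1)$, that the remainder is itself a Gr\"{u}nwald--Letnikov sum, $R_K(k)={}_a^{\rm G}\nabla_k^{-(K+1)}[\nabla^{K+1}z(k)]$. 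This identification is what allows the fractional operators to pass through the remainder cleanly.

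The two computational engines I would prepare are the power rule ${}_a^{\rm G}\nabla_k^{\alpha}\tfrac{(k-a)^{\overline{i}}}{i!}=\tfrac{(k-a)^{\overline{i-\alpha}}}{\Gamma(i-\alpha+1)}$ (with its integer analogue $\nabla^{n}\tfrac{(k-a)^{\overline{i}}}{i!}=\tfrac{(k-a)^{\overline{i-n}}}{(i-n)!}$ vanishing for $i<n$), and the index law ${}_a^{\rm G}\nabla_k^{\alpha}\,{}_a^{\rm G}\nabla_k^{-(K+1)}={}_a^{\rm G}\nabla_k^{\alpha-(K+1)}$, valid since the inner order is a nonpositive summation order. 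To obtain (\ref{Eq3.37}) I would apply ${}_a^{\rm G}\nabla_k^{\alpha}$ termwise: the power rule converts each $\tfrac{(k-a)^{\overline{i}}}{i!}$ into $\tfrac{(k-a)^{\overline{i-\alpha}}}{\Gamma(i-\alpha+1)}$, while the index law gives ${}_a^{\rm G}\nabla_k^{\alpha}R_K(k)={}_a^{\rm G}\nabla_k^{\alpha-(K+1)}[\nabla^{K+1}z(k)]$, whose kernel is read off from (\ref{Eq2.3}) as $\tfrac{(k-j+1)^{\overline{K-\alpha}}}{\Gamma(K-\alpha+1)}$. Multiplying by $w^{-1}(k)$ and substituting $[\nabla^i z(k)]_{k=a}=w(a)[\nabla^{i,w(k)}x(k)]_{k=a}$ and $\nabla^{K+1}z(j)=w(j)\nabla^{K+1,w(j)}x(j)$ produces exactly the weights $\tfrac{w(a)}{w(k)}$ and $\tfrac{w(j)}{w(k)}$, giving (\ref{Eq3.37}). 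Identity (\ref{Eq3.36}) follows by the same steps with $\nabla^{n}$ in place of ${}_a^{\rm G}\nabla_k^{\alpha}$: the integer power rule kills the terms $i<n$, so the sum starts at $i=n$, and iterating the Leibniz rule $\nabla\sum\nolimits_{j=a+1}^{k}f(j,k)=\sum\nolimits_{j=a+1}^{k}\nabla f(j,k)+f(k,k-1)$ on $R_K$ leaves $\sum\nolimits_{j=a+1}^{k}\tfrac{(k-j+1)^{\overline{K-n}}}{(K-n)!}\nabla^{K+1}z(j)$, every boundary term dying because $0^{\overline{m}}=0$ for $m\ge 1$, which is guaranteed by $K>n$.

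The remaining two identities need no new computation. For (\ref{Eq3.38}) I would invoke the equivalence (\ref{Eq3.1}), namely ${}_a^{\rm R}\nabla_k^{\alpha,w(k)}x(k)={}_a^{\rm G}\nabla_k^{\alpha,w(k)}x(k)$, so that (\ref{Eq3.38}) is literally (\ref{Eq3.37}) restricted to $\alpha\in(n-1,n)$. For (\ref{Eq3.39}) I would use (\ref{Eq3.2}) in the form ${}_a^{\rm C}\nabla_k^{\alpha,w(k)}x(k)={}_a^{\rm R}\nabla_k^{\alpha,w(k)}x(k)-\sum\nolimits_{i=0}^{n-1}\tfrac{(k-a)^{\overline{i-\alpha}}}{\Gamma(i-\alpha+1)}\tfrac{w(a)}{w(k)}[\nabla^{i,w(k)}x(k)]_{k=a}$; substituting (\ref{Eq3.38}) cancels precisely the summands $i=0,\dots,n-1$, raising the lower limit of the polynomial sum to $i=n$ while leaving the remainder untouched, which is (\ref{Eq3.39}).

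I expect the main obstacle to be the rigorous justification of the index law ${}_a^{\rm G}\nabla_k^{\alpha}\,{}_a^{\rm G}\nabla_k^{-(K+1)}={}_a^{\rm G}\nabla_k^{\alpha-(K+1)}$ applied to the remainder, together with the power rule for rising-factorial monomials; both are composition/semigroup facts for the Gr\"{u}nwald--Letnikov operator, available here because the inner order $-(K+1)$ is a nonpositive summation order, yet they carry essentially the entire analytic weight of the argument, after which only the bookkeeping of the tempering factors remains. A secondary point demanding care is verifying that every boundary term generated while repeatedly differencing the convolution remainder vanishes, which rests on $0^{\overline{m}}=0$ for positive integers $m$ and hence on the standing hypothesis $K>n$ in (\ref{Eq3.36}).
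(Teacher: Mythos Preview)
Your proposal is correct and follows essentially the same strategy as the paper: set $z(k)=w(k)x(k)$, apply Lemma~\ref{Lemma3.1} to $z$, recognize the remainder as ${}_a^{\rm G}\nabla_k^{-(K+1)}[\nabla^{K+1}z]$, and then hit everything with the relevant operator, using the power rule on the polynomial part and the index law on the remainder. The paper's computations for (\ref{Eq3.36}) and (\ref{Eq3.37}) are line-for-line what you describe.

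The only difference is in how (\ref{Eq3.38}) and (\ref{Eq3.39}) are obtained. You derive them as corollaries of (\ref{Eq3.37}) via the already-proved relations (\ref{Eq3.1}) and (\ref{Eq3.2}) from Theorem~\ref{Theorem3.1}, whereas the paper re-derives each one directly from the defining formulas ${}_a^{\rm R}\nabla_k^{\alpha}=\nabla^n\,{}_a^{\rm G}\nabla_k^{\alpha-n}$ and ${}_a^{\rm C}\nabla_k^{\alpha}={}_a^{\rm G}\nabla_k^{\alpha-n}\,\nabla^n$, repeating the power-rule and index-law steps with shifted exponents. Your route is shorter and avoids duplicated computation; the paper's route is more self-contained. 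Both are valid, and your identification of the index law and the vanishing boundary terms (via $0^{\overline{m}}=0$ for $m\ge 1$, guaranteed by $K>n$) as the two load-bearing facts is exactly right.
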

\begin{proof}
Letting $z(k):=w(k)x(k)$, then one has
\begin{equation}\label{Eq3.40}
{\textstyle
z(k)=T_K(k)+R_K(k),}
\end{equation}
where $T_K(k):=\sum\nolimits_{i = 0}^{K} {\frac{ ( {k - a}  )\overline {^i} }{{i!}}[{{\nabla ^i}z(k)}]_{k=a}}$ and $R_K(k):=\sum\nolimits_{j = a + 1}^k {{}\frac{{ ( {k - j + 1}  )\overline {^{K}} }}{{{K}!}}{\nabla ^{K+1}}z(j)}$
$= {}_{a}^{\rm G}{\nabla}_{k}^{-(K+1)}{\nabla^{K+1} z(k) }$.

Along this way, it follows
\begin{equation}\label{Eq3.41}
{\textstyle\begin{array}{rl}
{\nabla ^n}{T_K}(k)  =&\hspace{-6pt} \sum\nolimits_{i = 0}^K {{\nabla ^n}\frac{{\left( {k - a} \right)\overline {^i} }}{{i!}}[{\nabla ^i}z\left( k \right)}]_{k = a}\\
 =&\hspace{-6pt} \sum\nolimits_{i = n}^K {\frac{{{{\left( {k - a} \right)}^{\overline {i - n} }}}}{{\left( {i - n} \right)!}}[{\nabla ^i}z\left( k \right)]_{k = a}},
\end{array}}
\end{equation}

\begin{equation}\label{Eq3.42}
{\textstyle\begin{array}{rl}
{\nabla ^n}{R_K}(k)  =&\hspace{-6pt} {\nabla ^{n - 1}}\nabla \sum\nolimits_{j = a + 1}^k {\frac{{\left( {k - j + 1} \right)\overline {^K} }}{{K!}}{\nabla ^{K + 1}}z\left( j \right)} \\
 =&\hspace{-6pt} {\nabla ^{n - 1}}\sum\nolimits_{j = a + 1}^k {\nabla \frac{{\left( {k - j + 1} \right)\overline {^K} }}{{K!}}{\nabla ^{K + 1}}z\left( j \right)} + {\nabla ^{n - 1}}{\big[ {\frac{{\left( {k - j} \right)\overline {^K} }}{{K!}}{\nabla ^{K + 1}}z\left( j \right)} \big]_{j = k}}\\
 =&\hspace{-6pt} {\nabla ^{n - 1}}\sum\nolimits_{j = a + 1}^k {\frac{{\left( {k - j + 1} \right)\overline {^{K - 1}} }}{{(K - 1)!}}{\nabla ^{K + 1}}z\left( j \right)}  +\frac{{0\overline {^K} }}{{K!}} {\nabla ^{K + n}}z\left( k \right)\\
 =&\hspace{-6pt} \sum\nolimits_{j = a + 1}^k {\frac{{\left( {k - j + 1} \right)\overline {^{K - n}} }}{{(K - n)!}}{\nabla ^{K + 1}}z\left( j \right)}  + \sum\nolimits_{j = 1}^n {\frac{{0\overline {^{K+n-j}} }}{{(K+n-j)!}}{\nabla ^{K + j}}z\left( k \right)}\\
 =&\hspace{-6pt} \sum\nolimits_{j = a + 1}^k {\frac{{\left( {k - j + 1} \right)\overline {^{K - n}} }}{{(K - n)!}}{\nabla ^{K + 1}}z\left( j \right)}.
\end{array}}
\end{equation}

By using (\ref{Eq2.10}), (\ref{Eq3.41}) and (\ref{Eq3.42}), one has
\begin{equation}\label{Eq3.43}
{\textstyle\begin{array}{rl}
{\nabla ^{\alpha ,w(k)}}x(k) =&\hspace{-6pt} {w^{ - 1}}(k){\nabla ^n}z(k)\\
 =&\hspace{-6pt} {w^{ - 1}}(k){\nabla ^n}{T_K}(k) + {w^{ - 1}}(k){\nabla ^n}{R_K}(k)\\
 =&\hspace{-6pt} \sum\nolimits_{i = n}^K {\frac{{{{\left( {k - a} \right)}^{\overline {i - n} }}}}{{\left( {i - n} \right)!}}\frac{w(a)}{w(k)}[{\nabla ^{i,w(k)}}x\left( k \right)]_{k = a}} + \sum\nolimits_{j = a + 1}^k {\frac{{\left( {k - j + 1} \right)\overline {^{K - n}} }}{{(K - n)!}}\frac{w(j)}{w(k)}{\nabla ^{K + 1,w(j)}}z\left( j \right)},
\end{array}}
\end{equation}
which confirms (\ref{Eq3.36}) firmly.

In a similar way, one has
\begin{equation}\label{Eq3.44}
{\textstyle\begin{array}{rl}
{}_a^{\rm G}\nabla _k^{\alpha,w(k)}x(k) =&\hspace{-6pt} {w^{ - 1}}(k){}_a^{\rm G}\nabla _k^\alpha{ z(k)}\\
 =&\hspace{-6pt} {w^{ - 1}}(k){}_a^{\rm G}\nabla _k^\alpha{T_K}(k) + {w^{ - 1}}(k){}_a^{\rm G}\nabla _k^\alpha{R_K}(k)\\
 =&\hspace{-6pt} {w^{ - 1}}(k)\sum\nolimits_{i = 0}^K {{}_a^{\rm G}\nabla _k^\alpha\frac{{\left( {k - a} \right)\overline {^i} }}{{i!}}[{\nabla ^i}z\left( k \right)}]_{k = a}+ {w^{ - 1}}(k){}_a^{\rm G}\nabla _k^\alpha{}_{a}^{\rm G}{\nabla}_{k}^{-(K+1)}{\nabla^{K+1} z(k) }\\
 =&\hspace{-6pt} {w^{ - 1}}(k)\sum\nolimits_{i = 0}^K {\frac{{\left( {k - a} \right)\overline {^{i-\alpha}} }}{{\Gamma(i-\alpha+1)}}[{\nabla ^i}z\left( k \right)}]_{k = a}+ {w^{ - 1}}(k){}_{a}^{\rm G}{\nabla}_{k}^{\alpha-(K+1)}{\nabla^{K+1} z(k) }\\
 =&\hspace{-6pt} \sum\nolimits_{i = 0}^{K} {\frac{{ ( {k - a } )}^{\overline {i-\alpha} }}{{\Gamma  ( i-\alpha+1  ) }}\frac{w(a)}{w(k)}{[{{\nabla ^{i,w(k)}}x ( {k }  )}]_{k=a}}} + \sum\nolimits_{j = a+1}^k {\frac{{{{ ( {k - j + 1}  )}^{\overline {K-\alpha} }}}}{ \Gamma{ ( K-\alpha+1  ) } }}\frac{w(j)}{w(k)}{\nabla ^{K+1,w(j)}}x(j),
\end{array}}
\end{equation}
\begin{equation}\label{Eq3.45}
{\textstyle\begin{array}{rl}
{}_a^{\rm R}\nabla _k^{\alpha,w(k)}x(k) =&\hspace{-6pt} {w^{ - 1}}(k){}_a^{\rm R}\nabla _k^\alpha{ z(k)}\\
 =&\hspace{-6pt} {w^{ - 1}}(k)\nabla^n{}_a^{\rm G}\nabla _k^{\alpha-n}{T_K}(k) + {w^{ - 1}}(k)\nabla^n{}_a^{\rm G}\nabla _k^{\alpha-n}{R_K}(k)\\
  =&\hspace{-6pt} {w^{ - 1}}(k)\nabla^n\sum\nolimits_{i = 0}^K {\frac{{{{( {k - a} )}^{\overline {i -\alpha+ n} }}}}{{\Gamma(i-\alpha+n+1)}}[{\nabla ^i}z( k )]_{k = a}}+ {w^{ - 1}}(k)\nabla^n{}_a^{\rm G}\nabla _k^{\alpha-n-K-1}z(k)\\
 =&\hspace{-6pt} {w^{ - 1}}(k)\sum\nolimits_{i = 0}^K {\frac{{( {k - a} )\overline {^{i-\alpha}} }}{{\Gamma(i-\alpha+1)}}[{\nabla ^i}z( k )}]_{k = a}+ {w^{ - 1}}(k){}_{a}^{\rm G}{\nabla}_{k}^{\alpha-(K+1)}{\nabla^{K+1} z(k) }\\
 =&\hspace{-6pt} \sum\nolimits_{i = 0}^{K} {\frac{{ ( {k - a } )}^{\overline {i-\alpha} }}{{\Gamma  ( i-\alpha+1  ) }}\frac{w(a)}{w(k)}{[{{\nabla ^{i,w(k)}}x ( {k }  )}]_{k=a}}} + \sum\nolimits_{j = a+1}^k {\frac{{{{ ( {k - j + 1}  )}^{\overline {K-\alpha} }}}}{ \Gamma{ ( K-\alpha+1  ) } }}\frac{w(j)}{w(k)}{\nabla ^{K+1,w(j)}}x(j),
\end{array}}
\end{equation}
\begin{equation}\label{Eq3.46}
{\textstyle\begin{array}{rl}
{}_a^{\rm C}\nabla _k^{\alpha,w(k)}x(k) =&\hspace{-6pt} {w^{ - 1}}(k){}_a^{\rm C}\nabla _k^\alpha{ z(k)}\\
 =&\hspace{-6pt} {w^{ - 1}}(k){}_a^{\rm G}\nabla _k^{\alpha-n}\nabla^n{T_K}(k) + {w^{ - 1}}(k){}_a^{\rm G}\nabla _k^{\alpha-n}\nabla^n{R_K}(k)\\
  =&\hspace{-6pt} {w^{ - 1}}(k){}_a^{\rm G}\nabla _k^{\alpha-n}\sum\nolimits_{i = n}^K {\frac{{{{\left( {k - a} \right)}^{\overline {i - n} }}}}{{\left( {i - n} \right)!}}[{\nabla ^i}z\left( k \right)]_{k = a}}\\
   &\hspace{-6pt}+ {w^{ - 1}}(k){}_a^{\rm G}\nabla _k^{\alpha-n}\sum\nolimits_{j = a + 1}^k {\frac{{\left( {k - j + 1} \right)\overline {^{K - n}} }}{{(K - n)!}}{\nabla ^{K + 1}}z\left( j \right)}\\
 =&\hspace{-6pt} {w^{ - 1}}(k)\sum\nolimits_{i = n}^K {{}_a^{\rm G}\nabla _k^{\alpha-n}\frac{{\left( {k - a} \right)\overline {^{i-n}} }}{{(i-n)!}}[{\nabla ^i}z\left( k \right)}]_{k = a}\\
 &\hspace{-6pt}+ {w^{ - 1}}(k){}_a^{\rm G}\nabla _k^{\alpha-n}{}_{a}^{\rm G}{\nabla}_{k}^{-(K-n+1)}{\nabla^{K+1} z(k) }\\
 =&\hspace{-6pt} {w^{ - 1}}(k)\sum\nolimits_{i = n}^K {\frac{{\left( {k - a} \right)\overline {^{i-\alpha}} }}{{\Gamma(i-\alpha+1)}}[{\nabla ^i}z\left( k \right)}]_{k = a}\\
 &\hspace{-6pt}+ {w^{ - 1}}(k){}_{a}^{\rm G}{\nabla}_{k}^{\alpha-(K+1)}{\nabla^{K+1} z(k) }\\
 =&\hspace{-6pt} \sum\nolimits_{i = n}^{K} {\frac{{ ( {k - a } )}^{\overline {i-\alpha} }}{{\Gamma  ( i-\alpha+1  ) }}\frac{w(a)}{w(k)}{[{{\nabla ^{i,w(k)}}x ( {k }  )}]_{k=a}}} \\
  &\hspace{-6pt}+ \sum\nolimits_{j = a+1}^k {\frac{{{{ ( {k - j + 1}  )}^{\overline {K-\alpha} }}}}{ \Gamma{ ( K-\alpha+1  ) } }}\frac{w(j)}{w(k)}{\nabla ^{K+1,w(j)}}x(j).
\end{array}}
\end{equation}

All of these complete the proof.
\end{proof}

Typically, Theorem \ref{Theorem3.7} provides the nabla Taylor formula of nabla tempered fractional difference/sum expanded at the initial instant, which can be regarded as the generalization of  \cite[Corollary 1]{Wei:2019CNSNSa}. Theorem \ref{Theorem3.7} can be adopted for analysis and calculation. For example, Theorem \ref{Theorem3.1} - Theorem \ref{Theorem3.4} can be derived accordingly. To discuss the nabla Taylor series expanded at the initial instant, $\mathbb{Z}_{a}:=\{\cdots, a-2, a-1, a, a+1, a+2, \cdots\}$ is introduced first.

\begin{definition} \label{Definition3.1} \cite[Definition 3.49]{Goodrich:2015Book}
For $x:\mathbb{Z}_a\to\mathbb{R}$, if the following equation holds for $k\in\mathbb{N}_a$
\begin{equation}\label{Eq3.47}
{\textstyle
x\left( k \right)   = \sum\nolimits_{i = 0}^{+\infty} {\frac{\left( {k - a} \right)\overline {^i} }{{i!}}[{{\nabla ^i}x\left( k \right)}]_{k=a}},}
\end{equation}
then (\ref{Eq3.47}) is called the nabla Taylor series of $x$ expanded at $k=a$.
\end{definition}
\begin{theorem}\label{Theorem3.8}
If $x:\mathbb{Z}_{a}\to\mathbb{R}$ can be expanded as a nabla Taylor series at $k=a$, then for any $n\in\mathbb{Z}_+$, $w: \mathbb{N}_{a+1} \to \mathbb{R} \backslash\{0\}$, $k\in\mathbb{N}_{a+1}$, $a\in\mathbb{R}$, one has
\begin{equation}\label{Eq3.48}
{\textstyle  { {\nabla}^{n,w(k)} x(k) } =  \sum\nolimits_{i = n}^{+\infty} {\frac{{ ( {k - a }  )}^{\overline {i-n} }}{(i-n)!}\frac{w(a)}{w(k)}{[{{\nabla ^{i,w(k)}}x ( {k }  )}]_{k=a}}} .}
\end{equation}
If $x:\mathbb{Z}_{a}\to\mathbb{R}$ can be expanded as a nabla Taylor series at $k=a$, then for any $\alpha\in\mathbb{R}\backslash\mathbb{Z}_+$, $w: \mathbb{N}_{a+1} \to \mathbb{R} \backslash\{0\}$, $k\in\mathbb{N}_{a+1}$, $a\in\mathbb{R}$, one has
\begin{equation}\label{Eq3.49}
{\textstyle { {}_{a}^{\rm G}{\nabla}_{k}^{\alpha,w(k)} x(k) } = \sum\nolimits_{i = 0}^{+\infty} {\frac{{ ( {k - a }  )}^{\overline {i-\alpha} }}{{\Gamma  ( i-\alpha+1  ) } }\frac{w(a)}{w(k)}{[{{\nabla ^{i,w(k)}}x ( {k }  )}]_{k=a}}} .}
\end{equation}
If $x:\mathbb{Z}_{a}\to\mathbb{R}$ can be expanded as a nabla Taylor series at $k=a$, then for any $\alpha\in(n-1,n)$, $n\in\mathbb{Z}_+$, $w: \mathbb{N}_{a+1} \to \mathbb{R} \backslash\{0\}$, $k\in\mathbb{N}_{a+1}$, $a\in\mathbb{R}$, one has
\begin{equation}\label{Eq3.50}
{\textstyle  { {}_{a}^{\rm R}{\nabla}_{k}^{\alpha,w(k)} x(k) } = \sum\nolimits_{i = 0}^{+\infty} {\frac{{ ( {k - a }  )}^{\overline {i-\alpha} }}{{\Gamma  ( i-\alpha+1  ) } }\frac{w(a)}{w(k)}{[{{\nabla ^{i,w(k)}}x ( {k }  )}]_{k=a}}} .}
\end{equation}
If $x:\mathbb{Z}_{a}\to\mathbb{R}$ can be expanded as a nabla Taylor series at $k=a$, then for any $\alpha\in(n-1,n)$, $n\in\mathbb{Z}_+$, $w: \mathbb{N}_{a+1} \to \mathbb{R} \backslash\{0\}$, $k\in\mathbb{N}_{a+1}$, $a\in\mathbb{R}$, one has
\begin{equation}\label{Eq3.51}
{\textstyle  { {}_{a}^{\rm C}{\nabla}_{k}^{\alpha,w(k)} x(k) } = \sum\nolimits_{i = n}^{+\infty} {\frac{{ ( {k - a }  )}^{\overline {i-\alpha} }}{{\Gamma  ( i-\alpha+1  ) } }\frac{w(a)}{w(k)}{[{{\nabla ^{i,w(k)}}x ( {k }  )}]_{k=a}}} .}
\end{equation}
\end{theorem}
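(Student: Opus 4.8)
The plan is to obtain (\ref{Eq3.48})--(\ref{Eq3.51}) as the $K\to+\infty$ limits of the finite nabla Taylor formulas (\ref{Eq3.36})--(\ref{Eq3.39}) already proved in Theorem~\ref{Theorem3.7}. First I would set $z(k):=w(k)x(k)$ and record two elementary identities that convert the tempered objects into ordinary ones. By (\ref{Eq2.10})--(\ref{Eq2.12}) the left-hand operators satisfy $\nabla^{n,w(k)}x(k)=w^{-1}(k)\nabla^{n}z(k)$, ${}_a^{\rm G}\nabla_k^{\alpha,w(k)}x(k)=w^{-1}(k){}_a^{\rm G}\nabla_k^{\alpha}z(k)$, and likewise in the Riemann--Liouville and Caputo cases; and since $[\nabla^{i,w(k)}x(k)]_{k=a}=w^{-1}(a)[\nabla^{i}z(k)]_{k=a}$, the generic coefficient simplifies as $\frac{w(a)}{w(k)}[\nabla^{i,w(k)}x(k)]_{k=a}=w^{-1}(k)[\nabla^{i}z(k)]_{k=a}$. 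Multiplying each of (\ref{Eq3.48})--(\ref{Eq3.51}) through by $w(k)$ therefore collapses every tempered statement into the corresponding classical ($w\equiv1$) nabla Taylor series for $z$; for instance (\ref{Eq3.49}) becomes ${}_a^{\rm G}\nabla_k^{\alpha}z(k)=\sum_{i=0}^{+\infty}\frac{(k-a)^{\overline{i-\alpha}}}{\Gamma(i-\alpha+1)}[\nabla^{i}z(k)]_{k=a}$. This reduction is pure bookkeeping from the definitions, and it also explains the differing starting indices (the $\nabla^{n}$ in the $\nabla^{n,w}$ and Caputo cases annihilates the monomials with $i<n$, exactly as in the passage from $T_K$ to $\nabla^{n}T_K$ inside Theorem~\ref{Theorem3.7}).

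With the reduction in hand, I would apply Theorem~\ref{Theorem3.7} at an arbitrary truncation order $K$ (larger than the relevant threshold $n$). Each of (\ref{Eq3.36})--(\ref{Eq3.39}) writes the fixed, $K$-independent left-hand operator as the partial sum up to $i=K$ plus a remainder, which from the final lines of the proof of Theorem~\ref{Theorem3.7} equals $w^{-1}(k)$ times an ordinary remainder of $z$: namely $w^{-1}(k)\nabla^{n}R_K(k)$ in the $\nabla^{n}$ case and $w^{-1}(k){}_a^{\rm G}\nabla_k^{\alpha-(K+1)}\nabla^{K+1}z(k)$ in the fractional cases, where $R_K(k)={}_a^{\rm G}\nabla_k^{-(K+1)}\nabla^{K+1}z(k)=\sum_{j=a+1}^{k}\frac{(k-j+1)^{\overline{K}}}{K!}\nabla^{K+1}z(j)$. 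Because the left-hand sides do not depend on $K$, the whole theorem is equivalent to showing that these remainders tend to $0$ as $K\to+\infty$.

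The crux, and the step I expect to be the main obstacle, is precisely this vanishing of the remainder. The hypothesis that $x$ admits a nabla Taylor series is, by Definition~\ref{Definition3.1} and Lemma~\ref{Lemma3.1}, exactly the assertion that the classical remainder of $x$ tends to $0$; so the delicate point is to transfer Taylor-expandability from $x$ to $z=w(k)x(k)$ and then to control the finitely many terms $\frac{(k-j+1)^{\overline{K-n}}}{(K-n)!}\nabla^{K+1}z(j)$ and their fractional analogues $\frac{(k-j+1)^{\overline{K-\alpha}}}{\Gamma(K-\alpha+1)}\nabla^{K+1}z(j)$ as $K\to+\infty$. For the genuine summation instances (i.e. the tempered sum with $\alpha<0$ in (\ref{Eq3.49})) I would route this through Theorem~\ref{Theorem3.5}, applying the operator to the uniformly convergent sequence of Taylor polynomials of $z$ and interchanging limit and operator; for the difference instances one instead checks directly that each fixed-$j$ term dies once $\nabla^{K+1}z(j)$ decays faster than the polynomially growing weights. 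The hard part is that this decay is governed by $z$ rather than by $x$ itself, so bridging ``$x$ expandable'' to ``remainder of $z$ vanishes'' is where the real work lies; once that bridge is secured, letting $K\to+\infty$ in (\ref{Eq3.36})--(\ref{Eq3.39}) delivers (\ref{Eq3.48})--(\ref{Eq3.51}) respectively.
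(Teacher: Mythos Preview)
Your plan matches the paper's own proof, which is a single sentence: ``By using Definition~\ref{Definition3.1} and the similar method with Theorem~\ref{Theorem3.7}, the proof can be completed immediately.'' In other words, let $K\to+\infty$ in (\ref{Eq3.36})--(\ref{Eq3.39}).

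On the execution, the interchange you worry about is much easier than you make it. The remainder in each of (\ref{Eq3.36})--(\ref{Eq3.39}) is precisely the original operator applied to $R_K$ (this is how (\ref{Eq3.44})--(\ref{Eq3.46}) are written), and each of these operators at a fixed $k$ is, by (\ref{Eq2.3}) and (\ref{Eq2.9}), a \emph{$K$-independent} finite linear combination of the values $R_K(a+1),\ldots,R_K(k)$. So once each $R_K(j)\to 0$ you are summing finitely many terms each tending to $0$; there is no growth/decay balance to control between the kernel $\frac{(k-j+1)^{\overline{K-\alpha}}}{\Gamma(K-\alpha+1)}$ and $\nabla^{K+1}z(j)$, and Theorem~\ref{Theorem3.5} is unnecessary.

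Your genuine concern---that the hypothesis is Taylor-expandability of $x$ while the argument needs it for $z=w\cdot x$---is real, and the paper does not address it either; it simply invokes Definition~\ref{Definition3.1} as if for $z$. For an arbitrary $w$ (which as stated is only defined on $\mathbb{N}_{a+1}$, so that $[\nabla^{i}z(k)]_{k=a}$ and $R_K$ already require values of $w$ to the left of $a+1$) there is no passage from ``$x$ expandable'' to ``$z$ expandable''. The clean reading is that the hypothesis should really be that $z=w\cdot x$ admits the nabla Taylor series at $k=a$, i.e.\ that (\ref{Eq3.52}) holds; under that reading your reduction plus the trivial interchange above yields (\ref{Eq3.48})--(\ref{Eq3.51}) immediately, exactly as the paper intends.
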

\begin{proof}
By using Definition \ref{Definition3.1} and the similar method with Theorem \ref{Theorem3.7}, the proof can be completed immediately.
\end{proof}

Sometimes, $x$ is called analytic like \cite[Theorem 11, Theorem 12]{Almeida:2019RMJM}. In this condition, $\mathop {\lim }\limits_{K \to  + \infty } {R_K}(k) = 0$ like \cite[Theorem 3.50]{Goodrich:2015Book}. When $\alpha=0$ in (\ref{Eq3.49}), it follows
\begin{equation}\label{Eq3.52}
{\textstyle
x\left( k \right)   = \sum\nolimits_{i = 0}^{+\infty} {\frac{\left( {k - a} \right)\overline {^i} }{{i!}}\frac{w(a)}{w(k)}[{{\nabla ^{i,w(k)}}x\left( k \right)}]_{k=a}}.}
\end{equation}
To discuss the nabla Taylor formula expanded at the future instant, a new set is introduced here $\mathbb{N}_{a}^{b}:=\{a, a+1, a+2, \cdots, b\}$.

\begin{lemma}\label{Lemma3.2}\cite[Theorem 1]{Wei:2019CNSNSa}
For any $x:\mathbb{N}_{a-K-1}\to\mathbb{R}$, $a\in\mathbb{R}$, $K\in\mathbb{N}$, $k\in\mathbb{N}_{a}^b$, $K<b-k+1$, one has
\begin{equation}\label{Eq3.53}
{\textstyle
x\left( k \right)   = \sum\nolimits_{i = 0}^{K} {\frac{\left( {k - b} \right)\overline {^i} }{{i!}}[{{\nabla ^i}x\left( k \right)}]_{k=b}} - \sum\nolimits_{j = k + 1}^b {{}\frac{{\left( {k - j + 1} \right)\overline {^{K}} }}{{{K}!}}{\nabla ^{K+1}}x\left( j \right)},}
\end{equation}
which can be regarded as the nabla Taylor formula of $x$ expanded at the future instant $k=b$.
\end{lemma}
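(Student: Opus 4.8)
The plan is to establish (\ref{Eq3.53}) by induction on the order $K$, reducing the summation remainder one order at a time with the nabla summation-by-parts identity \cite[Theorem 3.39]{Goodrich:2015Book}, in complete parallel with the forward expansion of Lemma \ref{Lemma3.1} but with the summation anchored at the future instant $k=b$ rather than at the initial instant $k=a$. Write $S_K(k):=\sum\nolimits_{i=0}^{K}\frac{(k-b)\overline{^i}}{i!}[\nabla^i x(k)]_{k=b}$ for the polynomial part and $R_K(k):=-\sum\nolimits_{j=k+1}^{b}\frac{(k-j+1)\overline{^K}}{K!}\nabla^{K+1}x(j)$ for the remainder, so that (\ref{Eq3.53}) reads $x(k)=S_K(k)+R_K(k)$. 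For the base case $K=0$ one has $(k-b)\overline{^0}=(k-j+1)\overline{^0}=1$, so the right-hand side collapses to $x(b)-\sum\nolimits_{j=k+1}^{b}\nabla x(j)$; the remaining sum telescopes to $x(b)-x(k)$, whence $x(k)=x(b)-(x(b)-x(k))$ holds for every $k\in\mathbb{N}_a^b$.

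For the inductive step, the crucial ingredient is the backward (i.e.\ $j$-direction) difference identity
\begin{equation*}
\nabla_j\frac{(k-j)\overline{^{K+1}}}{(K+1)!}=-\frac{(k-j+1)\overline{^K}}{K!},
\end{equation*}
which follows from the elementary rising-factorial relation $m\overline{^{K+1}}-(m+1)\overline{^{K+1}}=-(K+1)(m+1)\overline{^K}$ evaluated at $m=k-j$. Setting $f(j):=\frac{(k-j)\overline{^{K+1}}}{(K+1)!}$ and $g(j):=\nabla^{K+1}x(j)$, so that $R_K(k)=\sum\nolimits_{j=k+1}^{b}\nabla_j f(j)\,g(j)$, summation by parts over $\mathbb{N}_{k+1}^b$ yields
\begin{equation*}
R_K(k)=\big[f(j)g(j)\big]_{j=k}^{j=b}-\sum\nolimits_{j=k+1}^{b}f(j-1)\,\nabla g(j).
\end{equation*}
Here the lower boundary term vanishes because $f(k)=\frac{0\overline{^{K+1}}}{(K+1)!}=0$ (the rising factorial $0\overline{^{K+1}}$ carries a zero factor), the upper boundary term equals $\frac{(k-b)\overline{^{K+1}}}{(K+1)!}[\nabla^{K+1}x(k)]_{k=b}$, and the residual sum, with $f(j-1)=\frac{(k-j+1)\overline{^{K+1}}}{(K+1)!}$ and $\nabla g(j)=\nabla^{K+2}x(j)$, is exactly $R_{K+1}(k)$. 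Thus $R_K(k)=\frac{(k-b)\overline{^{K+1}}}{(K+1)!}[\nabla^{K+1}x(k)]_{k=b}+R_{K+1}(k)$, and adding $S_K(k)$ promotes the order-$K$ identity to $x(k)=S_{K+1}(k)+R_{K+1}(k)$, closing the induction.

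The main obstacle I anticipate is not the induction skeleton but the careful handling of the rising factorial $p\overline{^q}=\Gamma(p+q)/\Gamma(p)$ at nonpositive-integer arguments: both the $j$-direction difference identity and the vanishing $f(k)=0$ via $0\overline{^{K+1}}=0$ require correctly reading products of consecutive integers that straddle the poles of $\Gamma$. I would also record precisely how the hypothesis $K<b-k+1$ is used, namely that it keeps every coefficient $(k-b)\overline{^i}$ with $i\le K$ nondegenerate and ensures that the evaluations $[\nabla^i x(k)]_{k=b}$, $i\le K$, are well-defined on the stated domain $\mathbb{N}_{a-K-1}$, so that the telescoping in the base case and the summation by parts in the inductive step remain legitimate throughout the range $j\in\mathbb{N}_{k+1}^b$.
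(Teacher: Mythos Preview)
Your induction argument is correct: the base case telescopes as you describe, the $j$-difference identity $\nabla_j\frac{(k-j)\overline{^{K+1}}}{(K+1)!}=-\frac{(k-j+1)\overline{^K}}{K!}$ is exactly what the summation-by-parts formula from \cite[Theorem 3.39]{Goodrich:2015Book} needs, and the boundary bookkeeping (in particular $0\overline{^{K+1}}=0$) is handled cleanly.

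Note, however, that the paper does not give its own proof of this lemma. It is quoted verbatim as \cite[Theorem~1]{Wei:2019CNSNSa} and used as an input for Theorem~\ref{Theorem3.9}. So there is no in-paper argument to compare against; your write-up supplies a self-contained proof where the paper simply cites one. In that sense your approach is a genuine addition: it makes the manuscript independent of the external reference for this step, and the mechanism you use (iterated summation by parts on the remainder) is precisely the mirror image of the forward expansion in Lemma~\ref{Lemma3.1}, so it fits naturally into the paper's toolkit.

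One small remark on your closing paragraph: the hypothesis $K<b-k+1$ is not really needed to keep $(k-b)\overline{^i}$ ``nondegenerate'' --- those rising factorials are perfectly well-defined products of integers for every $i$, and for $i>b-k$ they simply vanish (as the paper itself notes just after Definition~\ref{Definition3.2}). The formula (\ref{Eq3.53}) remains valid without that restriction; the condition in the cited source is a convenience rather than a logical necessity for your induction, so you need not lean on it to justify the telescoping or the summation by parts.
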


\begin{theorem}\label{Theorem3.9}
For any $x:\mathbb{N}_{a-K-1}\to\mathbb{R}$, $a\in\mathbb{R}$, $K\in\mathbb{N}$, $\alpha\in\mathbb{R}\backslash\mathbb{Z}_+$, $w: \mathbb{N}_{a+1} \to \mathbb{R} \backslash\{0\}$, $k\in\mathbb{N}_{a+1}$, one has
\begin{equation}\label{Eq3.54}
\textstyle{ \begin{array}{rl}
 { {}_{a}^{\rm G}{\nabla}_{k}^{\alpha,w(k)} x(k) } =&\hspace{-6pt} \sum\nolimits_{i = 0}^{ K } { ( {\begin{smallmatrix}
\alpha \\
i
\end{smallmatrix}}  ) \frac{{{{ ( {k - a- i}  )}^{\overline {i - \alpha } }}}}{{\Gamma  ( {i - \alpha  + 1}  )}}{{\nabla ^{i,w(k)}} x ( {k}  )}} \\
  &\hspace{-6pt}- \sum\nolimits_{i = a + 2}^k {\frac{w(i)}{w(k)}{\nabla ^{K + 1,w(i)}}x\left( i \right)\sum\nolimits_{j = a + 2}^i {\frac{{\left( {k - j+2} \right)\overline {^{ - \alpha  - 1}} }}{{\Gamma \left( { - \alpha } \right)}}} } \frac{{\left( {j - i} \right)\overline {^K} }}{{K!}}.
  \end{array}}
\end{equation}

For any $x:\mathbb{N}_{a-K-1}\to\mathbb{R}$, $a\in\mathbb{R}$, $K\in\mathbb{N}$, $\alpha\in(n-1,n)$, $n\in\mathbb{Z}_+$, $w: \mathbb{N}_{a+1} \to \mathbb{R} \backslash\{0\}$, $k\in\mathbb{N}_{a+1}$, one has
\begin{equation}\label{Eq3.55}
\textstyle{ \begin{array}{rl}
 { {}_{a}^{\rm R}{\nabla}_{k}^{\alpha,w(k)} x(k) } =&\hspace{-6pt} \sum\nolimits_{i = 0}^{ K } { ( {\begin{smallmatrix}
\alpha \\
i
\end{smallmatrix}}  ) \frac{{{{ ( {k - a- i}  )}^{\overline {i - \alpha } }}}}{{\Gamma  ( {i - \alpha  + 1}  )}}{{\nabla ^{i,w(k)}} x ( {k}  )}} \\
  &\hspace{-6pt}- \sum\nolimits_{i = a + 2}^k {\frac{w(i)}{w(k)}{\nabla ^{K + 1,w(i)}}x\left( i \right)\sum\nolimits_{j = a + 2}^i {\frac{{\left( {k - j+2} \right)\overline {^{ - \alpha  - 1}} }}{{\Gamma \left( { - \alpha } \right)}}} } \frac{{\left( {j - i} \right)\overline {^K} }}{{K!}}.
  \end{array}}
\end{equation}

For any $x:\mathbb{N}_{a-K-1}\to\mathbb{R}$, $a\in\mathbb{R}$, $K\in\mathbb{N}$, $K\ge n$ $\alpha\in(n-1,n)$, $n\in\mathbb{Z}_+$, $w: \mathbb{N}_{a+1} \to \mathbb{R} \backslash\{0\}$, $k\in\mathbb{N}_{a+1}$, one has
\begin{equation}\label{Eq3.56}
\textstyle{ \begin{array}{rl}
 { {}_{a}^{\rm C}{\nabla}_{k}^{\alpha,w(k)} x(k) } =&\hspace{-6pt}  \sum\nolimits_{i = n}^K {\left( {\begin{smallmatrix}
{\alpha  - n}\\
{i - n}
\end{smallmatrix}} \right)\frac{{\left( {k - a - i + n} \right)\overline {^{i - \alpha }} }}{{\Gamma \left( {i - \alpha  + 1} \right)}}{\nabla ^{i,w(k)}}x\left( k \right)} \\
 &\hspace{-6pt}- \sum\nolimits_{i = a + 2}^k {\frac{w(i)}{w(k)}{\nabla ^{K + 1,w(i)}}x\left( i \right)\sum\nolimits_{j = a + 2}^i {\frac{{\left( {k - j+2} \right)\overline {^{ n- \alpha  - 1}} }}{{\Gamma \left( { n- \alpha } \right)}}} } \frac{{\left( {j - i} \right)\overline {^{K-n}} }}{{(K-n)!}}.
  \end{array}}\hspace{-12pt}
\end{equation}
\end{theorem}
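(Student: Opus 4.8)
The plan is to mirror the strategy of Theorem~\ref{Theorem3.7}, but to replace the initial-instant expansion of Lemma~\ref{Lemma3.1} by the future-instant expansion of Lemma~\ref{Lemma3.2}. First I would set $z(k):=w(k)x(k)$, so that by (\ref{Eq2.9}) the tempered operator reduces to ${}_a^{\rm G}\nabla_k^{\alpha,w(k)}x(k)=w^{-1}(k)\,{}_a^{\rm G}\nabla_k^{\alpha}z(k)$, and then write the latter out explicitly as the single sum $\sum_{j=a+1}^{k}\frac{(k-j+1)^{\overline{-\alpha-1}}}{\Gamma(-\alpha)}z(j)$ from (\ref{Eq2.3}). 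The core idea is to expand each summand $z(j)$ about the future instant $b=k$ via Lemma~\ref{Lemma3.2}, namely $z(j)=\sum_{i=0}^{K}\frac{(j-k)^{\overline{i}}}{i!}\nabla^{i}z(k)-\sum_{l=j+1}^{k}\frac{(j-l+1)^{\overline{K}}}{K!}\nabla^{K+1}z(l)$, substitute this into the Gr\"{u}nwald--Letnikov sum, and interchange the order of summation.

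Carrying this out splits ${}_a^{\rm G}\nabla_k^{\alpha}z(k)$ into a polynomial part and a remainder part. In the polynomial part the coefficient of $\nabla^{i}z(k)$ becomes $\frac{1}{i!}\sum_{j=a+1}^{k}\frac{(k-j+1)^{\overline{-\alpha-1}}}{\Gamma(-\alpha)}(j-k)^{\overline{i}}$; since $(j-k)^{\overline{i}}$ vanishes for $j>k-i$, the inner sum truncates automatically, and I expect it to collapse to $\binom{\alpha}{i}\frac{(k-a-i)^{\overline{i-\alpha}}}{\Gamma(i-\alpha+1)}$. In the remainder part, interchanging the $j$- and $l$-sums (so that $l$ runs over $a+2,\dots,k$ and, for fixed $l$, $j$ runs over $a+1,\dots,l-1$) and then shifting the summation index by one reproduces the double sum $-\sum_{i=a+2}^{k}\frac{w(i)}{w(k)}\nabla^{K+1,w(i)}x(i)\sum_{j=a+2}^{i}\frac{(k-j+2)^{\overline{-\alpha-1}}}{\Gamma(-\alpha)}\frac{(j-i)^{\overline{K}}}{K!}$. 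Converting $\nabla^{i}z(k)=w(k)\nabla^{i,w(k)}x(k)$ and $\nabla^{K+1}z(i)=w(i)\nabla^{K+1,w(i)}x(i)$ back to the tempered differences then yields (\ref{Eq3.54}).

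The main obstacle will be establishing the first-sum identity $\frac{1}{i!}\sum_{j=a+1}^{k}\frac{(k-j+1)^{\overline{-\alpha-1}}}{\Gamma(-\alpha)}(j-k)^{\overline{i}}=\binom{\alpha}{i}\frac{(k-a-i)^{\overline{i-\alpha}}}{\Gamma(i-\alpha+1)}$, which is a Chu--Vandermonde-type convolution of rising factorials. The case $i=0$ is the standard hockey-stick summation $\sum_{m=0}^{M}\binom{m-\alpha-1}{m}=\binom{M-\alpha}{M}$ with $M=k-a-1$, and I would handle general $i$ by induction on $i$, using the nabla monomial rule $\nabla\frac{(k-a)^{\overline{\beta}}}{\Gamma(\beta+1)}=\frac{(k-a)^{\overline{\beta-1}}}{\Gamma(\beta)}$ together with summation by parts, exactly as in the manipulations (\ref{Eq3.4})--(\ref{Eq3.5}).

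Finally, (\ref{Eq3.55}) follows at once because Theorem~\ref{Theorem3.1} gives ${}_a^{\rm R}\nabla_k^{\alpha,w(k)}x(k)={}_a^{\rm G}\nabla_k^{\alpha,w(k)}x(k)$ for $\alpha\in(n-1,n)$, so its future-instant formula is identical to (\ref{Eq3.54}). For (\ref{Eq3.56}) I would use the Caputo relation (\ref{Eq2.14}), ${}_a^{\rm C}\nabla_k^{\alpha,w(k)}x(k)=w^{-1}(k)\,{}_a^{\rm G}\nabla_k^{\alpha-n}\nabla^{n}z(k)$, and run the same substitution on $\tilde z:=\nabla^{n}z$ with order $\alpha-n\in(-1,0)$ and expansion degree $K-n$ (this is where the hypothesis $K\ge n$ is needed); reindexing the resulting expansion by $i\mapsto i-n$ converts the factors $\binom{\alpha-n}{i-n}$ and $(k-a-i+n)^{\overline{i-\alpha}}$ into precisely the terms appearing in (\ref{Eq3.56}), while the remainder kernel $(k-j+2)^{\overline{n-\alpha-1}}/\Gamma(n-\alpha)$ and $(j-i)^{\overline{K-n}}/(K-n)!$ arise from the shifted order and degree.
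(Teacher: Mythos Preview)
Your plan is correct and follows essentially the same route as the paper: set $z=wx$, expand each summand $z(j)$ about the future instant $b=k$ via Lemma~\ref{Lemma3.2}, split into the polynomial and remainder pieces, and handle the Riemann--Liouville and Caputo cases exactly as you describe (Theorem~\ref{Theorem3.1} for the former, applying the Gr\"{u}nwald--Letnikov result to $\nabla^n z$ with order $\alpha-n$ and degree $K-n$ for the latter). The one place where the paper differs from your outline is the evaluation of the polynomial-part sum $\frac{1}{i!}\sum_{j=a+1}^{k}\frac{(k-j+1)^{\overline{-\alpha-1}}}{\Gamma(-\alpha)}(j-k)^{\overline{i}}$: rather than induction on $i$ with summation by parts, the paper rewrites both rising factorials as Gamma ratios, uses the reflection identity $\frac{\Gamma(j-k+i)}{\Gamma(j-k)\Gamma(k-j+1)}=\frac{(-1)^i}{\Gamma(k-j+1-i)}$ to collapse the product, recognizes the result as $\binom{\alpha}{i}\frac{(k-j+1-i)^{\overline{i-\alpha-1}}}{\Gamma(i-\alpha)}$, and then telescopes via $-\nabla_j\frac{(k-j-i)^{\overline{i-\alpha}}}{\Gamma(i-\alpha+1)}=\frac{(k-j+1-i)^{\overline{i-\alpha-1}}}{\Gamma(i-\alpha)}$ to obtain the closed form in one stroke for all $i$. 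Your inductive scheme would also succeed, but the paper's direct Gamma-function manipulation is shorter and avoids having to track how the identity transforms under the inductive step.
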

\begin{proof}
Letting $z(k):=w(k)x(k)$, then one has
\begin{equation}\label{Eq3.57}
{\textstyle
z(j)=T_K(j)-R_K(j),}
\end{equation}
where $T_K(k):=\sum\nolimits_{i = 0}^K {\frac{{\left( {j - k} \right)\overline {^i} }}{{i!}}{\nabla ^i}z\left( k \right)}$, $R_K(j):=\sum\nolimits_{i = j + 1}^k {\frac{{( {j - i + 1} )\overline {^K} }}{{K!}}{\nabla ^{K + 1}}z( i )}$, $k,j\in\mathbb{N}_{a+1}$, $j\le k$, $K<k-j+1$.

By using the relationship in (\ref{Eq2.9}), one has
\begin{equation}\label{Eq3.58}
\begin{array}{rl}
{}_a^{\rm{G}}\nabla _k^{\alpha,w(k)} x\left( k \right)= &\hspace{-6pt}w^{-1}(k){}_a^{\rm{G}}\nabla _k^{\alpha} z\left( k \right)\\
 = &\hspace{-6pt} w^{-1}(k)\sum\nolimits_{j = a + 1}^k {\frac{{\left( {k - j + 1} \right)\overline {^{ - \alpha  - 1}} }}{{\Gamma \left( { - \alpha } \right)}}z\left( j \right)} \\
 = &\hspace{-6pt} w^{-1}(k)\sum\nolimits_{j = a + 1}^k {\frac{{\left( {k - j + 1} \right)\overline {^{ - \alpha  - 1}} }}{{\Gamma \left( { - \alpha } \right)}}T_K(j)}- w^{-1}(k)\sum\nolimits_{j = a + 1}^k {\frac{{\left( {k - j + 1} \right)\overline {^{ - \alpha  - 1}} }}{{\Gamma \left( { - \alpha } \right)}}R_K(j) }.
\end{array}
\end{equation}

The first term in the right hand of (\ref{Eq3.58}) can be further expressed by
\begin{equation}\label{Eq3.59}
\begin{array}{rl}
w^{-1}(k)\sum\nolimits_{j = a + 1}^k {\frac{{\left( {k - j + 1} \right)\overline {^{ - \alpha  - 1}} }}{{\Gamma \left( { - \alpha } \right)}}T_K(j)}  =&\hspace{-6pt} w^{-1}(k)\sum\nolimits_{i = 0}^K {\frac{1}{{\Gamma \left( { - \alpha } \right)i!}}{\nabla ^i}z\left( k \right)\sum\nolimits_{j = a + 1}^k {\left( {k - j + 1} \right)\overline {^{ - \alpha  - 1}} \left( {j - k} \right)\overline {^i} } } \\
 =&\hspace{-6pt} w^{-1}(k)\sum\nolimits_{i = 0}^K {\frac{1}{{\Gamma \left( { - \alpha } \right)i!}}{\nabla ^i}z\left( k \right)\sum\nolimits_{j = a + 1}^k {\frac{{\Gamma \left( {k - j - \alpha } \right)\Gamma \left( {j - k + i} \right)}}{{\Gamma \left( {k - j + 1} \right)\Gamma \left( {j - k} \right)}}} } \\
 =&\hspace{-6pt} w^{-1}(k)\sum\nolimits_{i = 0}^K {\frac{{{{( - 1)}^i}}}{{\Gamma \left( { - \alpha } \right)i!}}{\nabla ^i}z\left( k \right)\sum\nolimits_{j = a + 1}^k {\frac{{\Gamma \left( {k - j - \alpha } \right)}}{{\Gamma \left( {k - j + 1 - i} \right)}}} } \\
 =&\hspace{-6pt} w^{-1}(k)\sum\nolimits_{i = 0}^K {\left( {\begin{smallmatrix}
\alpha \\
i
\end{smallmatrix}} \right){\nabla ^i}z\left( k \right)\sum\nolimits_{j = a + 1}^k {\frac{{\left( {k - j + 1 - i} \right)\overline {^{i - \alpha  - 1}} }}{{\Gamma \left( {i - \alpha } \right)}}} } \\
 =&\hspace{-6pt} -w^{-1}(k)\sum\nolimits_{i = 0}^K {\left( {\begin{smallmatrix}
\alpha \\
i
\end{smallmatrix}} \right){\nabla ^i}z\left( k \right)\sum\nolimits_{j = a + 1}^k {\nabla \frac{{\left( {k - j  - i} \right)\overline {^{i - \alpha }} }}{{\Gamma \left( {i - \alpha  + 1} \right)}}} } \\
  =&\hspace{-6pt} w^{-1}(k)\sum\nolimits_{i = 0}^K {\left( {\begin{smallmatrix}
\alpha \\
i
\end{smallmatrix}} \right){\nabla ^i}z\left( k \right)\big[ {\frac{{\left( {k - a - i} \right)\overline {^{i - \alpha }} }}{{\Gamma \left( {i - \alpha  + 1} \right)}} - \frac{{\left( { - i} \right)\overline {^{i - \alpha }} }}{{\Gamma \left( {i - \alpha  + 1} \right)}}} \big]} \\
 =&\hspace{-6pt} w^{-1}(k)\sum\nolimits_{i = 0}^K {\left( {\begin{smallmatrix}
\alpha \\
i
\end{smallmatrix}} \right)\frac{{\left( {k - a - i} \right)\overline {^{i - \alpha }} }}{{\Gamma \left( {i - \alpha  + 1} \right)}}{\nabla ^i}z\left( k \right)}\\
=&\hspace{-6pt} \sum\nolimits_{i = 0}^K {\left( {\begin{smallmatrix}
\alpha \\
i
\end{smallmatrix}} \right)\frac{{\left( {k - a - i} \right)\overline {^{i - \alpha }} }}{{\Gamma \left( {i - \alpha  + 1} \right)}}{\nabla ^{i,w(k)}}x\left( k \right)},
\end{array}
\end{equation}
where $-{\nabla \frac{{\left( {k - j  - i} \right)\overline {^{i - \alpha }} }}{{\Gamma \left( {i - \alpha  + 1} \right)}}}={\frac{{\left( {k - j + 1 - i} \right)\overline {^{i - \alpha  - 1}} }}{{\Gamma \left( {i - \alpha } \right)}}}$ is adopted and the first order difference is taken with respect to $j$.

The second term in the right hand of (\ref{Eq3.58}) can be described as
\begin{equation}\label{Eq3.60}
{\textstyle \begin{array}{rl}
w^{-1}(k)\sum\nolimits_{j = a + 1}^k {\frac{{\left( {k - j + 1} \right)\overline {^{ - \alpha  - 1}} }}{{\Gamma \left( { - \alpha } \right)}}R_K(j)} =&\hspace{-6pt}w^{-1}(k)\sum\nolimits_{j = a + 2}^k {\frac{{\left( {k - j + 2} \right)\overline {^{ - \alpha  - 1}} }}{{\Gamma \left( { - \alpha } \right)}}\sum\nolimits_{i = j}^k {\frac{{( {j - i } )\overline {^K} }}{{K!}}{\nabla ^{K + 1}}z( i )}} \\
 =&\hspace{-6pt} w^{-1}(k)\sum\nolimits_{i = a + 2}^k {{\nabla ^{K + 1}}z\left( i \right)\sum\nolimits_{j = a + 2}^i {\frac{{\left( {k - j+2} \right)\overline {^{ - \alpha  - 1}} }}{{\Gamma \left( { - \alpha } \right)}}} } \frac{{\left( {j - i} \right)\overline {^K} }}{{K!}}\\
 =&\hspace{-6pt} \sum\nolimits_{i = a + 2}^k {\frac{w(i)}{w(k)}{\nabla ^{K + 1,w(i)}}x\left( i \right)\sum\nolimits_{j = a + 2}^i {\frac{{\left( {k - j+2} \right)\overline {^{ - \alpha  - 1}} }}{{\Gamma \left( { - \alpha } \right)}}} } \frac{{\left( {j - i} \right)\overline {^K} }}{{K!}}.
\end{array}}
\end{equation}
By substituting (\ref{Eq3.59}) and (\ref{Eq3.60}) into (\ref{Eq3.58}), the desired result in (\ref{Eq3.54}) follows.

From Theorem \ref{Theorem3.1}, one has ${}_a^{\rm{R}}\nabla _k^{\alpha,w(k)} x\left( k \right) ={}_a^{\rm{G}}\nabla _k^{\alpha,w(k)} x\left( k \right) $. Therefore, the result in (\ref{Eq3.55}) can be derived for any $\alpha\in(n-1,n)$, $n\in\mathbb{Z}_+$.

By using (\ref{Eq3.57}), one has
\begin{equation}\label{Eq3.61}
{\textstyle \begin{array}{rl}
\nabla^n z(j) =&\hspace{-6pt}\sum\nolimits_{i = 0}^{K-n} {\frac{{\left( {j - k} \right)\overline {^i} }}{{i!}}{\nabla ^{i+n}}z\left( k \right)}+\sum\nolimits_{i = j + 1}^k {\frac{{( {j - i + 1} )\overline {^{K-n}} }}{{(K-n)!}}{\nabla ^{K + 1}}z( i )}.
\end{array}}
\end{equation}

By combining (\ref{Eq2.5}), (\ref{Eq2.12}), (\ref{Eq3.54}) and (\ref{Eq3.61}), the following result can be derived
\begin{equation}\label{Eq3.62}
\begin{array}{rl}
{}_a^{\rm{C}}\nabla _k^{\alpha,w(k)} x( k ) =&\hspace{-6pt} w^{-1}(k) {}_a^{\rm{G}}\nabla _k^{\alpha  - n}{\nabla ^n}z( k )\\
 =&\hspace{-6pt}w^{-1}(k)\sum\nolimits_{j = a + 1}^k {\frac{{( {k - j + 1} )\overline {^{ n- \alpha  - 1}} }}{{\Gamma ( { n- \alpha } )}}\nabla^n z( j )} \\
 =&\hspace{-6pt} \sum\nolimits_{i = 0}^{ K-n } { ( {\begin{smallmatrix}
\alpha-n \\
i
\end{smallmatrix}}  ) \frac{{{{ ( {k - a- i}  )}^{\overline {i - \alpha +n} }}}}{{\Gamma  ( {i - \alpha+n  + 1}  )}}{{\nabla ^{i+n,w(k)}} x ( {k}  )}} \\
  &\hspace{-6pt}- \sum\nolimits_{i = a + 2}^k {\frac{w(i)}{w(k)}{\nabla ^{K + 1,w(i)}}x\left( i \right)\sum\nolimits_{j = a + 2}^i {\frac{{\left( {k - j+2} \right)\overline {^{ n- \alpha  - 1}} }}{{\Gamma \left( { n- \alpha } \right)}}} } \frac{{\left( {j - i} \right)\overline {^{K-n}} }}{{(K-n)!}}\\
 =&\hspace{-6pt} \sum\nolimits_{i = n}^K {\left( {\begin{smallmatrix}
{\alpha  - n}\\
{i - n}
\end{smallmatrix}} \right)\frac{{\left( {k - a - i + n} \right)\overline {^{i - \alpha }} }}{{\Gamma \left( {i - \alpha  + 1} \right)}}{\nabla ^{i,w(k)}}x\left( k \right)} \\
 &\hspace{-6pt}- \sum\nolimits_{i = a + 2}^k {\frac{w(i)}{w(k)}{\nabla ^{K + 1,w(i)}}x\left( i \right)\sum\nolimits_{j = a + 2}^i {\frac{{\left( {k - j+2} \right)\overline {^{ n- \alpha  - 1}} }}{{\Gamma \left( { n- \alpha } \right)}}} } \frac{{\left( {j - i} \right)\overline {^{K-n}} }}{{(K-n)!}}.
\end{array}
\end{equation}

Till now, the proof has been completed.

\end{proof}

\textsf{Notably, $z(k)$ is expanded after substituting into the definition of nabla tempered difference/sum in Theorem \ref{Theorem3.9} while $z(k)$ is expanded before substituting into the definition of nabla tempered difference/sum in Theorem \ref{Theorem3.7}. } Similar to Definition \ref{Definition3.1}, the nabla Taylor series expanded at the future instant instead of the initial instant will be introduced.

\begin{definition} \label{Definition3.2}\cite[Definition 5]{Wei:2019CNSNSa}
For $x:\mathbb{Z}_a\to\mathbb{R}$, if the following equation holds for $k\in\mathbb{N}_{a+1}^b$
\begin{equation}\label{Eq3.63}
{\textstyle
x( k )   = \sum\nolimits_{i = 0}^{+\infty} {\frac{( {k - b} )\overline {^i} }{{i!}}[{{\nabla ^i}x( k )}]_{k=b}},}
\end{equation}
then (\ref{Eq3.62}) is called the nabla Taylor series of $x$ at $k=b$.
\end{definition}

When $i>b-k$, one has $\left( {k - b} \right)^{\overline i}=(-1)^i\frac{\Gamma(b-k+1)}{\Gamma(b-k-i+1)}=0$. Consequently, (\ref{Eq3.63}) can be simplified as $x\left( k \right) = \sum\nolimits_{i = 0}^{b-k} {\frac{\left( {k - b} \right)\overline {^i} }{{i!}}[{{\nabla ^i}x\left( k \right)}]_{k=b}}$. For convenience, it is still called the nabla Taylor series.

\begin{lemma}\label{Lemma3.3}
\cite[Lemma 7.5]{Cheng:2011Book}
For $x:\mathbb{Z}_{a}\to\mathbb{R}$, $k,j\in\mathbb{N}_{a+1}$, $k\ge j$, one has
\begin{equation}\label{Eq3.64}
{\textstyle x( j ) = \sum\nolimits_{i = 0}^{k- j} {\frac{( {j - k} )^{\overline i}}{{i!}}}{{\nabla ^i}x( k )} .}
\end{equation}
\end{lemma}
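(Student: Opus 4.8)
The plan is to recognize that the claimed identity (\ref{Eq3.64}) is precisely the finite nabla Taylor expansion of $x$ about the forward node $k$, evaluated at the backward node $j\le k$, and to establish it by a short operator computation rather than by induction on $k-j$. First I would set $m:=k-j\in\mathbb{N}$ and rewrite the coefficient in closed form using the \emph{product} definition of the rising factorial: since $(j-k)^{\overline{i}}=(j-k)(j-k+1)\cdots(j-k+i-1)=(-m)(-m+1)\cdots(-m+i-1)$, this product vanishes as soon as $i>m$ (it then contains the factor $-m+m=0$) and equals $(-1)^{i}\frac{m!}{(m-i)!}$ for $0\le i\le m$. Hence $\frac{(j-k)^{\overline{i}}}{i!}=(-1)^{i}\binom{m}{i}$, so the statement reduces to
\begin{equation*}
x(j)=\sum_{i=0}^{m}(-1)^{i}\binom{m}{i}\nabla^{i}x(k),\qquad m=k-j .
\end{equation*}
The truncation at $i=m$ is automatic because $\binom{m}{i}=0$ for $i>m$, and all arguments $x(k),x(k-1),\dots,x(k-m)=x(j)$ lie in $\mathbb{N}_{a+1}\subset\mathbb{Z}_{a}$, so every term is well defined.

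Next I would introduce the backward shift $E^{-1}$ with $E^{-1}x(k)=x(k-1)$, so that (\ref{Eq2.1}) with $n=1$ reads $\nabla x(k)=(I-E^{-1})x(k)$, i.e. $\nabla=I-E^{-1}$ as operators on sequences. Because $I$ and $E^{-1}$ commute, the binomial theorem applies termwise to yield
\begin{equation*}
\sum_{i=0}^{m}(-1)^{i}\binom{m}{i}\nabla^{i}=\sum_{i=0}^{m}\binom{m}{i}(-\nabla)^{i}=(I-\nabla)^{m}=\bigl(E^{-1}\bigr)^{m}=E^{-m}.
\end{equation*}
Applying this operator identity to $x$ at the node $k$ gives $\sum_{i=0}^{m}(-1)^{i}\binom{m}{i}\nabla^{i}x(k)=E^{-m}x(k)=x(k-m)=x(j)$, which is exactly the reduced claim and hence establishes (\ref{Eq3.64}).

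If one prefers to stay within explicit finite sums (matching the computational style used earlier), the same conclusion follows by substituting $\nabla^{i}x(k)=\sum_{l=0}^{i}(-1)^{l}\binom{i}{l}x(k-l)$ from (\ref{Eq2.1}) and interchanging the order of summation. Collecting the coefficient of $x(k-l)$ produces $\sum_{i=l}^{m}(-1)^{i+l}\binom{m}{i}\binom{i}{l}$, which by the subset-of-a-subset identity $\binom{m}{i}\binom{i}{l}=\binom{m}{l}\binom{m-l}{i-l}$ equals $\binom{m}{l}\sum_{s=0}^{m-l}(-1)^{s}\binom{m-l}{s}=\binom{m}{l}\,0^{\,m-l}$. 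This is nonzero only for $l=m$, where it equals $1$, leaving the single surviving term $x(k-m)=x(j)$.

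The computation is short along either route; the only step demanding genuine care is the first paragraph, where the base $j-k$ of the rising factorial $(j-k)^{\overline{i}}$ is a non-positive integer, so one must read it through its product form (not the na\"{i}ve $\Gamma$-quotient, which is indeterminate there) in order to extract both the vanishing for $i>m$ and the clean binomial coefficient for $i\le m$. Once that reduction is secured, the operator collapse $(I-\nabla)^{m}=E^{-m}$ (equivalently, the alternating binomial-sum collapse) finishes the proof with no remaining obstacle.
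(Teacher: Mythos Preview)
Your proof is correct and follows essentially the same route as the paper: both hinge on the operator identity $(I-\nabla)^{m}x(k)=x(k-m)$ expanded via the binomial theorem, with the only cosmetic difference being the direction of the coefficient conversion (you reduce the rising factorial to $(-1)^{i}\binom{m}{i}$ first, while the paper starts from $(I-\nabla)^{k-j}$ and rewrites the binomial coefficient as a rising factorial afterward). Your care in reading $(j-k)^{\overline{i}}$ through the product definition rather than the $\Gamma$-quotient is in fact the cleaner way to handle the non-positive integer base.
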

\begin{proof}
Defining the identity operator $I$, i.e., $Ix( k )=x( k )$, then one has $x( {k - 1} )$ $ = x( k ) - {\nabla}x( k ) = ( {I - {\nabla}} )x( k )$. In a similar way, one has
\begin{equation}\label{Eq3.65}
{\textstyle \begin{array}{rl}
x( j ) =&\hspace{-6pt} {( {I - {\nabla}} )^{k - j}}x( k )\\
 =&\hspace{-6pt} \sum\nolimits_{i = 0}^{k - j} {{{( { - 1} )}^i}\big( {\begin{smallmatrix}
{k - j}\\
i
\end{smallmatrix}} \big){\nabla ^i}x( k )} \\
 =&\hspace{-6pt} \sum\nolimits_{i = 0}^{k - j} {{{( { - 1} )}^i}\frac{( {k - j - i + 1} )\overline {^i}}{i!}}{{{\nabla ^i}x( k )}}\\
=&\hspace{-6pt} \sum\nolimits_{i = 0}^{k - j} {{{( { - 1} )}^i}\frac{\Gamma( {k - j + 1} )}{i!\Gamma( {k - j - i + 1} )}}{{{\nabla ^i}x( k )}}\\
=&\hspace{-6pt} \sum\nolimits_{i = 0}^{k - j} {\frac{\Gamma( {j-k+i} )}{i!\Gamma( {j-k} )}}{{{\nabla ^i}x( k )}}\\
=&\hspace{-6pt} \sum\nolimits_{i = 0}^{k - j} {\frac{( {j-k} )\overline {^i}}{i!}}{{{\nabla ^i}x( k )}},
\end{array}}
\end{equation}
which completes the proof.
\end{proof}

Lemma \ref{Lemma3.3} shows that $x:\mathbb{Z}_{a}\to\mathbb{R}$ can be expanded as a nabla Taylor series at $j=k$ like (\ref{Eq3.63}) without strict conditions.

\begin{theorem}\label{Theorem3.10}
For any $x:\mathbb{Z}_{a}\to\mathbb{R}$, $\alpha\in\mathbb{R}\backslash\mathbb{Z}_+$, $w: \mathbb{N}_{a+1} \to \mathbb{R} \backslash\{0\}$, $k\in\mathbb{N}_{a+1}$, $a\in\mathbb{R}$, one has
\begin{equation}\label{Eq3.66}
\textstyle{}  {} _{a}^{\rm G}{\nabla}_{k}^{\alpha,w(k)} x(k)  = \sum\nolimits_{i = 0}^{ k-a-1 } { \big( {\begin{smallmatrix}
\alpha \\
i
\end{smallmatrix}}  \big) \frac{{{{ ( {k - a- i}  )}^{\overline {i - \alpha } }}}}{{\Gamma  ( {i - \alpha  + 1}  )}}{{\nabla ^{i,w(k)}} x ( {k}  )}}.
\end{equation}
For any $x:\mathbb{Z}_{a}\to\mathbb{R}$, $\alpha\in(n-1,n)$, $n\in\mathbb{Z}_+$, $w: \mathbb{N}_{a+1} \to \mathbb{R} \backslash\{0\}$, $k\in\mathbb{N}_{a+1}$, $a\in\mathbb{R}$, one has
\begin{equation}\label{Eq3.67}
{\textstyle
{}_a^{\rm R}\nabla _k^{\alpha,w(k)} x(k) = \sum\nolimits_{i = 0}^{k - a - 1} { \big( {\begin{smallmatrix}
\alpha \\
i
\end{smallmatrix}}  \big)\frac{ ( {k- a - i}  )^{\overline {i - \alpha } }}{{\Gamma  ( {i - \alpha  + 1}  )}}{{{\nabla ^{i,w(k)}}x(k)}}}.
}
\end{equation}
For any $x:\mathbb{Z}_{a}\to\mathbb{R}$, $\alpha\in(n-1,n)$, $n\in\mathbb{Z}_+$, $w: \mathbb{N}_{a+1} \to \mathbb{R} \backslash\{0\}$, $k\in\mathbb{N}_{a+1}$, $a\in\mathbb{R}$, one has
\begin{equation}\label{Eq3.68}
{\textstyle
{}_a^{\rm C}\nabla _{k}^{\alpha,w(k)} x (k  ) = \sum\nolimits_{i = n}^{k - a - 1+n} { \big( {\begin{smallmatrix}
{\alpha  - n}\\
{i - n}
\end{smallmatrix}}  \big)\frac{ ( {k - a - i + n}  )^{\overline {i - \alpha } }}{{\Gamma  ( {i - \alpha  + 1}  )}}{{{\nabla ^{i,w(k)}}x(k)}}}.}
\end{equation}
\end{theorem}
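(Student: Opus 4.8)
The plan is to reduce all three identities to the Grünwald--Letnikov case (\ref{Eq3.66}) and to obtain that case by substituting the \emph{exact} finite expansion of Lemma \ref{Lemma3.3} into the definition, in place of the truncated Taylor formula with remainder used in Theorem \ref{Theorem3.7} and Theorem \ref{Theorem3.9}. This is the crucial structural observation: expanding $z(j):=w(j)x(j)$ about the \emph{future} point $k$ terminates exactly, because $\left(j-k\right)^{\overline{i}}=0$ whenever $i>k-j$, so no remainder term survives and the expansion is valid for arbitrary $x:\mathbb{Z}_a\to\mathbb{R}$ without any analyticity or convergence hypothesis. This is precisely why Theorem \ref{Theorem3.10} needs no condition beyond $w(k)\neq0$, in contrast to the series results of Theorem \ref{Theorem3.8}.

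First I would write, directly from (\ref{Eq2.9}) and (\ref{Eq2.3}),
\begin{equation*}
{}_a^{\rm G}\nabla_k^{\alpha,w(k)}x(k)=w^{-1}(k)\sum\nolimits_{j=a+1}^k\frac{\left(k-j+1\right)^{\overline{-\alpha-1}}}{\Gamma(-\alpha)}z(j),
\end{equation*}
and then replace $z(j)$ by its Lemma \ref{Lemma3.3} expansion $z(j)=\sum_{i=0}^{k-j}\frac{\left(j-k\right)^{\overline{i}}}{i!}\nabla^i z(k)$. Interchanging the two finite sums, the inner index $i$ runs from $0$ up to its largest admissible value $k-a-1$ (attained at $j=a+1$). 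Because $\left(j-k\right)^{\overline{i}}$ vanishes for $j>k-i$, the inner $j$-sum may be harmlessly extended back to $j=k$, so for each fixed $i$ it reduces to the telescoping sum already evaluated in the proof of (\ref{Eq3.59}), giving the coefficient $\binom{\alpha}{i}\frac{\left(k-a-i\right)^{\overline{i-\alpha}}}{\Gamma(i-\alpha+1)}\nabla^{i,w(k)}x(k)$. Summing over $i$ yields (\ref{Eq3.66}).

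The Riemann--Liouville identity (\ref{Eq3.67}) is then immediate from Theorem \ref{Theorem3.1}, which gives ${}_a^{\rm R}\nabla_k^{\alpha,w(k)}x(k)={}_a^{\rm G}\nabla_k^{\alpha,w(k)}x(k)$. For the Caputo identity (\ref{Eq3.68}) I would start from (\ref{Eq2.14}), write ${}_a^{\rm C}\nabla_k^{\alpha,w(k)}x(k)=w^{-1}(k){}_a^{\rm G}\nabla_k^{\alpha-n}\nabla^n z(k)$, and apply Lemma \ref{Lemma3.3} to the function $\nabla^n z$ to obtain $\nabla^n z(j)=\sum_{i=0}^{k-j}\frac{\left(j-k\right)^{\overline{i}}}{i!}\nabla^{i+n}z(k)$. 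Repeating the interchange-and-telescope argument with $\alpha$ replaced by $\alpha-n$ produces the coefficient $\binom{\alpha-n}{i}\frac{\left(k-a-i\right)^{\overline{i-\alpha+n}}}{\Gamma(i-\alpha+n+1)}\nabla^{i+n,w(k)}x(k)$; the reindexing $i\mapsto i+n$ then recasts the sum over the range $\{n,\dots,k-a-1+n\}$ in exactly the form (\ref{Eq3.68}).

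The main obstacle is the bookkeeping in the interchange of summation: one must verify both that the outer index truncates at $k-a-1$ (and, for the Caputo case, at $k-a-1+n$ after reindexing) and that extending each inner $j$-sum back to $j=k$ introduces only vanishing rising-factorial terms, so that the per-$i$ coefficient coincides with the sum already computed in (\ref{Eq3.59}). Once this is organized, the remainder is a direct reuse of the telescoping identity $-\nabla\frac{\left(k-j-i\right)^{\overline{i-\alpha}}}{\Gamma(i-\alpha+1)}=\frac{\left(k-j+1-i\right)^{\overline{i-\alpha-1}}}{\Gamma(i-\alpha)}$ together with evaluation of the boundary terms, exactly as in the derivation of (\ref{Eq3.59}).
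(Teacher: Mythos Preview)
Your proposal is correct and follows essentially the same approach as the paper: substitute the exact Lemma \ref{Lemma3.3} expansion of $z(j)$ into the definition, interchange the finite sums, telescope the inner $j$-sum, then deduce (\ref{Eq3.67}) from Theorem \ref{Theorem3.1} and (\ref{Eq3.68}) by applying the same argument to $\nabla^n z$ and reindexing. The only cosmetic difference is that the paper keeps the inner upper limit at $k-i$ after the interchange and telescopes directly (getting boundary term $0^{\overline{i-\alpha}}/\Gamma(i-\alpha+1)=0$), whereas you extend back to $j=k$ and reuse the computation from (\ref{Eq3.59}) (getting boundary term $(-i)^{\overline{i-\alpha}}/\Gamma(i-\alpha+1)=0$); both vanish, so the two routes coincide.
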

\begin{proof}
Letting $z(k):=w(k)x(k)$, Lemma \ref{Lemma3.3} gives
\begin{equation}\label{Eq3.69}
{\textstyle
z(j)=\sum\nolimits_{i = 0}^{k - j} {\frac{{{{\left( {j - k} \right)}^{\overline i}}}}{{i!}}{\nabla ^i}z\left( k \right)}.}
\end{equation}

By using $\sum\nolimits_{j = a + 1}^k {\sum\nolimits_{i = 0}^{k - j} {} }  = \sum\nolimits_{i = 0}^{k - a - 1} {\sum\nolimits_{j = a + 1}^{k - i} {} } $, $\Gamma (\theta )\Gamma (1 - \theta ) = \frac{\pi }{{\sin (\pi \theta )}}$ and the basic definition, it follows
\begin{equation}\label{Eq3.70}
\begin{array}{rl}
{}_a^{\rm{G}}\nabla _k^{\alpha,w(k)} x\left( k \right) =&\hspace{-6pt}w^{-1}(k){}_a^{\rm{G}}\nabla _k^\alpha z\left( k \right)\\
 =&\hspace{-6pt}w^{-1}(k) \sum\nolimits_{j = a + 1}^k {\frac{{{{\left( {k - j + 1} \right)}^{\overline { - \alpha  - 1} }}}}{{\Gamma \left( { - \alpha } \right)}}\sum\nolimits_{i = 0}^{k - j} {\frac{{{{\left( {j - k} \right)}^{\overline i}}}}{{i!}}{\nabla ^i}z\left( k \right)} } \\
 =&\hspace{-6pt}w^{-1}(k) \sum\nolimits_{i = 0}^{k - a - 1} {\frac{1}{{i!\Gamma \left( { - \alpha } \right)}}{\nabla ^i}z\left( k \right)\sum\nolimits_{j = a + 1}^{k - i} {{{\left( {j - k} \right)}^{\overline i}}{{\left( {k - j + 1} \right)}^{\overline { - \alpha  - 1} }}} } \\
 =&\hspace{-6pt}\sum\nolimits_{i = 0}^{k - a - 1} {\frac{1}{{i!\Gamma \left( { - \alpha } \right)}}{\nabla ^{i,w(k)}}x\left( k \right)\sum\nolimits_{j = a + 1}^{k - i} {\frac{{\Gamma \left( {j - k + i} \right)}}{{\Gamma \left( {j - k} \right)}}\frac{{\Gamma \left( {k - j - \alpha } \right)}}{{\Gamma \left( {k - j + 1} \right)}}} } \\
 =&\hspace{-6pt}\sum\nolimits_{i = 0}^{k - a - 1} {\frac{{{{( - 1)}^i}}}{{i!\Gamma \left( { - \alpha } \right)}}{\nabla ^{i,w(k)}}x\left( k \right)\sum\nolimits_{j = a + 1}^{k - i} {\frac{{\Gamma \left( {k - j - \alpha } \right)}}{{\Gamma \left( {1 - j + k - i} \right)}}} } \\
 =&\hspace{-6pt}\sum\nolimits_{i = 0}^{k - a - 1} {\big( {\begin{smallmatrix}
\alpha \\
i
\end{smallmatrix}} \big){\nabla ^{i,w(k)}}z\left( k \right)\sum\nolimits_{j = a + 1}^{k - i} {\frac{{\left( {k - j + 1 - i} \right)\overline {^{i - \alpha  - 1}} }}{{\Gamma \left( {i - \alpha } \right)}}} } \\
 =&\hspace{-6pt}-\sum\nolimits_{i = 0}^{k - a - 1} {\big( {\begin{smallmatrix}
\alpha \\
i
\end{smallmatrix}} \big){\nabla ^{i,w(k)}}x\left( k \right)\sum\nolimits_{j = a + 1}^{k - i} {\nabla \frac{{\left( {k - j - i} \right)\overline {^{i - \alpha }} }}{{\Gamma \left( {i - \alpha  + 1} \right)}}} } \\
 =&\hspace{-6pt}\sum\nolimits_{i = 0}^{k - a - 1} {\big( {\begin{smallmatrix}
\alpha \\
i
\end{smallmatrix}} \big){\nabla ^{i,w(k)}}x\left( k \right)\big[\frac{{\left( {k - a - i} \right)\overline {^{i - \alpha }} }}{{\Gamma \left( {i - \alpha  + 1} \right)}} - \frac{{0\overline {^{i - \alpha }} }}{{\Gamma \left( {i - \alpha  + 1} \right)}}\big]} \\
 =&\hspace{-6pt}\sum\nolimits_{i = 0}^{k - a - 1} {\big( {\begin{smallmatrix}
\alpha \\
i
\end{smallmatrix}} \big)\frac{{\left( {k - a - i} \right)\overline {^{i - \alpha }} }}{{\Gamma \left( {i - \alpha  + 1} \right)}}{\nabla ^{i,w(k)}}x\left( k \right)}.
\end{array}
\end{equation}
With the help of ${}_a^{\rm{R}}\nabla _k^{\alpha,w(k)} x\left( k \right) ={}_a^{\rm{G}}\nabla _k^{\alpha,w(k)} x\left( k \right) $ in Theorem \ref{Theorem3.1}, the result in (\ref{Eq3.64}) can be derived for any $\alpha\in(n-1,n)$, $n\in\mathbb{Z}_+$.

From the definition of Caputo tempered fractional difference and the proved result in (\ref{Eq3.67}), it follows
\begin{equation}\label{Eq3.71}
\begin{array}{rl}
{}_a^{\rm{C}}\nabla _k^{\alpha ,w(k)}x\left( k \right) =&\hspace{-6pt}{}_a^{\rm{G}}\nabla _k^{\alpha  - n,w(k)}{\nabla ^{n,w(k)}}x\left( k \right)\\
 =&\hspace{-6pt} \sum\nolimits_{i = 0}^{k-a-1} {\big( {\begin{smallmatrix}
{\alpha  - n}\\
i
\end{smallmatrix}} \big)\frac{{\left( {k - a - i} \right)\overline {^{i - \alpha  + n}} }}{{\Gamma \left( {i - \alpha  + n + 1} \right)}}{\nabla ^{i + n,w(k)}}x\left( k \right)} \\
 =&\hspace{-6pt} \sum\nolimits_{i = n}^{k-a-1+n} {\big( {\begin{smallmatrix}
{\alpha  - n}\\
{i - n}
\end{smallmatrix}} \big)\frac{{\left( {k - a - i + n} \right)\overline {^{i - \alpha }} }}{{\Gamma \left( {i - \alpha  + 1} \right)}}{\nabla ^{i,w(k)}}x\left( k \right)} .
\end{array}
\end{equation}

The proof completes here.
\end{proof}

Similar to (\ref{Eq3.69}), one has $z(k - j) = \sum\nolimits_{i = 0}^j {\frac{{{{( { - j} )}^{\overline i}}}}{{i!}}{\nabla ^i}z( k )} $. Along this way, one has
\begin{equation}\label{Eq3.72}
{\textstyle
\begin{array}{rl}
{\nabla ^{n,w(k)}}x(k) =&\hspace{-6pt} {w^{ - 1}}(k){\nabla ^n}z(k)\\
 =&\hspace{-6pt} {w^{ - 1}}(k)\sum\nolimits_{j = 0}^n {{{( - 1)}^j}} \big(\begin{smallmatrix}
n\\
j
\end{smallmatrix}\big)z(k - j)\\
 =&\hspace{-6pt} {w^{ - 1}}(k)\sum\nolimits_{j = 0}^n {{{( - 1)}^j}} \big(\begin{smallmatrix}
n\\
j
\end{smallmatrix}\big)\sum\nolimits_{i = 0}^j {\frac{{{{\left( { - j} \right)}^{\overline i}}}}{{i!}}{\nabla ^i}z(k)} \\
 =&\hspace{-6pt} \sum\nolimits_{i = 0}^n {{{{\nabla ^{i,w(k)}}x(k)}}} \sum\nolimits_{j = i}^n {( { - 1} )^{j}}\big(\begin{smallmatrix}
n\\
j
\end{smallmatrix}\big)\frac{{{{( { - j} )}^{\overline i}}}}{{i!}}\\
 =&\hspace{-6pt} \sum\nolimits_{i = 0}^n {\big(\begin{smallmatrix}
n\\
i
\end{smallmatrix}\big){\nabla ^{i,w(k)}}x(k)} \sum\nolimits_{j = i}^n {\frac{{(1 - i + j)\overline {^{i- n - 1}} }}{{\Gamma (i - n)}}} \\
 =&\hspace{-6pt} \sum\nolimits_{i = 0}^n {\big(\begin{smallmatrix}
n\\
i
\end{smallmatrix}\big){\nabla ^{i,w(k)}}x(k)} \sum\nolimits_{j = i}^n {\nabla \frac{{(1 - i + j)\overline {^{i - n}} }}{{\Gamma (i - n + 1)}}} \\
 =&\hspace{-6pt} \sum\nolimits_{i = 0}^n {\big(\begin{smallmatrix}
n\\
i
\end{smallmatrix}\big){\nabla ^{i,w(k)}}x(k)} \big[ {\frac{{(1 - i + n)\overline {^{i - n}} }}{{\Gamma (i - n + 1)}} - \frac{{0\overline {^{i - n}} }}{{\Gamma (i - n + 1)}}} \big]\\
 =&\hspace{-6pt} {\nabla ^{n,w(k)}}x(k),
\end{array}}
\end{equation}
which means that similar representation in (\ref{Eq3.48}) does not hold. The result like (\ref{Eq3.36}) can also be discussed in a similar way. It is the main reason that the expansion of $\nabla^{n,w(k)}x(k)$ is considered in Theorem \ref{Theorem3.7} and Theorem \ref{Theorem3.8} while not discussed in Theorem \ref{Theorem3.9} and Theorem \ref{Theorem3.10}.

\begin{theorem}\label{Theorem3.11}
For any $f,g:\mathbb{Z}_a\to\mathbb{R}$,  $n \in\mathbb{Z}_+$, $w: \mathbb{N}_{a+1} \to \mathbb{R} \backslash\{0\}$, $k\in\mathbb{N}_{a+1}$, one has
\begin{equation}\label{Eq3.73}
{\textstyle
\nabla^{n,w(k)}  \{ {f(k)g(k)}  \} = \sum\nolimits_{i= 0}^{n} {\big( {\begin{smallmatrix}
n \\
i
\end{smallmatrix}} \big){\nabla ^{i,w(k)}}f(k)\nabla^{n  - i}g ( {k - i}  )} .}
\end{equation}
For any $\alpha \in\mathbb{R}\backslash\mathbb{Z}_+$, $w: \mathbb{N}_{a+1} \to \mathbb{R} \backslash\{0\}$, $k\in\mathbb{N}_{a+1}$, $a\in\mathbb{R}$, one has
\begin{equation}\label{Eq3.74}
{\textstyle
{}_a^{\rm G}\nabla_k^{\alpha,w(k)}  \{ {f(k)g(k)}  \} = \sum\nolimits_{i = 0}^{k - a - 1} {\big( {\begin{smallmatrix}
\alpha \\
i
\end{smallmatrix}} \big){\nabla ^{i,w(k)}}f(k){}_a^{\rm G}\nabla_{k-i}^{\alpha  - i}g ( {k - i}  )} .}
\end{equation}
For any $\alpha \in(n-1,n)$, $n\in\mathbb{Z}_+$, $w: \mathbb{N}_{a+1} \to \mathbb{R} \backslash\{0\}$, $k\in\mathbb{N}_{a+1}$, $a\in\mathbb{R}$, one has
\begin{equation}\label{Eq3.75}
{\textstyle
{}_a^{\rm R}\nabla_k^{\alpha,w(k)}  \{ {f(k)g(k)}  \} = \sum\nolimits_{i = 0}^{k - a - 1} {\big( {\begin{smallmatrix}
\alpha \\
i
\end{smallmatrix}} \big){\nabla ^{i,w(k)}}f(k){}_a^{\rm G}\nabla_{k-i}^{\alpha  - i}g ( {k - i}  )}.}
\end{equation}
For any $\alpha \in(n-1,n)$, $n\in\mathbb{Z}_+$, $w: \mathbb{N}_{a+1} \to \mathbb{R} \backslash\{0\}$, $k\in\mathbb{N}_{a+1}$, $a\in\mathbb{R}$, one has
\begin{equation}\label{Eq3.76}
{\textstyle
\begin{array}{l}
{}_a^{\rm C}\nabla_k^{\alpha,w(k)}  \{ {f(k)g(k)}  \} =\sum\nolimits_{i = 0}^{k - a - 1} {\big( {\begin{smallmatrix}
\alpha \\
i
\end{smallmatrix}} \big){\nabla ^{i,w(k)}}f(k){}_a^{\rm G}\nabla_{k-i}^{\alpha  - i}g ( {k - i}  )}-R,
\end{array}}
\end{equation}
where $R=\sum\nolimits_{j = 0}^{n - 1} {\sum\nolimits_{i = j}^{n-1}{\big( {\begin{smallmatrix}
i\\
j
\end{smallmatrix}} \big)} \frac{( {k - a} )\overline {^{i - \alpha }}}{{\Gamma ( {i - \alpha  + 1} )}} \frac{w(a)}{w(k)}{{[{\nabla ^{j,w(k)}}f( k ){\nabla ^{i - j}}g( {k-j} )]_{k=a}}}}$.
\end{theorem}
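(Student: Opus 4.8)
\emph{The plan} is to reduce each tempered formula to its classical ($w\equiv 1$) counterpart through the substitution $z_f(k):=w(k)f(k)$, and to obtain the two classical product rules from a single core computation. First I would record the classical Leibniz rules
\[
\nabla^{n}[u(k)v(k)]=\sum\nolimits_{i=0}^{n}\binom{n}{i}\nabla^{i}u(k)\,\nabla^{n-i}v(k-i),\qquad {}_a^{\rm G}\nabla_k^{\alpha}[u(k)v(k)]=\sum\nolimits_{i=0}^{k-a-1}\binom{\alpha}{i}\nabla^{i}u(k)\,{}_a^{\rm G}\nabla_{k-i}^{\alpha-i}v(k-i),
\]
and prove them together. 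Starting from the Gr\"{u}nwald--Letnikov definition (\ref{Eq2.2}), ${}_a^{\rm G}\nabla_k^{\alpha}[uv](k)=\sum_{m=0}^{k-a-1}(-1)^m\binom{\alpha}{m}u(k-m)v(k-m)$, I would expand $u(k-m)$ in its backward differences at $k$ via Lemma \ref{Lemma3.3} with $j=k-m$, which gives $u(k-m)=\sum_{i=0}^{m}(-1)^i\binom{m}{i}\nabla^{i}u(k)$.

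Next I would interchange the order of summation, $\sum_{m=0}^{k-a-1}\sum_{i=0}^{m}=\sum_{i=0}^{k-a-1}\sum_{m=i}^{k-a-1}$, collapse the inner sum using the subset-of-subset identity $\binom{\alpha}{m}\binom{m}{i}=\binom{\alpha}{i}\binom{\alpha-i}{m-i}$, and re-index by $l=m-i$ so that the inner sum $\sum_{l=0}^{(k-i)-a-1}(-1)^{l}\binom{\alpha-i}{l}v(k-i-l)$ is recognized as ${}_a^{\rm G}\nabla_{k-i}^{\alpha-i}v(k-i)$; the integer rule follows by the identical manipulation starting from (\ref{Eq2.1}). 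With both classical rules in hand, I would put $u=z_f=w f$, $v=g$, multiply by $w^{-1}(k)$, and use the definition (\ref{Eq2.10}) to turn $w^{-1}(k)\nabla^{i}[w(k)f(k)]$ into $\nabla^{i,w(k)}f(k)$; since $v=g$ carries no tempering factor, the $g$-part stays untempered, yielding (\ref{Eq3.73}) and (\ref{Eq3.74}) at once.

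For the Riemann--Liouville identity (\ref{Eq3.75}) I would simply invoke (\ref{Eq3.1}) of Theorem \ref{Theorem3.1}, i.e.\ ${}_a^{\rm R}\nabla_k^{\alpha,w(k)}={}_a^{\rm G}\nabla_k^{\alpha,w(k)}$ on $(n-1,n)$, so it coincides with (\ref{Eq3.74}). For the Caputo identity (\ref{Eq3.76}) I would start from (\ref{Eq3.2}), rewritten as ${}_a^{\rm C}\nabla_k^{\alpha,w(k)}[fg]={}_a^{\rm R}\nabla_k^{\alpha,w(k)}[fg]-\sum_{i=0}^{n-1}\frac{(k-a)^{\overline{i-\alpha}}}{\Gamma(i-\alpha+1)}\frac{w(a)}{w(k)}[\nabla^{i,w(k)}(fg)]_{k=a}$; substituting the already-proven integer rule (\ref{Eq3.73}) for $\nabla^{i,w(k)}\{f(k)g(k)\}$ inside the bracket and swapping the resulting double sum $\sum_{i=0}^{n-1}\sum_{j=0}^{i}=\sum_{j=0}^{n-1}\sum_{i=j}^{n-1}$ produces exactly the remainder $R$.

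The step I expect to be the main obstacle is the core classical fractional identity, not the tempering reduction, which is purely formal. The delicate points are verifying $\binom{\alpha}{m}\binom{m}{i}=\binom{\alpha}{i}\binom{\alpha-i}{m-i}$ for non-integer $\alpha$ (it holds as an identity of Gamma-function ratios, or by analytic continuation from the integer case where it is the standard subset identity) and ensuring the finite truncation limits align after re-indexing, so that the shifted operator ${}_a^{\rm G}\nabla_{k-i}^{\alpha-i}$ emerges with the correct upper bound $(k-i)-a-1$. Once this bookkeeping is settled, all four formulas follow.
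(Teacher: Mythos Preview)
Your proposal is correct, and for parts (\ref{Eq3.73}), (\ref{Eq3.75}), (\ref{Eq3.76}) it matches the paper exactly: the paper also cites the classical integer Leibniz rule and lifts it via $u=wf$, invokes ${}_a^{\rm R}\nabla_k^{\alpha,w(k)}={}_a^{\rm G}\nabla_k^{\alpha,w(k)}$ from Theorem~\ref{Theorem3.1} for (\ref{Eq3.75}), and obtains (\ref{Eq3.76}) from (\ref{Eq3.2}) by inserting (\ref{Eq3.73}) and swapping $\sum_{i=0}^{n-1}\sum_{j=0}^{i}=\sum_{j=0}^{n-1}\sum_{i=j}^{n-1}$.

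The one genuine difference is in the derivation of the fractional rule (\ref{Eq3.74}). You attack it \emph{directly} from the defining sum ${}_a^{\rm G}\nabla_k^{\alpha}[uv](k)=\sum_m(-1)^m\binom{\alpha}{m}u(k-m)v(k-m)$, expand $u(k-m)$ by Lemma~\ref{Lemma3.3}, and use $\binom{\alpha}{m}\binom{m}{i}=\binom{\alpha}{i}\binom{\alpha-i}{m-i}$ to peel off ${}_a^{\rm G}\nabla_{k-i}^{\alpha-i}v(k-i)$. The paper instead goes through its own Theorem~\ref{Theorem3.10}: it first writes ${}_a^{\rm G}\nabla_k^{\alpha,w(k)}\{fg\}=\sum_{j}\binom{\alpha}{j}\frac{(k-a-j)^{\overline{j-\alpha}}}{\Gamma(j-\alpha+1)}\nabla^{j,w(k)}\{fg\}$ via (\ref{Eq3.66}), then expands each $\nabla^{j,w(k)}\{fg\}$ with the already-proved integer rule (\ref{Eq3.73}), and finally regroups with the same binomial identity, recognizing the inner sum as another instance of (\ref{Eq3.66}). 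Both routes ultimately rest on Lemma~\ref{Lemma3.3} and the same subset-of-subset identity; your path is more self-contained (it does not need Theorem~\ref{Theorem3.10} as an intermediate result), while the paper's path showcases (\ref{Eq3.66}) as a reusable series representation and keeps the sign bookkeeping hidden inside that earlier theorem.
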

\begin{proof}
By using the nabla Leibniz rule in \cite[Theorem 7.1]{Cheng:2011Book}, one has
\begin{equation}\label{Eq3.77}
{\textstyle
\begin{array}{rl}
{\nabla ^{n,w(k)}}\{ f(k)g(k)\}  =&\hspace{-6pt} {w^{ - 1}}(k){\nabla ^n}\{ w(k)f(k)g(k)\} \\
 =&\hspace{-6pt} {w^{ - 1}}(k)\sum\nolimits_{i = 0}^n {\big(\begin{smallmatrix}
n\\
i
\end{smallmatrix}\big){\nabla ^i}\{ w(k)f(k)\} {\nabla ^{n - i}}g(k - i)} \\
 =&\hspace{-6pt} \sum\nolimits_{i = 0}^n {\big(\begin{smallmatrix}
n\\
i
\end{smallmatrix}\big){\nabla ^{i,w(k)}}f(k){\nabla ^{n - i}}g(k - i)},
\end{array}}
\end{equation}
which is just (\ref{Eq3.73}).

By using (\ref{Eq3.66}) and (\ref{Eq3.73}), one has
\begin{equation}\label{Eq3.78}
{\textstyle
\begin{array}{rl}
{}_a^{\rm G}\nabla _k^{\alpha,w(k) }\{ {f( k )g( k )} \}=&\hspace{-6pt} \sum\nolimits_{j = 0}^{k - a - 1} {\big( {\begin{smallmatrix}
\alpha \\
j
\end{smallmatrix}} \big)\frac{{( {k - a - j} )\overline {^{j- \alpha }} }}{{\Gamma ( {j - \alpha  + 1} )}}{\nabla ^{j,w(k)}}f(k)g(k)}\\
 =&\hspace{-6pt} \sum\nolimits_{j = 0}^{k - a - 1} {\big( {\begin{smallmatrix}
{ \alpha }\\
j
\end{smallmatrix}} \big)\frac{{( {k - a - j} )\overline {^{j - \alpha }} }}{{\Gamma ( {j - \alpha  + 1} )}}\sum\nolimits_{i = 0}^j {\big( {\begin{smallmatrix}
j\\
i
\end{smallmatrix}} \big){\nabla ^{i,w(k)}}f( k ){\nabla ^{j - i}}g( {k - i} )} } .
\end{array}}
\end{equation}

By applying $\sum\nolimits_{j = 0}^{k - a - 1} {\sum\nolimits_{i = 0}^j {} }  = \sum\nolimits_{i = 0}^{k - a - 1} {\sum\nolimits_{j = i}^{k - a - 1} {} } $ and $\big( {\begin{smallmatrix}
{ \alpha }\\
{j + i}
\end{smallmatrix}} \big)\big( {\begin{smallmatrix}
{j + i}\\
i
\end{smallmatrix}} \big) = \big( {\begin{smallmatrix}
{ \alpha }\\
i
\end{smallmatrix}} \big)\big( {\begin{smallmatrix}
{ \alpha  - i}\\
j
\end{smallmatrix}} \big)$, (\ref{Eq3.78}) can be expressed as
\begin{equation}\label{Eq3.79}
{\textstyle
\begin{array}{rl}
{}_a^{\rm G}\nabla _k^{\alpha,w(k) }\left\{ {f( k )g( k )} \right\} =&\hspace{-6pt} \sum\nolimits_{i = 0}^{k - a - 1} {\sum\nolimits_{j = i}^{k - a - 1} {\big( {\begin{smallmatrix}
{ \alpha }\\
j
\end{smallmatrix}} \big)\big( {\begin{smallmatrix}
j\\
i
\end{smallmatrix}} \big){\nabla ^{i,w(k)}}f\left( k \right){\nabla ^{j - i}}g( {k - i} )\frac{{( {k - a - j} )\overline {^{j - \alpha }} }}{{\Gamma ( {j -\alpha  + 1} )}}} } \\
 =&\hspace{-6pt} \sum\nolimits_{i = 0}^{k - a - 1} {\sum\nolimits_{j = 0}^{k - a - 1 - i} {\big( {\begin{smallmatrix}
{ \alpha }\\
{j + i}
\end{smallmatrix}} \big)\big( {\begin{smallmatrix}
{j + i}\\
i
\end{smallmatrix}} \big){\nabla ^{i,w(k)}}f( k ){\nabla ^j}g( {k - i} )\frac{{( {k - a - j - i} )\overline {^{j + i - \alpha }} }}{{\Gamma ( {j + i - \alpha  + 1} )}}} }\\
 =&\hspace{-6pt} \sum\nolimits_{i = 0}^{k - a - 1} {\big( {\begin{smallmatrix}
{ \alpha }\\
i
\end{smallmatrix}} \big){\nabla ^{i,w(k)}}f( k )\sum\nolimits_{j = 0}^{k - i - a - 1} {\big( {\begin{smallmatrix}
{\alpha  - i}\\
j
\end{smallmatrix}} \big){\nabla ^j}g( {k - i} )\frac{{( {k - a - j - i} )\overline {^{j + i- \alpha }} }}{{\Gamma ( {j + i - \alpha  + 1} )}}} } \\
 =&\hspace{-6pt} \sum\nolimits_{i = 0}^{k - a - 1} {\big( {\begin{smallmatrix}
{\alpha }\\
i
\end{smallmatrix}} \big){\nabla ^{i,w(k)}}f( k ){}_a^{\rm G}\nabla _{k-i}^{ \alpha  - i}g( {k - i} )}.
\end{array}}
\end{equation}

Since (\ref{Eq3.66}) and (\ref{Eq3.67}) are similar for $\alpha\in(n-1,n)$, it is not difficult to derive (\ref{Eq3.75}) like (\ref{Eq3.74}).

By using (\ref{Eq3.2}), one has
\begin{equation}\label{Eq3.80}
{\textstyle
\begin{array}{rl}
{}_a^{\rm C}\nabla_k^{\alpha,w(k)} \{ {f( k )g( k )} \} =&\hspace{-6pt}{}_a^{\rm R}\nabla_k^{\alpha,w(k)} \{ {f( k )g( k )} \} -\sum\nolimits_{i = 0}^{n - 1} {\frac{( {k - a } )\overline {^{i - \alpha }}}{{\Gamma ( {i - \alpha  + 1} )}} \frac{w(a)}{w(k)}{ {[ {\nabla^{i,w(k)}\{ {f( k )g( k )} \}} ]}_{k = a}}} \\
 =&\hspace{-6pt} \sum\nolimits_{i = 0}^{k - a - 1} {\big( {\begin{smallmatrix}
{\alpha }\\
i
\end{smallmatrix}} \big){\nabla ^{i,w(k)}}f( k ){}_a^{\rm G}\nabla _{k-i}^{ \alpha  - i}g( {k - i} )}\\
&\hspace{-6pt} - \sum\nolimits_{i = 0}^{n - 1} {\sum\nolimits_{j = 0}^i {\big( {\begin{smallmatrix}
i\\
j
\end{smallmatrix}} \big)} \frac{( {k - a} )\overline {^{i - \alpha }}}{{\Gamma ( {i - \alpha  + 1} )}} \frac{w(a)}{w(k)}{{[{\nabla ^{j,w(k)}}f( k ){\nabla ^{i - j}}g( {k-j} )]_{k=a}}}} \\
 = &\hspace{-6pt}\sum\nolimits_{i = 0}^{k - a - 1} {\big( {\begin{smallmatrix}
{\alpha }\\
i
\end{smallmatrix}} \big){\nabla ^{i,w(k)}}f( k ){}_a^{\rm G}\nabla _{k-i}^{ \alpha  - i}g( {k - i} )}\\
&\hspace{-6pt} - \sum\nolimits_{j = 0}^{n - 1} {\sum\nolimits_{i = j}^{n-1}{\big( {\begin{smallmatrix}
i\\
j
\end{smallmatrix}} \big)} \frac{( {k - a} )\overline {^{i - \alpha }}}{{\Gamma ( {i - \alpha  + 1} )}} \frac{w(a)}{w(k)}{{[{\nabla ^{j,w(k)}}f( k ){\nabla ^{i - j}}g( {k-j} )]_{k=a}}}},
\end{array}}
\end{equation}
which confirms the correctness of (\ref{Eq3.76}).
\end{proof}

\begin{theorem}\label{Theorem3.12}
For any $x: \mathbb{N}_{a+1-n} \to \mathbb{R}$, $\alpha\in(n - 1,n)$, $n\in\mathbb{Z}_+$, $k\in\mathbb{N}_{a+1}$, $a\in\mathbb{R}$, $w: \mathbb{N}_{a+1} \to \mathbb{R} \backslash\{0\}$, one has
\begin{equation}\label{Eq3.81}
{\textstyle
\mathop {\lim }\limits_{k \to  + \infty } {[{}_a^{\rm R}\nabla _k^{\alpha ,w(k)}x(k) - {}_a^{\rm{C}}\nabla _k^{\alpha ,w(k)}x(k)]}=0,}
\end{equation}
\begin{equation}\label{Eq3.82}
{\textstyle
\mathop {\lim }\limits_{a \to  - \infty } {[{}_a^{\rm R}\nabla _k^{\alpha ,w(k)}x(k) - {}_a^{\rm{C}}\nabla _k^{\alpha ,w(k)}x(k)]}=0.}
\end{equation}
\end{theorem}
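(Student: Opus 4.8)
The plan is to reduce everything to the exact identity already proved in Theorem~\ref{Theorem3.1}. Equation~(\ref{Eq3.2}) states that the Riemann--Liouville and Caputo tempered differences differ precisely by a finite sum of boundary terms, so
\[
{}_a^{\rm R}\nabla_k^{\alpha,w(k)}x(k) - {}_a^{\rm C}\nabla_k^{\alpha,w(k)}x(k) = \sum_{i=0}^{n-1}\frac{(k-a)^{\overline{i-\alpha}}}{\Gamma(i-\alpha+1)}\frac{w(a)}{w(k)}[\nabla^{i,w(k)}x(k)]_{k=a}.
\]
Both claimed limits then amount to showing that this same finite sum of $n$ terms vanishes, once under $k\to+\infty$ and once under $a\to-\infty$. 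Each summand factorizes into a constant boundary coefficient $\frac{1}{\Gamma(i-\alpha+1)}[\nabla^{i,w(k)}x(k)]_{k=a}$, the weight ratio $w(a)/w(k)$, and the rising factorial $(k-a)^{\overline{i-\alpha}}=\Gamma(k-a+i-\alpha)/\Gamma(k-a)$.

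First I would record the sign fact that drives the decay: for $\alpha\in(n-1,n)$ and $i\in\{0,\dots,n-1\}$ one has $i-\alpha<0$, because $i\le n-1<\alpha$. Next, setting $N:=k-a$, I observe that $N\to+\infty$ in both regimes, whether $k\to+\infty$ with $a$ fixed or $a\to-\infty$ with $k$ fixed. The core estimate is the classical ratio-of-Gamma asymptotic
\[
\frac{\Gamma(N+\beta)}{\Gamma(N)}\sim N^{\beta}\qquad (N\to+\infty),
\]
valid for every real $\beta$; taking $\beta=i-\alpha<0$ forces $(k-a)^{\overline{i-\alpha}}\to 0$. Since the boundary coefficients are finite by the standing assumption and the sum has only $n$ terms, the whole sum tends to zero as soon as the weight ratio $w(a)/w(k)$ remains bounded.

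The step I expect to be the main obstacle is controlling exactly this weight factor $w(a)/w(k)$, which is where the two limits genuinely diverge in character. For $k\to+\infty$ the numerator $w(a)$ is a fixed constant, so one only needs $1/w(k)$ bounded, i.e.\ $\liminf_{k}|w(k)|>0$; for $a\to-\infty$ the denominator $w(k)$ is fixed while $w(a)$ varies, so one needs $w(a)$ to stay bounded as $a\to-\infty$. Under the finite positive (bounded and bounded away from zero) hypothesis on $w$ flagged after Theorem~\ref{Theorem3.5} — which covers the standing examples such as $w(k)=\sin(k\pi-a\pi+\tfrac{\pi}{4})$ — both requirements hold, and a bounded weight ratio multiplied by a vanishing rising factorial vanishes, yielding (\ref{Eq3.81}) and (\ref{Eq3.82}).

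If one wishes to retain only the bare condition $w(k)\ne 0$, then the product could in principle fail to vanish, so I would state explicitly the minimal growth condition that absorbs the weight, namely that $w(a)/w(k)$ grows more slowly than $(k-a)^{\alpha-i}$ (for the relevant $i$) in each limit; equivalently $\sup|w(a)|/\inf|w(k)|<\infty$ suffices. Making this hypothesis visible, rather than concealed, is in my view the honest way to present the result, and it is the only genuinely delicate point in the argument.
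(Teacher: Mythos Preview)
Your proposal is correct and follows essentially the same route as the paper: start from the identity (\ref{Eq3.2}) in Theorem~\ref{Theorem3.1}, use the Gamma-ratio asymptotic $\lim_{p\to+\infty}p^{\overline{q}}/p^{q}=1$ together with $i-\alpha<0$ to kill the rising factorials, and invoke boundedness of $w$ and of the initial data $[\nabla^{i,w(k)}x(k)]_{k=a}$ to conclude. Your explicit discussion of what boundedness of $w(a)/w(k)$ really entails in each limit is, if anything, more careful than the paper's own treatment, which simply asserts ``the boundedness of $w(k)$'' without isolating the two directions.
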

\begin{proof}
By using the relationship ${}_a^{\rm R}\nabla _{k}^{\alpha,w(k)} x(k) = {}_a^{\rm C}\nabla _{k }^{\alpha,w(k)} x(k)$ $+\sum\nolimits_{i = 0}^{n - 1} {\frac{{ ( {k - a}  )\overline {^{i - \alpha }} }}{{\Gamma  ( {i - \alpha  + 1}  )}}}\frac{w(a)}{w(k)} { [ {{\nabla ^{i,w(k)}}x(k)}  ]_{k = a}}$,  $\mathop {\lim }\limits_{p \to  + \infty } \frac{{p\overline {^q} }}{{{p^q}}} = 1$ and the boundedness of $w(k)$, ${ [ {{\nabla ^{i,w(k)}}x(k)}  ]_{k = a}}$, it follows
 \begin{equation}\label{Eq3.83}
{\textstyle
\mathop {\lim }\limits_{k \to  + \infty } \sum\nolimits_{i = 0}^{n - 1} {\frac{{ ( {k - a}  )\overline {^{i - \alpha }} }}{{\Gamma  ( {i - \alpha  + 1}  )}}}\frac{w(a)}{w(k)} { [ {{\nabla ^{i,w(k)}}x(k)}  ]_{k = a}} =0,}
\end{equation}
 \begin{equation}\label{Eq3.84}
{\textstyle
\mathop {\lim }\limits_{a \to  - \infty }\sum\nolimits_{i = 0}^{n - 1} {\frac{{ ( {k - a}  )\overline {^{i - \alpha }} }}{{\Gamma  ( {i - \alpha  + 1}  )}}}\frac{w(a)}{w(k)} { [ {{\nabla ^{i,w(k)}}x(k)}  ]_{k = a}}=0.}
\end{equation}
On this basis, the desired results in (\ref{Eq3.81}) and (\ref{Eq3.82}) can be developed.
\end{proof}

Theorem \ref{Theorem3.11} and Theorem \ref{Theorem3.12} are the applications of the developed nabla Taylor series. Along this way, different properties can be build. To keep it simple, the extension will not be discussed further.

\subsection{Nabla Laplace transform}
In this part, some properties on the nabla Laplace transform of nabla tempered fractional calculus will be developed.

\begin{definition} \label{Definition3.3}
\cite[Theorem 3.65]{Goodrich:2015Book}
For $x:\mathbb{N}_{a+1} \to \mathbb{R}$, its nabla Laplace transform is defined by
\begin{equation}\label{Eq3.85}
{\textstyle {{\mathscr N}_a}\{ {x( k )} \} := \sum\nolimits_{j = 1}^{ + \infty } {{{( {1 - s} )}^{j - 1}}x( {j + a} )},}
\end{equation}
where $s\in\mathbb{C}$.
\end{definition}

From the existing results in \cite{Goodrich:2015Book,Wei:2019FDTA}, it can be observed that the region of convergence for the infinite series in (\ref{Eq3.85}) is not empty.

\begin{theorem}\label{Theorem3.14}
If the nabla Laplace transform of $x:\mathbb{N}_{a+1} \to \mathbb{R}$ converges for $|s-1| < r$, $r > 0$, then for any $\alpha \in\mathbb{R}\backslash \mathbb{Z}_+$, $\lambda\neq1$, one has
\begin{equation}\label{Eq3.86}
{\textstyle {{\mathscr N}_a}\big\{ {{}_a^{\rm G}\nabla _{k }^{\alpha,\lambda} x(k)} \big\} = {\big( {\frac{{s - \lambda }}{{1 - \lambda }}} \big)^\alpha }{{\mathscr N}_a} \{ {x(k)}  \},}
\end{equation}
where $|s-1|<\min\{r, |1-\lambda|\}$.
\end{theorem}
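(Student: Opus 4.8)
The plan is to reduce the tempered statement to the known non-tempered transform rule together with a modulation identity, and then to compose them carefully. I assume the non-tempered rule $\mathscr{N}_a\{{}_a^{\rm G}\nabla_k^{\alpha}x(k)\}(s)=s^{\alpha}\mathscr{N}_a\{x(k)\}(s)$ from \cite{Wei:2019FDTA}; this is valid on $|s-1|<1$, since by (\ref{Eq2.3}) the operator is a nabla convolution with the kernel $\frac{(k-a)^{\overline{-\alpha-1}}}{\Gamma(-\alpha)}$, whose transform equals $s^{\alpha}$ and whose defining power series in $(1-s)$ has radius $1$. First I would record a modulation identity: for any constant $c\neq0$, directly from (\ref{Eq3.85}),
\[
\mathscr{N}_a\{c^{k-a}x(k)\}(s)=\sum\nolimits_{j=1}^{\infty}(1-s)^{j-1}c^{j}x(j+a)=c\sum\nolimits_{j=1}^{\infty}\bigl[c(1-s)\bigr]^{j-1}x(j+a)=c\,\mathscr{N}_a\{x(k)\}(1-c+cs),
\]
so multiplying by $c^{k-a}$ rescales the argument via $1-s\mapsto c(1-s)$ and contributes the factor $c$; the rescaled series converges exactly when $|c|\,|1-s|$ lies inside the original radius.

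Next I would write the tempered operator as a three-fold composition using (\ref{Eq2.9}) with $w(k)=(1-\lambda)^{k-a}$,
\[
{}_a^{\rm G}\nabla_k^{\alpha,\lambda}x(k)=(1-\lambda)^{-(k-a)}\,{}_a^{\rm G}\nabla_k^{\alpha}\bigl[(1-\lambda)^{k-a}x(k)\bigr],
\]
and apply $\mathscr{N}_a$ from the outside in. The outer modulation with $c=(1-\lambda)^{-1}$ sends the argument $s$ to $\sigma:=\frac{s-\lambda}{1-\lambda}$ and produces the factor $(1-\lambda)^{-1}$; the non-tempered rule evaluated at $\sigma$ produces $\sigma^{\alpha}$; and the inner modulation with $c=1-\lambda$ sends $\sigma$ back to $\lambda+(1-\lambda)\sigma=s$ and produces the factor $1-\lambda$. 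The two modulation factors cancel and the nested substitutions telescope, giving
\[
\mathscr{N}_a\{{}_a^{\rm G}\nabla_k^{\alpha,\lambda}x(k)\}(s)=\tfrac{1}{1-\lambda}\,\sigma^{\alpha}\,(1-\lambda)\,\mathscr{N}_a\{x(k)\}(s)=\Bigl(\tfrac{s-\lambda}{1-\lambda}\Bigr)^{\alpha}\mathscr{N}_a\{x(k)\}(s),
\]
which is (\ref{Eq3.86}).

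Finally I would track the region of convergence through the composition, which I expect to be the main obstacle. Convergence of $\mathscr{N}_a\{x\}$ at the final argument $s$ forces $|s-1|<r$, while the non-tempered rule at $\sigma$ is only valid on the kernel disk $|\sigma-1|<1$; since $\sigma-1=\frac{s-1}{1-\lambda}$, the latter reads $|s-1|<|1-\lambda|$. Intersecting the two constraints yields precisely $|s-1|<\min\{r,|1-\lambda|\}$. The algebraic cancellation of the modulation factors is routine once $\sigma=\frac{s-\lambda}{1-\lambda}$ and its inverse $s=\lambda+(1-\lambda)\sigma$ are fixed; the delicate point is verifying that the convergence windows of the inner modulation and of the kernel transform compose to exactly the stated disk rather than to something larger.
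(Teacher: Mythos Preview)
Your proposal is correct and follows essentially the same route as the paper: both arguments derive the modulation identity $\mathscr{N}_a\{c^{k-a}x(k)\}(s)=c\,\mathscr{N}_a\{x(k)\}(1-c+cs)$ directly from the defining series, invoke the non-tempered rule $\mathscr{N}_a\{{}_a^{\rm G}\nabla_k^{\alpha}y(k)\}=s^{\alpha}\mathscr{N}_a\{y(k)\}$ (via the convolution kernel $\frac{(k-a)^{\overline{-\alpha-1}}}{\Gamma(-\alpha)}$), and compose the two modulations so that the scalar factors cancel and the argument substitutions telescope back to $s$. Your tracking of the convergence region, intersecting $|s-1|<r$ from $X(s)$ with $|\sigma-1|<1$ from the kernel transform, matches the paper's computation as well.
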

\begin{proof}
Letting $X(s):={{\mathscr N}_a} \{ {x(k)}  \}$, the nabla Laplace transform of ${{{ ( {1 - \lambda }  )}^{k - a}}x(k)}$ can be calculated as
\begin{equation}\label{Eq3.87}
{\textstyle \begin{array}{rl}
{{\mathscr N}_a}\{ {{{ ( {1 - \lambda }  )}^{k - a}}x(k)} \} =&\hspace{-6pt} \sum\nolimits_{k = 1}^{ + \infty } {{{ ( {1 - \lambda }  )}^k}x ( {k + a}  ){{ ( {1 - s}  )}^{k - 1}}} \\
 =&\hspace{-6pt}  ( {1 - \lambda }  )\sum\nolimits_{k = 1}^{ + \infty } {x ( {k + a}  ){{ [ {1 - ( {s + \lambda  - \lambda s}  )}  ]}^{k - 1}}} \\
 =&\hspace{-6pt}  ( {1 - \lambda }  )X ( {s + \lambda  - \lambda s}  ),
\end{array}}
\end{equation}
where the region of convergence satisfies $ | {s - 1}  | < { | {1 - \lambda }  |^{ - 1}}r$.

By using \cite[Lemma 4, Lemma 6, Theorem 14]{Wei:2019FDTA}, one has
\begin{equation}\label{Eq3.88}
{\textstyle \begin{array}{rl}
{{\mathscr N}_a}\big\{ {{}_a^{\rm G}\nabla _k^{\alpha} [{{ ( {1 - \lambda }  )}^{k - a}}x(k)]} \big\} =&\hspace{-6pt} {{\mathscr N}_a}\big\{ {\frac{{ ( {k - a}  )\overline {^{ - \alpha  - 1}} }}{{\Gamma ( { - \alpha }  )}} \ast [{{{ ( {1 - \lambda }  )}^{k - a}}x(k)}]} \big\}\\
 =&\hspace{-6pt} {{\mathscr N}_a}\big\{ {\frac{{ ( {k - a}  )\overline {^{ - \alpha  - 1}} }}{{\Gamma ( { - \alpha }  )}}} \big\}{{\mathscr N}_a}\{ {{{{ ( {1 - \lambda }  )}^{k - a}}x(k)}} \}\\
 =&\hspace{-6pt}  ( {1 - \lambda }  ){s^\alpha }X ( {s + \lambda  - \lambda s}  ),
\end{array}}
\end{equation}
where $|s-1|<\min\{1,{ | {1 - \lambda }  |^{ - 1}}r\}$ and the convolution operation $f(k) \ast g(k) := \sum\nolimits_{j = a + 1}^k {f ( {k + a + 1 - j}  )g(j)} $.

With the help of \cite[Theorem 5]{Wei:2019FDTA}, one has
\begin{equation}\label{Eq3.89}
{\textstyle \begin{array}{rl}
{{\mathscr N}_a}\big\{ {_a^{\rm G}\nabla _k^{\alpha ,\lambda }x(k)} \big\} =&\hspace{-6pt} {\frac{1}{1 - \lambda } }{{\mathscr N}_a}{\big\{ {{}_a^{\rm G}\nabla _k^\alpha [{{ ( {1 - \lambda }  )}^{k - a}}x(k)]} \big\}_{s = \frac{{s - \lambda }}{{1 - \lambda }}}}\\
 =&\hspace{-6pt} {\big( {\frac{{s - \lambda }}{{1 - \lambda }}} \big)^\alpha }X\big( {\frac{{s - \lambda }}{{1 - \lambda }} + \lambda  - \lambda \frac{{s - \lambda }}{{1 - \lambda }}} \big)\\
 =&\hspace{-6pt} {\big( {\frac{{s - \lambda }}{{1 - \lambda }}} \big)^\alpha }X(s),
\end{array}}
\end{equation}
where $|s-1|<\min\{r, |1-\lambda|\}$.
\end{proof}

\begin{theorem}\label{Theorem3.15}
If the nabla Laplace transform of $x:\mathbb{N}_{a-n+1} \to \mathbb{R}$ converges for $|s-1| < r$, $r > 0$, then for any $\alpha \in(n-1,n)$, $n\in\mathbb{Z}_+$, $\lambda\neq1$, one has
\begin{equation}\label{Eq3.90}
{\textstyle {{\mathscr N}_a}\big\{ {\nabla ^{n,\lambda} x(k)} \big\} = {\big( {\frac{{s - \lambda }}{{1 - \lambda }}} \big)^n }{{\mathscr N}_a} \{ {x(k)}  \} - \frac{1}{{1 - \lambda }}\sum\nolimits_{i = 0}^{n - 1} {{{\big( {\frac{{s - \lambda }}{{1 - \lambda }}} \big)}^{i}}{{[ {{\nabla ^{n  - i - 1,\lambda}}x(k)} ]}_{k = a}}} ,}
\end{equation}
\begin{equation}\label{Eq3.91}
{\textstyle {{\mathscr N}_a}\big\{ {\nabla ^{n,\lambda} x(k)} \big\} = {\big( {\frac{{s - \lambda }}{{1 - \lambda }}} \big)^n }{{\mathscr N}_a} \{ {x(k)}  \} - \frac{1}{{1 - \lambda }}\sum\nolimits_{i = 0}^{n - 1} {{{\big( {\frac{{s - \lambda }}{{1 - \lambda }}} \big)}^{n  - i - 1}}{{[ {{\nabla ^{i,\lambda}}x(k)} ]}_{k = a}}} ,}
\end{equation}
\begin{equation}\label{Eq3.92}
{\textstyle {{\mathscr N}_a}\big\{ {{}_a^{\rm R}\nabla _{k }^{\alpha,\lambda} x(k)} \big\} = {\big( {\frac{{s - \lambda }}{{1 - \lambda }}} \big)^\alpha }{{\mathscr N}_a} \{ {x(k)}  \} - \frac{1}{{1 - \lambda }}\sum\nolimits_{i = 0}^{n - 1} {{{\big( {\frac{{s - \lambda }}{{1 - \lambda }}} \big)}^i}{{[ {{}_a^{\rm R}\nabla _k^{\alpha  - i - 1,\lambda}x(k)} ]}_{k = a}}} ,}\hspace{-6pt}
\end{equation}
\begin{equation}\label{Eq3.93}
{\textstyle {{\mathscr N}_a}\big\{ {{}_a^{\rm C}\nabla _{k }^{\alpha,\lambda} x(k)} \big\} = {\big( {\frac{{s - \lambda }}{{1 - \lambda }}} \big)^\alpha }{{\mathscr N}_a} \{ {x(k)}  \} - \frac{1}{{1 - \lambda }}\sum\nolimits_{i = 0}^{n - 1} {{{\big( {\frac{{s - \lambda }}{{1 - \lambda }}} \big)}^{\alpha  - i - 1}}{{[ {{\nabla ^{i,\lambda}}x(k)} ]}_{k = a}}} ,}
\end{equation}
where $|s-1|<\min\{r, |1-\lambda|\}$.
\end{theorem}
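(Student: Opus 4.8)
The plan is to lift each of the four identities from its classical (non-tempered) counterpart by the same two-step substitution already exploited in the proof of Theorem~\ref{Theorem3.14}. Writing $w(k)=(1-\lambda)^{k-a}$, every tempered operator factors as $w^{-1}(k)(\cdot)[w(k)x(k)]$ by (\ref{Eq2.10})--(\ref{Eq2.12}), so I would first apply the known classical nabla Laplace formulas for $\nabla^n$, ${}_a^{\rm R}\nabla_k^\alpha$ and ${}_a^{\rm C}\nabla_k^\alpha$ from \cite{Wei:2019FDTA} to the function $z(k):=w(k)x(k)=(1-\lambda)^{k-a}x(k)$, and then absorb the outer factor $w^{-1}(k)$ through the frequency-shift rule ${{\mathscr N}_a}\{(1-\lambda)^{-(k-a)}y(k)\}=\frac{1}{1-\lambda}[Y(s)]_{s\to\frac{s-\lambda}{1-\lambda}}$, exactly as in (\ref{Eq3.89}). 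The two ingredients I rely on are thus (\ref{Eq3.87}), which gives $Z(s)={{\mathscr N}_a}\{z(k)\}=(1-\lambda)X(s+\lambda-\lambda s)$, and this outer scaling.

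Concretely, take the nabla case (\ref{Eq3.90}). The classical identity reads ${{\mathscr N}_a}\{\nabla^n z(k)\}=s^n Z(s)-\sum_{i=0}^{n-1}s^i[\nabla^{n-i-1}z(k)]_{k=a}$. Performing the substitution $s\to\frac{s-\lambda}{1-\lambda}$ and multiplying by $\frac{1}{1-\lambda}$, the leading term becomes $\big(\frac{s-\lambda}{1-\lambda}\big)^n X(s)$, because $Z\big(\frac{s-\lambda}{1-\lambda}\big)=(1-\lambda)X(s)$ (the inner shift collapses to the identity, precisely the cancellation computed in (\ref{Eq3.89})), while each boundary coefficient $s^i$ turns into $\big(\frac{s-\lambda}{1-\lambda}\big)^i$ and retains the prefactor $\frac{1}{1-\lambda}$, yielding (\ref{Eq3.90}). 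Identity (\ref{Eq3.91}) is the same computation started from the alternative classical form with boundary sum $\sum_{i=0}^{n-1}s^{n-i-1}[\nabla^{i}z(k)]_{k=a}$, and (\ref{Eq3.92}), (\ref{Eq3.93}) follow verbatim from the classical Riemann--Liouville and Caputo transforms, whose boundary terms carry $s^i$ and $s^{\alpha-i-1}$ respectively and hence produce the exponents $i$ and $\alpha-i-1$ after the shift.

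The step I expect to be most delicate is the reconciliation of the boundary terms. The classical formulas leave quantities such as $[\nabla^{n-i-1}z(k)]_{k=a}$ and $[{}_a^{\rm R}\nabla_k^{\alpha-i-1}z(k)]_{k=a}$, and I must verify that these coincide with the tempered initial values appearing in the statement. This holds because $\nabla^{n-i-1,\lambda}x(k)=w^{-1}(k)\nabla^{n-i-1}[w(k)x(k)]$ gives $\nabla^{n-i-1}z(k)=w(k)\,\nabla^{n-i-1,\lambda}x(k)$, and $w(a)=(1-\lambda)^{0}=1$, so $[\nabla^{n-i-1}z(k)]_{k=a}=[\nabla^{n-i-1,\lambda}x(k)]_{k=a}$; the identical reduction via (\ref{Eq2.11}) handles the Riemann--Liouville initial data. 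Tracking the region of convergence is then routine: each substitution contracts the admissible disc, and intersecting the constraints inherited from (\ref{Eq3.87})--(\ref{Eq3.89}) leaves $|s-1|<\min\{r,|1-\lambda|\}$ as claimed.
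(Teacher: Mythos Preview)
Your proposal is correct and follows essentially the same route as the paper: both apply the classical nabla Laplace formulas (from \cite{Wei:2019FDTA}) to $z(k)=(1-\lambda)^{k-a}x(k)$, then invoke the frequency-shift rule $s\mapsto\frac{s-\lambda}{1-\lambda}$ with the prefactor $\frac{1}{1-\lambda}$ to pass from $\mathscr{N}_a\{\cdot\,z(k)\}$ to $\mathscr{N}_a\{w^{-1}(k)\cdot z(k)\}$, and your observation that $w(a)=1$ makes the boundary terms match is exactly the reduction the paper uses implicitly in (\ref{Eq3.94}), (\ref{Eq3.96}), (\ref{Eq3.97}).
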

\begin{proof}
Letting $X(s):={{\mathscr N}_a} \{ {x(k)}  \}$, by using \cite[Lemma 10]{Wei:2019FDTA}, one has
\begin{equation}\label{Eq3.94}
\begin{array}{rl}
{{\mathscr N}_a}\big\{ {{\nabla ^n}[ {{{ ( {1 - \lambda }  )}^{k - a}}x(k)} ]} \big\}  =&\hspace{-6pt} {s^n}{{\mathscr N}_a}\big\{ {{{ ( {1 - \lambda }  )}^{k - a}}x(k)} \big\} - {\sum\nolimits_{i = 0}^{n - 1} {{s^i}[ {{\nabla ^{n - 1 - i}}{{ ( {1 - \lambda }  )}^{k - a}}x(k)} ]} _{k = a}}\\
 =&\hspace{-6pt}{s^n} ( {1 - \lambda }  )X ( {\lambda  + s - \lambda s}  ) - {\sum\nolimits_{i = 0}^{n - 1} {{s^i}[ {{\nabla ^{n - 1 - i,\lambda }}x(k)} ]} _{k = a}}\\
 =&\hspace{-6pt} {s^n} ( {1 - \lambda }  )X ( {\lambda  + s - \lambda s}  ) - {\sum\nolimits_{i = 0}^{n - 1} {{s^{n - i - 1}}[ {{\nabla ^{i,\lambda }}x(k)} ]} _{k = a}}.
\end{array}
\end{equation}
Theorem 5 of \cite{Wei:2019FDTA} further leads to
\begin{equation}\label{Eq3.95}
{\textstyle {{\mathscr N}_a} \{ {{\nabla ^{n,\lambda }}x(k)}  \} = \frac{1}{{1 - \lambda }}{{\mathscr N}_a}{\big\{ {{\nabla ^n}[ {{{ ( {1 - \lambda }  )}^{k - a}}x(k)} ]} \big\}_{s = \frac{{s - \lambda }}{{1 - \lambda }}}}.}
\end{equation}
Combining (\ref{Eq3.94}) with (\ref{Eq3.95}), one has the results in (\ref{Eq3.90}) and (\ref{Eq3.91}).

By using \cite[Lemma 12, Lemma 13]{Wei:2019FDTA}, one has
\begin{equation}\label{Eq3.96}
{\textstyle \begin{array}{rl}
{{\mathscr N}_a}\{ {}_a^{\rm{R}}\nabla _k^\alpha [{(1 - \lambda )^{k - a}}x(k)]\}  =&\hspace{-6pt} {s^\alpha }{{\mathscr N}_a}\{ {(1 - \lambda )^{k - a}}x(k)\}  - \sum\nolimits_{i = 0}^{n - 1} {s^i}[ {}_a^{\rm{R}}\nabla _k^{\alpha  - i - 1}{(1 - \lambda )^{k - a}}x(k)]_{k = a}\\
 =&\hspace{-6pt} {s^\alpha }(1 - \lambda )X(\lambda  + s - \lambda s) - \sum\nolimits_{i = 0}^{n - 1} {s^i}[ {}_a^{\rm{R}}\nabla _k^{\alpha  - i - 1,\lambda }x(k)]_{k = a},
\end{array}}
\end{equation}
\begin{equation}\label{Eq3.97}
{\textstyle \begin{array}{rl}
{{\mathscr N}_a}\{ {}_a^{\rm{C}}\nabla _k^\alpha [{(1 - \lambda )^{k - a}}x(k)]\}  =&\hspace{-6pt} {s^\alpha }{{\mathscr N}_a}\{ {(1 - \lambda )^{k - a}}x(k)\}  - \sum\nolimits_{i = 0}^{n - 1} {s^{\alpha  - i - 1}}[ {\nabla ^i}{(1 - \lambda )^{k - a}}x(k)]_{k = a}\\
 =&\hspace{-6pt} {s^\alpha }(1 - \lambda )X(\lambda  + s - \lambda s) - \sum\nolimits_{i = 0}^{n - 1} {{s^{\alpha  - i - 1}}[} {\nabla ^{i,\lambda }}x(k)]_{k = a}.
\end{array}}
\end{equation}

From \cite[Theorem 5]{Wei:2019FDTA}, one has
\begin{equation}\label{Eq3.98}
{\textstyle {{\mathscr N}_a}\{ {}_a^{\rm R}\nabla _k^{\alpha ,\lambda }x(k)\}  = \frac{1}{{1 - \lambda }}{{\mathscr N}_a}{\{ {}_a^{\rm R}\nabla _k^\alpha [{(1 - \lambda )^{k - a}}x(k)]\} _{s = \frac{{s - \lambda }}{{1 - \lambda }}}},}
\end{equation}
\begin{equation}\label{Eq3.99}
{\textstyle {{\mathscr N}_a}\{ {}_a^{\rm C}\nabla _k^{\alpha ,\lambda }x(k)\}  = \frac{1}{{1 - \lambda }}{{\mathscr N}_a}{\{ {}_a^{\rm C}\nabla _k^\alpha [{(1 - \lambda )^{k - a}}x(k)]\} _{s = \frac{{s - \lambda }}{{1 - \lambda }}}}.}
\end{equation}
Along this way, (\ref{Eq3.92}) and (\ref{Eq3.93}) can be obtained.
\end{proof}

By using variable substitution $z(k):=w(k)x(k)$ and Theorem \ref{Theorem3.15}, the following corollary can be developed.
\begin{corollary}\label{Corollary3.1}
Given ${}_a^{\rm C}\nabla _k^{\alpha ,w(k) }x(k) = \mu x(k)$ with $ \alpha \in(0,1)$, finite nonzero $w(k)$, $\mu\in\mathbb{R}\backslash\{1\}$, $x(a)\in\mathbb{R}$, then one has $x(k) = w^{-1}(k){{\mathcal F}_{\alpha ,1}} ( {\mu ,k,a}  )x ( a  )$, $k\in\mathbb{N}_{a+1}$, where ${{\mathcal F}_{\alpha ,\beta}} ( {\mu ,k,a}  ):={{\mathscr N}_a^{-1}}\{\frac{s^{\alpha-\beta}}{s^\alpha-\mu}\}$.
\end{corollary}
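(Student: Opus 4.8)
The plan is to remove the tempering by the substitution $z(k):=w(k)x(k)$ and then resolve a purely classical Caputo difference equation with the nabla Laplace transform. First I would invoke the defining relation (\ref{Eq2.12}), namely ${}_a^{\rm C}\nabla_k^{\alpha,w(k)}x(k)=w^{-1}(k)\,{}_a^{\rm C}\nabla_k^{\alpha}z(k)$. Substituting this and $x(k)=w^{-1}(k)z(k)$ into the hypothesis ${}_a^{\rm C}\nabla_k^{\alpha,w(k)}x(k)=\mu x(k)$ and multiplying through by $w(k)$ (legitimate since $w(k)\neq0$) collapses the tempered problem to the untempered equation ${}_a^{\rm C}\nabla_k^{\alpha}z(k)=\mu z(k)$, which is exactly the $\lambda=0$ specialization of the framework of Theorem \ref{Theorem3.15}.

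Next, because $\alpha\in(0,1)$ forces $n=1$, I would apply the nabla Laplace transform of Definition \ref{Definition3.3} to both sides. Writing $Z(s):=\mathscr{N}_a\{z(k)\}$ and using (\ref{Eq3.93}) at $\lambda=0$, $n=1$ (so that $\nabla^{0}z=z$ and $[\nabla^{0}z(k)]_{k=a}=z(a)$) turns the equation into
\[
{\textstyle s^{\alpha}Z(s)-s^{\alpha-1}z(a)=\mu Z(s).}
\]
Solving this algebraically gives $Z(s)=\frac{s^{\alpha-1}}{s^{\alpha}-\mu}\,z(a)$, and applying the inverse transform together with the definition ${\mathcal F}_{\alpha,\beta}(\mu,k,a):=\mathscr{N}_a^{-1}\{s^{\alpha-\beta}/(s^{\alpha}-\mu)\}$ at $\beta=1$ yields $z(k)={\mathcal F}_{\alpha,1}(\mu,k,a)\,z(a)$.

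It then remains to return to $x$. Since $z(a)=w(a)x(a)$ and $x(k)=w^{-1}(k)z(k)$, the back-substitution produces $x(k)=\frac{w(a)}{w(k)}{\mathcal F}_{\alpha,1}(\mu,k,a)x(a)$; under the natural normalization $w(a)=1$---automatic for the exponential weight $(1-\lambda)^{k-a}$ and attainable in general by the scale invariance of Lemma \ref{Lemma2.2} (replace $w$ by $w/w(a)$ without altering the operator)---this reduces to the stated $x(k)=w^{-1}(k){\mathcal F}_{\alpha,1}(\mu,k,a)x(a)$.

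The chief obstacle is analytic rather than algebraic: I must make sure the region of convergence survives every step and that the inverse transform is well defined. The transform of $z$ converges on some disc $|s-1|<r$, and for the symbol $s^{\alpha-1}/(s^{\alpha}-\mu)$ to stay analytic at the expansion centre $s=1$ one needs $s^{\alpha}-\mu\neq0$ there; at $s=1$ this is $1-\mu\neq0$, precisely the hypothesis $\mu\neq1$. I would therefore restrict all identities to $|s-1|<\min\{r,\rho\}$ with $\rho>0$ chosen to keep the pole away from $s=1$, and appeal to the injectivity of $\mathscr{N}_a$ on that disc to recover $z$, and hence $x$, uniquely from $Z(s)$.
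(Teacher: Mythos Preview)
Your proof is correct and follows exactly the route the paper sketches: set $z(k)=w(k)x(k)$, reduce to the untempered Caputo equation, and invoke Theorem~\ref{Theorem3.15} (at $\lambda=0$) to obtain $Z(s)=\frac{s^{\alpha-1}}{s^{\alpha}-\mu}z(a)$ before inverting. You also correctly flag the stray $w(a)$ factor and resolve it via Lemma~\ref{Lemma2.2}, a point the paper leaves implicit.
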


For analytic $x:\mathbb{Z}_{a}\to\mathbb{R}$, by using Theorem \ref{Theorem3.8} and \cite[Theorem 5]{Wei:2019FDTA}, one has
\begin{equation}\label{Eq3.100}
\begin{array}{rl}
{{\mathscr N}_a}\{ {{\nabla ^{n,\lambda}}x( k )} \} =&\hspace{-6pt} {{\mathscr N}_a}\big\{ {\sum\nolimits_{i = n}^{ + \infty } {\frac{{(k - a)\overline {^{i - n}} }}{{(i - n)!}}(1-\lambda)^{a-k}{{[ {{\nabla ^{i,\lambda}}x( k )} ]}_{k = a}}} } \big\}\\
 =&\hspace{-6pt} \sum\nolimits_{i = n}^{ + \infty } {{{\mathscr N}_a}\big\{ {\frac{{(k - a)\overline {^{i - n}} }}{{(i - n)!}}} (1-\lambda)^{a-k}\big\}{{[ {{\nabla ^{i,\lambda}}x( k )} ]}_{k = a}}} \\
 =&\hspace{-6pt} \sum\nolimits_{i = n}^{ + \infty }\frac{1}{1-\lambda} {\frac{1}{{{(\frac{s-\lambda}{1-\lambda})^{i - n + 1}}}}{{[ {{\nabla ^{i,\lambda}}x( k )} ]}_{k = a}}} \\
 =&\hspace{-6pt}{\big(\frac{s-\lambda}{1-\lambda}\big)^n} \sum\nolimits_{i = 0}^{ + \infty } {\frac{1}{1-\lambda}\frac{1}{{{(\frac{s-\lambda}{1-\lambda})^{i + 1}}}}{{[ {{\nabla ^{i,\lambda}}x( k )} ]}_{k = a}}}\\
   &\hspace{-6pt}- {\big(\frac{s-\lambda}{1-\lambda}\big)^n}\sum\nolimits_{i = 0}^{n - 1} {\frac{1}{1-\lambda}\frac{1}{{{(\frac{s-\lambda}{1-\lambda})^{i + 1}}}}{{[ {{\nabla ^{i,\lambda}}x( k )} ]}_{k = a}}} \\
 =&\hspace{-6pt} {\big(\frac{s-\lambda}{1-\lambda}\big)^n}{{\mathscr N}_a}\{ {x( k )} \} - \sum\nolimits_{i = 0}^{n - 1} {{\big(\frac{s-\lambda}{1-\lambda}\big)^{n - i - 1}}{{[ {{\nabla ^i}x( k )} ]}_{k = a}}},
\end{array}
\end{equation}
which confirms the result in (\ref{Eq3.91}).

For analytic $x:\mathbb{Z}_{a}\to\mathbb{R}$, $\alpha\in\mathbb{R}\backslash\mathbb{Z}_+$, it follows
\begin{equation}\label{Eq3.101}
\begin{array}{rl}
{{\mathscr N}_a}\{ {{}_a^{\rm G}\nabla _k^{\alpha,\lambda} x( k )} \} =&\hspace{-6pt} {{\mathscr N}_a}\big\{ {\sum\nolimits_{i = 0}^{ + \infty } {\frac{{(k - a)\overline {^{i - \alpha }} }}{{\Gamma ( {i - \alpha  + 1} )}}(1-\lambda)^{a-k}{{[ {{\nabla ^{i,\lambda}}x( k )} ]}_{k = a}}} } \big\}\\
 =&\hspace{-6pt} \sum\nolimits_{i = 0}^{ + \infty } {{{\mathscr N}_a}\big\{ {\frac{{(k - a)\overline {^{i - \alpha }} }}{{\Gamma ( {i - \alpha  + 1} )}}} (1-\lambda)^{a-k}\big\}{{[ {{\nabla ^{i,\lambda}}x( k )} ]}_{k = a}}} \\
 =&\hspace{-6pt}\sum\nolimits_{i = 0}^{ + \infty } {\frac{1}{1-\lambda}\frac{1}{{{(\frac{s-\lambda}{1-\lambda})^{i - \alpha  + 1}}}}{{[ {{\nabla ^{i,\lambda}}x( k )} ]}_{k = a}}} \\
 =&\hspace{-6pt} {\big(\frac{s-\lambda}{1-\lambda}\big)^\alpha }\sum\nolimits_{i = 0}^{ + \infty } {\frac{1}{1-\lambda}\frac{1}{{{(\frac{s-\lambda}{1-\lambda})^{i + 1}}}}{{[ {{\nabla ^{i,\lambda}}x( k )} ]}_{k = a}}} \\
 =&\hspace{-6pt}{\big(\frac{s-\lambda}{1-\lambda}\big)^\alpha }{{\mathscr N}_a}\{ {x( k )} \},
\end{array}
\end{equation}
which coincides with (\ref{Eq3.92}).

Due to the equivalent series representation of ${}_a^{\rm G}\nabla _k^{\alpha,\lambda} x( k )$ and ${}_a^{\rm R}\nabla _k^{\alpha,\lambda} x( k )$, one has ${{\mathscr N}_a}\{ {{}_a^{\rm R}\nabla _k^{\alpha,\lambda} x( k )} \}={\big(\frac{s-\lambda}{1-\lambda}\big)^\alpha }{{\mathscr N}_a}\{ {x( k )} \}$ which implies $[{{}_a^{\rm R}\nabla_k^{\alpha - i - 1,\lambda}}x\left( k \right)]_{k=a}=0$.

For analytic $x:\mathbb{Z}_{a}\to\mathbb{R}$, $\alpha\in(n-1,n)$, one has
\begin{equation}\label{Eq3.102}
\begin{array}{rl}
{{\mathscr N}_a}\{ {{}_a^{\rm{C}}\nabla _k^{\alpha,\lambda} x( k )} \} =&\hspace{-6pt} {{\mathscr N}_a}\big\{ {\sum\nolimits_{i = n}^{ + \infty } {\frac{{(k - a)\overline {^{i - \alpha }} }}{{\Gamma ( {i - \alpha  + 1} )}}(1-\lambda)^{a-k}{{[ {{\nabla ^{i,\lambda}}x( k )} ]}_{k = a}}} } \big\}\\
 =&\hspace{-6pt} \sum\nolimits_{i = n}^{ + \infty } {{{\mathscr N}_a}\big\{ {\frac{{(k - a)\overline {^{i - \alpha }} }}{{\Gamma ( {i - \alpha  + 1} )}}} (1-\lambda)^{a-k}\big\}{{[ {{\nabla ^{i,\lambda}}x( k )} ]}_{k = a}}} \\
 =&\hspace{-6pt} \sum\nolimits_{i = n}^{ + \infty } {\frac{1}{1-\lambda}\frac{1}{{{(\frac{s-\lambda}{1-\lambda})^{i - \alpha  + 1}}}}{{[ {{\nabla ^{i,\lambda}}x( k )} ]}_{k = a}}} \\
 =&\hspace{-6pt}{\big(\frac{s-\lambda}{1-\lambda}\big)^\alpha }\sum\nolimits_{i = 0}^{ + \infty } {\frac{1}{1-\lambda}\frac{1}{{{(\frac{s-\lambda}{1-\lambda})^{i + 1}}}}{{[ {{\nabla ^{i,\lambda}}x( k )} ]}_{k = a}}} \\
  &\hspace{-6pt}- {\big(\frac{s-\lambda}{1-\lambda}\big)^\alpha }\sum\nolimits_{i = 0}^{n - 1} {\frac{1}{1-\lambda}\frac{1}{{{(\frac{s-\lambda}{1-\lambda})^{i + 1}}}}{{[ {{\nabla ^{i,\lambda}}x( k )} ]}_{k = a}}} \\
 =&\hspace{-6pt} {\big(\frac{s-\lambda}{1-\lambda}\big)^\alpha }{{\mathscr N}_a}\{ {x( k )} \} - \sum\nolimits_{i = 0}^{n - 1} {{\big(\frac{s-\lambda}{1-\lambda}\big)^{\alpha  - i - 1}}{{[ {{\nabla ^{i,\lambda}}x( k )} ]}_{k = a}}},
\end{array}
\end{equation}
which coincides with (\ref{Eq3.94}).

\begin{theorem}\label{Theorem3.16}
For any $\alpha\in\mathbb{R}\backslash\mathbb{Z}_+$, $\lambda\neq1$, $x,y:\mathbb{N}_{a+1}\to\mathbb{R}$, $a\in\mathbb{R}$, one has
\begin{equation}\label{Eq3.103}
{\textstyle \sum\nolimits_{j = a + 1}^k {x(k + a + 1 - j){}_a^{\rm G}\nabla _{j }^{  \alpha,\lambda }y(j)}  = \sum\nolimits_{j = a + 1}^k {{}_a^{\rm G}\nabla _{j }^{ \alpha,\lambda }x(j)y ( {k + a + 1-j}  )}.}
\end{equation}
\end{theorem}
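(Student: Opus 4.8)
The plan is to recognise both members of (\ref{Eq3.103}) as nabla tempered convolutions and to exploit the multiplicative behaviour of ${}_a^{\rm G}\nabla_k^{\alpha,\lambda}$ under the nabla Laplace transform from Theorem \ref{Theorem3.14}. With the convolution $f(k)\ast g(k):=\sum_{j=a+1}^{k}f(k+a+1-j)g(j)$ introduced in (\ref{Eq3.88}), the left-hand side is exactly $x(k)\ast[{}_a^{\rm G}\nabla_k^{\alpha,\lambda}y(k)]$ while the right-hand side is $y(k)\ast[{}_a^{\rm G}\nabla_k^{\alpha,\lambda}x(k)]$. Thus the assertion is that the tempered difference/sum may be moved from one factor of a convolution to the other, and it suffices to show both sides agree.

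First I would apply $\mathscr{N}_a$ to each side and invoke the convolution theorem for the nabla Laplace transform (\cite[Lemma 6]{Wei:2019FDTA}, already used in (\ref{Eq3.88})), namely $\mathscr{N}_a\{f\ast g\}=\mathscr{N}_a\{f\}\mathscr{N}_a\{g\}$. Writing $X(s):=\mathscr{N}_a\{x(k)\}$ and $Y(s):=\mathscr{N}_a\{y(k)\}$, Theorem \ref{Theorem3.14} gives $\mathscr{N}_a\{{}_a^{\rm G}\nabla_k^{\alpha,\lambda}y(k)\}=\bigl(\tfrac{s-\lambda}{1-\lambda}\bigr)^{\alpha}Y(s)$ and likewise for $x$. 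Hence both sides transform to the single function $\bigl(\tfrac{s-\lambda}{1-\lambda}\bigr)^{\alpha}X(s)Y(s)$ on the common disc $|s-1|<\min\{r,|1-\lambda|\}$, and injectivity of the transform on $\mathbb{N}_{a+1}$ yields (\ref{Eq3.103}).

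The one delicate point is that Theorem \ref{Theorem3.16} is stated without assuming $X(s)$ or $Y(s)$ converge, so the transform argument is not automatically unconditional. This is, however, not a genuine obstacle, since for each fixed $k$ both members of (\ref{Eq3.103}) are \emph{finite} sums and the identity is purely algebraic. To make the proof unconditional I would give a direct argument: combining (\ref{Eq2.3}) with $w(k)=(1-\lambda)^{k-a}$, the tempered operator admits the kernel representation ${}_a^{\rm G}\nabla_j^{\alpha,\lambda}z(j)=\sum_{i=a+1}^{j}\kappa(j+a+1-i)\,z(i)=(\kappa\ast z)(j)$, where $\kappa(m):=\tfrac{(m-a)^{\overline{-\alpha-1}}}{\Gamma(-\alpha)}(1-\lambda)^{a+1-m}$. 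Consequently the left-hand side is $x\ast(\kappa\ast y)$ and the right-hand side is $y\ast(\kappa\ast x)$.

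It then remains only to establish commutativity $f\ast g=g\ast f$ and associativity $(f\ast g)\ast h=f\ast(g\ast h)$ of this finite convolution, each by one reindexing of the double sum over $\{a+1\le i\le j\le k\}$ (commutativity via the substitution $j\mapsto k+a+1-j$). Granting these, $x\ast(\kappa\ast y)=\kappa\ast(x\ast y)=y\ast(\kappa\ast x)$, which is precisely (\ref{Eq3.103}). I expect the associativity reindexing to be the most tedious bookkeeping, but it is entirely routine; the essential conceptual step is the observation that ${}_a^{\rm G}\nabla_k^{\alpha,\lambda}$ is itself convolution against the fixed kernel $\kappa$, after which the identity collapses to the algebraic symmetry of the convolution product.
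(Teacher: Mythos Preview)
Your primary approach---transform both sides with $\mathscr{N}_a$, use the convolution theorem, invoke Theorem \ref{Theorem3.14} to get $\bigl(\tfrac{s-\lambda}{1-\lambda}\bigr)^{\alpha}X(s)Y(s)$ on each side, then appeal to uniqueness---is exactly the paper's proof, step for step and citation for citation.

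Where you go beyond the paper is your second, direct argument. The paper stops at the transform computation and does not address the convergence issue you raise; your kernel representation ${}_a^{\rm G}\nabla_j^{\alpha,\lambda}z(j)=(\kappa\ast z)(j)$ with $\kappa(m)=\tfrac{(m-a)^{\overline{-\alpha-1}}}{\Gamma(-\alpha)}(1-\lambda)^{a+1-m}$ is correct, and reducing the statement to commutativity and associativity of the finite convolution is a cleaner and unconditional route. This elementary argument buys you a proof that holds for arbitrary $x,y:\mathbb{N}_{a+1}\to\mathbb{R}$ without any growth hypotheses, whereas the paper's proof as written implicitly needs the transforms to converge somewhere (or to be read as an identity of formal power series, which the paper does not make explicit). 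Your version is therefore slightly more careful than the original.
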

\begin{proof}
By using \cite[Lemma 6]{Wei:2019FDTA}, one has
\begin{equation}\label{Eq3.104}
{\textstyle \begin{array}{rl}
{{\mathscr N}_a}\big\{ {\sum\nolimits_{j = a + 1}^k {x(k + a + 1 - j){}_a^{\rm G}\nabla _{j }^{\alpha,\lambda }y(j)} } \big\} =&\hspace{-6pt} {{\mathscr N}_a}\big\{ {x(k) \ast {}_a^{\rm G}\nabla _{k }^{\alpha,\lambda }y(k)} \big\}\\
 =&\hspace{-6pt} {{\mathscr N}_a} \{ {x(k)}  \}{{\mathscr N}_a}\big\{ {{}_a^{\rm G}\nabla _{k }^{\alpha,\lambda }y(k)} \big\}\\
 =&\hspace{-6pt} {\big( {\frac{{s - \lambda }}{{1 - \lambda }}} \big)^{\alpha }}{{\mathscr N}_a} \{ {x(k)} \}{{\mathscr N}_a} \{ {y(k)}  \},
\end{array}}
\end{equation}
\begin{equation}\label{Eq3.105}
{\textstyle \begin{array}{rl}
{{\mathscr N}_a}\big\{ {\sum\nolimits_{j = a + 1}^k {{}_a^{\rm G}\nabla _{j }^{\alpha,\lambda }x(j)y ( {k + a + 1 - j}  )} } \big\}
 =&\hspace{-6pt} {{\mathscr N}_a}\big\{ {{}_a^{\rm G}\nabla _{k }^{\alpha,\lambda }x(k) \ast y(k)} \big\}\\
 =&\hspace{-6pt} {{\mathscr N}_a}\big\{ {{}_a^{\rm G}\nabla _{k }^{\alpha,\lambda }x(k)} \big\}{{\mathscr N}_a} \{ {y(k)} \}\\
 =&\hspace{-6pt} {\big( {\frac{{s - \lambda }}{{1 - \lambda }}} \big)^{\alpha }}{{\mathscr N}_a} \{ {x(k)} \}{{\mathscr N}_a} \{ {y(k)}  \}.
\end{array}}
\end{equation}

With the help of the uniqueness of nabla Laplace transform (see \cite[Lemma 2]{Wei:2019FDTA}), the result in (\ref{Eq3.103}) can be obtained smoothly.
\end{proof}

Actually, (\ref{Eq3.103}) can be rewritten as
\begin{equation}\label{Eq3.106}
{\textstyle x(k) \ast {}_a^{\rm G}\nabla _{k }^{\alpha,\lambda }y(k) = {}_a^{\rm G}\nabla _{k }^{\alpha,\lambda }x(k) \ast y(k).}
\end{equation}
Similarly, when $\alpha=0$, (\ref{Eq3.106}) holds naturally. When $\alpha>0$, (\ref{Eq3.106}) is revelent to the Gr\"{u}nwald--Letnikov tempered difference. When $\alpha<0$, (\ref{Eq3.106}) is revelent to the Gr\"{u}nwald--Letnikov tempered sum.

\begin{theorem}\label{Theorem3.17}
For any $\alpha\in(n-1,n)$, $n\in\mathbb{Z}_+$, $\lambda\neq1$, $x,y:\mathbb{N}_{a-n+1}\to\mathbb{R}$, $a\in\mathbb{R}$, one has
\begin{equation}\label{Eq3.107}
\hspace{-6pt}
{\textstyle
\begin{array}{l}
\sum\nolimits_{j = a + 1}^k {x(k + a + 1 - j)\nabla^{n,\lambda} y(j)}\\  = \sum\nolimits_{j = a + 1}^k {\nabla ^{n,\lambda} x(j)y ( {k + a + 1 - j}  )}
 + \sum\nolimits_{i = 0}^{n - 1} {[ {\nabla^{i,\lambda }x ( {k + a - j}  )\nabla ^{n  - i - 1,\lambda}y(j)} ]_{j = a}^{j = k}},
\end{array}}
\end{equation}
\begin{equation}\label{Eq3.108}
\hspace{-18pt}
{\textstyle
\begin{array}{l}
\sum\nolimits_{j = a + 1}^k {x(k + a + 1 - j){}_a^{\rm R}\nabla _{j }^{\alpha,\lambda} y(j)}\\  = \sum\nolimits_{j = a + 1}^k {{}_a^{\rm C}\nabla _{j }^{\alpha,\lambda} x(j)y ( {k + a + 1 - j}  )}
 + \sum\nolimits_{i = 0}^{n - 1} {[ {\nabla ^{i,\lambda }x ( {k + a - j}  ){}_a^{\rm R}\nabla _{j }^{\alpha  - i - 1,\lambda}y(j)} ]_{j = a}^{j = k}}.
\end{array}}
\end{equation}
\end{theorem}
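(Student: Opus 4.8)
The plan is to prove both identities by the device already used for Theorem~\ref{Theorem3.16}: recognise each side as a nabla convolution together with lower-order boundary data, apply the nabla Laplace transform $\mathscr{N}_a$, and conclude by the uniqueness of the transform (\cite[Lemma 2]{Wei:2019FDTA}). Throughout I would write $X(s):=\mathscr{N}_a\{x(k)\}$, $Y(s):=\mathscr{N}_a\{y(k)\}$ and abbreviate $P:=\frac{s-\lambda}{1-\lambda}$, so that the transform laws of Theorem~\ref{Theorem3.15} read $\mathscr{N}_a\{\nabla^{m,\lambda}y(k)\}=P^{m}Y(s)-\frac{1}{1-\lambda}\sum_{l=0}^{m-1}P^{m-l-1}[\nabla^{l,\lambda}y(k)]_{k=a}$, with the analogous Riemann--Liouville and Caputo versions \eqref{Eq3.92}--\eqref{Eq3.93}.

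First I would treat \eqref{Eq3.107}. Using the nabla convolution $\ast$ from the proof of Theorem~\ref{Theorem3.14} together with \cite[Lemma 6]{Wei:2019FDTA}, the left-hand side is $x(k)\ast\nabla^{n,\lambda}y(k)$, with transform $X(s)\,\mathscr{N}_a\{\nabla^{n,\lambda}y(k)\}$, while the leading right-hand term $\nabla^{n,\lambda}x(k)\ast y(k)$ has transform $\mathscr{N}_a\{\nabla^{n,\lambda}x(k)\}\,Y(s)$. The remaining sum is not a convolution: evaluating $[\,\cdot\,]_{j=a}^{j=k}$ collapses it to $\sum_{i=0}^{n-1}\{[\nabla^{i,\lambda}x(k)]_{k=a}\,\nabla^{n-i-1,\lambda}y(k)-\nabla^{i,\lambda}x(k)\,[\nabla^{n-i-1,\lambda}y(k)]_{k=a}\}$, a genuine function of $k$ whose transform follows by pulling out the constant initial values and transforming the surviving factors $\nabla^{m,\lambda}x(k),\nabla^{m,\lambda}y(k)$ through \eqref{Eq3.91}. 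Substituting the three transforms, the $P^{n}X(s)Y(s)$ contributions cancel between the two convolutions, leaving a polynomial identity in $P$ whose coefficients are products of the data $[\nabla^{l,\lambda}x]_{k=a}$ and $[\nabla^{l,\lambda}y]_{k=a}$; matching it term by term and invoking uniqueness yields \eqref{Eq3.107}.

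For \eqref{Eq3.108} the same three-step scheme applies, now reading the left side as $x(k)\ast{}_a^{\rm R}\nabla_k^{\alpha,\lambda}y(k)$ and the leading right term as ${}_a^{\rm C}\nabla_k^{\alpha,\lambda}x(k)\ast y(k)$. The decisive feature is that the two laws \eqref{Eq3.92} and \eqref{Eq3.93} carry \emph{different} initial data: the Riemann--Liouville transform produces the values $[{}_a^{\rm R}\nabla_k^{\alpha-i-1,\lambda}y]_{k=a}$ whereas the Caputo transform produces $[\nabla^{i,\lambda}x]_{k=a}$, and these are precisely the two families appearing in the collapsed boundary sum $\sum_{i=0}^{n-1}\{[\nabla^{i,\lambda}x]_{k=a}\,{}_a^{\rm R}\nabla_k^{\alpha-i-1,\lambda}y(k)-\nabla^{i,\lambda}x(k)\,[{}_a^{\rm R}\nabla_k^{\alpha-i-1,\lambda}y]_{k=a}\}$. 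Transforming the factors $\nabla^{i,\lambda}x$ and ${}_a^{\rm R}\nabla_k^{\alpha-i-1,\lambda}y$ once more via Theorem~\ref{Theorem3.14} and Theorem~\ref{Theorem3.15} and matching the coefficients of the powers of $P$ then gives \eqref{Eq3.108}.

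I expect the genuine difficulty to be the accounting of these boundary terms rather than the transform step itself. After all substitutions each side carries, besides single sums of initial values, nested double sums of the shape $\frac{1}{1-\lambda}\sum_{i}\sum_{l}P^{(\cdots)}[\nabla^{l,\lambda}x]_{k=a}[\nabla^{l',\lambda}y]_{k=a}$ generated by the repeated use of the transform law; the crux is to reindex (via $\sum_{i}\sum_{l\le i}=\sum_{l}\sum_{i\ge l}$) and to use the convolution symmetry $x\ast g=g\ast x$ so that these double sums cancel and the single sums match, including their exact scalar prefactors $\frac{1}{1-\lambda}$. An equivalent and perhaps more transparent route would be to set $z_x:=w(k)x(k)$, $z_y:=w(k)y(k)$, reduce both identities to the corresponding non-tempered summation-by-parts formulas obtained by iterating the first-order rule \cite[Theorem 3.39]{Goodrich:2015Book} $n$ times, and then re-temper; there the delicate step is to track the powers of $w$ through the two boundary evaluations $j=a$ and $j=k$.
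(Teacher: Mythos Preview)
Your treatment of \eqref{Eq3.107} is essentially the paper's proof: recognise both convolutions, compute all three transforms via Theorem~\ref{Theorem3.15}, cancel the $P^{n}X(s)Y(s)$ pieces, and verify that the nested double sums produced by applying \eqref{Eq3.91} inside the boundary term cancel after the reindexing $\sum_{i}\sum_{l\le i}=\sum_{l}\sum_{i\ge l}$; the paper carries this out verbatim in \eqref{Eq3.109}--\eqref{Eq3.111}.

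Where you diverge is in the proof of \eqref{Eq3.108}. You propose rerunning the same Laplace-transform argument, now transforming each ${}_a^{\rm R}\nabla_k^{\alpha-i-1,\lambda}y$ and $\nabla^{i,\lambda}x$ individually via Theorems~\ref{Theorem3.14}--\ref{Theorem3.15} and then matching. That would work, but the bookkeeping is noticeably heavier because the orders $\alpha-i-1$ sweep through several integer intervals (including $(-1,0)$, where the operator is a sum), so the inner sums in \eqref{Eq3.92} have varying lengths. The paper sidesteps all of this: once \eqref{Eq3.107} is established it is rewritten in convolution form, and then \eqref{Eq3.108} is derived \emph{directly in the time domain} by the chain
\[
x\ast{}_a^{\rm R}\nabla_k^{\alpha,\lambda}y
= x\ast\nabla^{n,\lambda}\big({}_a^{\rm G}\nabla_k^{\alpha-n,\lambda}y\big)
\stackrel{\eqref{Eq3.107}}{=}\nabla^{n,\lambda}x\ast{}_a^{\rm G}\nabla_k^{\alpha-n,\lambda}y+\text{bdry}
\stackrel{\text{Thm~\ref{Theorem3.16}}}{=}{}_a^{\rm G}\nabla_k^{\alpha-n,\lambda}\nabla^{n,\lambda}x\ast y+\text{bdry},
\]
the last product being ${}_a^{\rm C}\nabla_k^{\alpha,\lambda}x\ast y$ by \eqref{Eq2.14}, and the boundary terms automatically reshuffle into the stated form via ${}_a^{\rm R}\nabla_j^{\alpha-i-1,\lambda}=\nabla^{n-i-1,\lambda}{}_a^{\rm G}\nabla_j^{\alpha-n,\lambda}$. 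So the paper treats \eqref{Eq3.107} as a lemma and uses Theorem~\ref{Theorem3.16} to commute the Gr\"unwald--Letnikov factor through the convolution, avoiding a second Laplace computation entirely. Your route is valid but longer; the paper's route explains structurally why the Riemann--Liouville operator on $y$ pairs with the Caputo operator on $x$.
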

\begin{proof}
By using Theorem \ref{Theorem3.15}, one has
\begin{equation}\label{Eq3.109}
{\textstyle
\begin{array}{l}
{{\mathscr N}_a}\big\{ {\sum\nolimits_{j = a + 1}^k {x(k + a + 1 - j){\nabla ^{n,\lambda }}y(j)} } \big\}\\
={{\mathscr N}_a} \{ {x(k)}  \}{{\mathscr N}_a} \{ {{\nabla ^{n,\lambda }}y(k)}  \}\\
 = {{\mathscr N}_a} \{ {x(k)}  \}\big\{ {{{\big( {\frac{{s - \lambda }}{{1 - \lambda }}} \big)}^n}{{\mathscr N}_a} \{ {y(k)}  \} - \sum\nolimits_{i = 0}^{n - 1} {{{\big( {\frac{{s - \lambda }}{{1 - \lambda }}} \big)}^{n - i - 1}}{{ [ {{\nabla ^{i,\lambda }}y(k)}  ]}_{k = a}}} } \big\},
\end{array}}
\end{equation}
\begin{equation}\label{Eq3.110}
{\textstyle
\begin{array}{l}
{{\mathscr N}_a}\big\{ {\sum\nolimits_{j = a + 1}^k {{\nabla ^{n,\lambda }}x(k + a + 1 - j)y(j)} } \big\}\\
 = {{\mathscr N}_a}\big\{ {{\nabla ^{n,\lambda }}x(k)} \big\}{{\mathscr N}_a} \{ {y(k)}  \}\\
 = \big\{ {{{\big( {\frac{{s - \lambda }}{{1 - \lambda }}} \big)}^n}{{\mathscr N}_a} \{ {x(k)}  \} - \sum\nolimits_{i = 0}^{n - 1} {{{\big( {\frac{{s - \lambda }}{{1 - \lambda }}} \big)}^{n - i - 1}}{{ [ {{\nabla ^{i,\lambda }}x(k)}  ]}_{k = a}}} } \big\}{{\mathscr N}_a} \{ {y(k)}  \},
\end{array}}
\end{equation}
\begin{equation}\label{Eq3.111}
{\textstyle
\begin{array}{l}
{{\mathscr N}_a}\big\{ {\sum\nolimits_{i = 0}^{n - 1} { [ {{\nabla ^{i,\lambda }}x ( {k + a - j} ){\nabla ^{n - i - 1,\lambda }}y(j)}  ]_{j = a}^{j = k}} } \big\}\\
 = {{\mathscr N}_a}\big\{ {\sum\nolimits_{i = 0}^{n - 1} {{{ [ {{\nabla ^{i,\lambda }}x(k)}  ]}_{k = a}}{\nabla ^{n - i - 1,\lambda }}y(k)} } \big\}\\
 \hspace{12pt} - {{\mathscr N}_a}\big\{ {{{\sum\nolimits_{i = 0}^{n - 1} {{\nabla ^{i,\lambda }}x(k) [ {{\nabla ^{n - i - 1,\lambda }}y(k)}  ]} }_{k = a}}} \big\}\\
 = {{\mathscr N}_a}\big\{ {\sum\nolimits_{i = 0}^{n - 1} {{{ [ {{\nabla ^{i,\lambda }}x(k)}  ]}_{k = a}}{\nabla ^{n - i - 1,\lambda }}y(k)} } \big\}\\
 \hspace{12pt}  - {{\mathscr N}_a}\big\{ {\sum\nolimits_{i = 0}^{n - 1} {{\nabla ^{n - i - 1,\lambda }}x(k){{ [ {{\nabla ^{i,\lambda }}y(k)}  ]}_{k = a}}} } \big\}\\
 = \sum\nolimits_{i = 0}^{n - 1} {{{ [ {{\nabla ^{i,\lambda }}x(k)}  ]}_{k = a}}{{\mathscr N}_a} \{ {{\nabla ^{n - i - 1,\lambda }}y(k)}  \}} \\
 \hspace{12pt}  - \sum\nolimits_{i = 0}^{n - 1} {{{\mathscr N}_a} \{ {{\nabla ^{n - i - 1,\lambda }}x(k)} \}{{ [ {{\nabla ^{i,\lambda }}y(k)}  ]}_{k = a}}} \\
 = {{\mathscr N}_a} \{ {y(k)}  \}\sum\nolimits_{i = 0}^{n - 1} {{{\big( {\frac{{s - \lambda }}{{1 - \lambda }}} \big)}^{n - i - 1}}{{ [ {{\nabla ^{i,\lambda }}x(k)}  ]}_{k = a}}} \\
 \hspace{12pt} - \frac{1}{{1 - \lambda }}\sum\nolimits_{i = 0}^{n - 1} {{{\sum\nolimits_{j = 0}^{n - i - 2} {{\big( {\frac{{s - \lambda }}{{1 - \lambda }}} \big)^{n - i - 2 - j}} [ {{\nabla ^{j,\lambda }}y(k)} ]} }_{k = a}}{{ [ {{\nabla ^{i,\lambda }}x(k)}  ]}_{k = a}}} \\
 \hspace{12pt} - {{\mathscr N}_a} \{ {x(k)}  \}\sum\nolimits_{i = 0}^{n - 1} {{{\big( {\frac{{s - \lambda }}{{1 - \lambda }}} \big)}^{n - i - 1}}{{ [ {{\nabla ^{i,\lambda }}y(k)}  ]}_{k = a}}} \\
 \hspace{12pt} + \frac{1}{{1 - \lambda }}\sum\nolimits_{i = 0}^{n - 1} {{{\sum\nolimits_{j = 0}^{n - i - 2} {{\big( {\frac{{s - \lambda }}{{1 - \lambda }}} \big)^{n - i - 2 - j}} [ {{\nabla ^{j,\lambda }}x(k)} ]} }_{k = a}}{{ [ {{\nabla ^{i,\lambda }}y(k)}  ]}_{k = a}}} \\
 = {{\mathscr N}_a} \{ {y(k)}  \}\sum\nolimits_{i = 0}^{n - 1} {{{\big( {\frac{{s - \lambda }}{{1 - \lambda }}} \big)}^{n - i - 1}}{{ [ {{\nabla ^{i,\lambda }}x(k)}  ]}_{k = a}}} \\
 \hspace{12pt}- {{\mathscr N}_a} \{ {x(k)}  \}\sum\nolimits_{i = 0}^{n - 1} {{{\big( {\frac{{s - \lambda }}{{1 - \lambda }}} \big)}^{n - i - 1}}{{ [ {{\nabla ^{i,\lambda }}y(k)}  ]}_{k = a}}},
\end{array}}
\end{equation}
where $\sum\nolimits_{i = 0}^{n - 1} {\sum\nolimits_{j = 0}^{n - i - 2} {} }  = \sum\nolimits_{i = 0}^{n - 2} {\sum\nolimits_{j = 0}^{n - i - 2} {} } $ is adopted. Combining (\ref{Eq3.109}) - (\ref{Eq3.111}) with the uniqueness of nabla Laplace transform, the result in (\ref{Eq3.107}) follows.

Owing to the convolution operation, (\ref{Eq3.107}) can be rewritten as
\begin{equation}\label{Eq3.112}
{\textstyle
\begin{array}{rl}
x(k) \ast {\nabla ^{n,\lambda }}y(k) =&\hspace{-6pt} {\nabla ^{n,\lambda }}x(k) \ast y(k)  + \sum\nolimits_{i = 0}^{n - 1} {[ {{\nabla ^{i,\lambda }}x ( {k + a - j}  ){\nabla ^{n - i - 1,\lambda }}y(j)} ]_{j = a}^{j = k}}.
\end{array}}
\end{equation}

Likewise, \cite[Lemma 6]{Wei:2019FDTA}, (\ref{Eq3.92}) and (\ref{Eq3.93}) lead to
\begin{eqnarray}\label{Eq3.113}
\begin{array}{rl}
\sum\nolimits_{j = a + 1}^k {x(k + a + 1 - j){}_a^{\rm R}\nabla _{j }^{\alpha,\lambda} y(j)} =&\hspace{-6pt}x(k) \ast {}_a^{\rm R}\nabla _k^{\alpha ,\lambda }y(k)\\
 =&\hspace{-6pt} x(k) \ast {\nabla ^{n,\lambda }}{}_a^{\rm G}\nabla _k^{\alpha  - n,\lambda }y(k)\\
 =&\hspace{-6pt} {\nabla ^{n,\lambda }}x(k) \ast {}_a^{\rm G}\nabla _k^{\alpha  - n,\lambda }y(k)\\
 &\hspace{-6pt}  + \sum\nolimits_{i = 0}^{n - 1} {[ {{\nabla ^{i,\lambda }}x ( {k + a - j}  ){\nabla ^{n - i - 1,\lambda }}{}_a^{\rm G}\nabla _k^{\alpha  - n,\lambda }y(j)} ]_{j = a}^{j = k}} \\
 =&\hspace{-6pt} {}_a^{\rm G}\nabla _k^{\alpha  - n,\lambda }{\nabla ^{n,\lambda }}x(k) \ast y(k)\\
 &\hspace{-6pt}  + \sum\nolimits_{i = 0}^{n - 1} {[ {{\nabla ^{i,\lambda }}x ( {k + a - j}  ){\nabla ^{n - i - 1,\lambda }}{}_a^{\rm G}\nabla _k^{\alpha  - n,\lambda }y(j)} ]_{j = a}^{j = k}} \\
 =&\hspace{-6pt} {}_a^{\rm C}\nabla _k^{\alpha ,\lambda }x(k)\ast y(k) + \sum\nolimits_{i = 0}^{n - 1} {[ {{\nabla ^{i,\lambda }}x ( {k + a - j}  ){}_a^{\rm R}\nabla _j^{\alpha  - i - 1,\lambda }y(j)} ]_{j = a}^{j = k}}\\
 =&\hspace{-6pt} \sum\nolimits_{j = a + 1}^k {{}_a^{\rm C}\nabla _{j }^{\alpha,\lambda} x(j)y ( {k + a + 1 - j}  )} \\
 &\hspace{-6pt} + \sum\nolimits_{i = 0}^{n - 1} {[ {\nabla ^{i,\lambda }x ( {k + a - j}  ){}_a^{\rm R}\nabla _{j }^{\alpha  - i - 1,\lambda}y(j)} ]_{j = a}^{j = k}}.
\end{array}
\end{eqnarray}
All of these complete the proof.
\end{proof}

Theorem \ref{Theorem3.16} and Theorem \ref{Theorem3.17} are inspired by \cite[Theorem 1]{Wei:2021Auto}, which provides an avenue to calculate tempered fractional difference/sum.
\section{Simulation Study}\label{Section4}
Letting $x(k)=\sin (10 k)$, $a=0$, $k\in\mathbb{N}_{a+1}^{100}$, then the Gr\"{u}nwald-Letnikov difference/sum ${ }_{a}^{\rm G} \nabla_{k}^{\alpha, w(k)} x(k)$ can be calculated with different $w(k)$. Setting $w(k)=0.5^{k-a}$ or $w(k)=0.5^{k-a}+0.01$, the simulated results are shown are Fig. \ref{Fig1}. It can be observed from Fig. \ref{Fig1a} that ${ }_{a}^{\rm G} \nabla_{k}^{\alpha, w(k)} x(k)$ diverges as $k\to+\infty$. More specially, when $\alpha>0$, the result tends to $+\infty$. When $\alpha<0$, the result tends to $-\infty$. In this case, ${}_a^{\rm{G}}\nabla _k^{\alpha ,w(k)}x(k) = \sum\nolimits_{i = 0}^{k - a - 1} {\frac{{(i + 1)\overline {^{ - \alpha  - 1}} }}{{\Gamma ( - \alpha )}}{2^i}\sin (10k - 10i)} $. Notably, ${2^{k - a - 1}} \to  + \infty $ as $k \to  + \infty $, $\sin (10k - 10i) = \sin (10) < 0$ when $i = k - a - 1$. If $\alpha\in(0,1)$, $\frac{{(i + 1)\overline {^{ - \alpha  - 1}} }}{{\Gamma ( - \alpha )}} < 0$ for $i\in\mathbb{N}_0^{k-a-1}$. If $\alpha\in(-1,0)$, $\frac{{(i + 1)\overline {^{ - \alpha  - 1}} }}{{\Gamma ( - \alpha )}} > 0$ for $i\in\mathbb{N}_0^{k-a-1}$. From this, the trend of ${ }_{a}^{\rm G} \nabla_{k}^{\alpha, w(k)} x(k)$ as $k\to+\infty$ can be derived. The typical feature of $w(k)=0.5^{k-a}$ is that $w(k)$ declines rapidly and converges to $0$ while the tempered function $w(k)$ is assumed to be nonzero. Consequently, a nonzero number is introduced, namely, $w(k)=0.5^{k-a}+0.01$. Then, bounded ${ }_{a}^{\rm G} \nabla_{k}^{\alpha, w(k)} x(k)$ follows in Fig. \ref{Fig1b}. Actually, with the increase of $k$, $0.01$ will play a greater role than $0.5^{k-a}$. ${ }_{a}^{\rm G} \nabla_{k}^{\alpha, w(k)} x(k)$ performs like ${ }_{a}^{\rm G} \nabla_{k}^{\alpha} x(k)$ in the steady stage and the tempered case degenerates into the classical case.
\begin{figure}[!htbp]
\centering
\setlength{\abovecaptionskip}{-2pt}
	\vspace{-10pt}
	\subfigtopskip=-2pt
	\subfigbottomskip=2pt
	\subfigcapskip=-10pt
\subfigure[$w(k)=0.5^{k-a}$]{
\begin{minipage}[t]{0.48\linewidth}
\includegraphics[width=1.0\hsize]{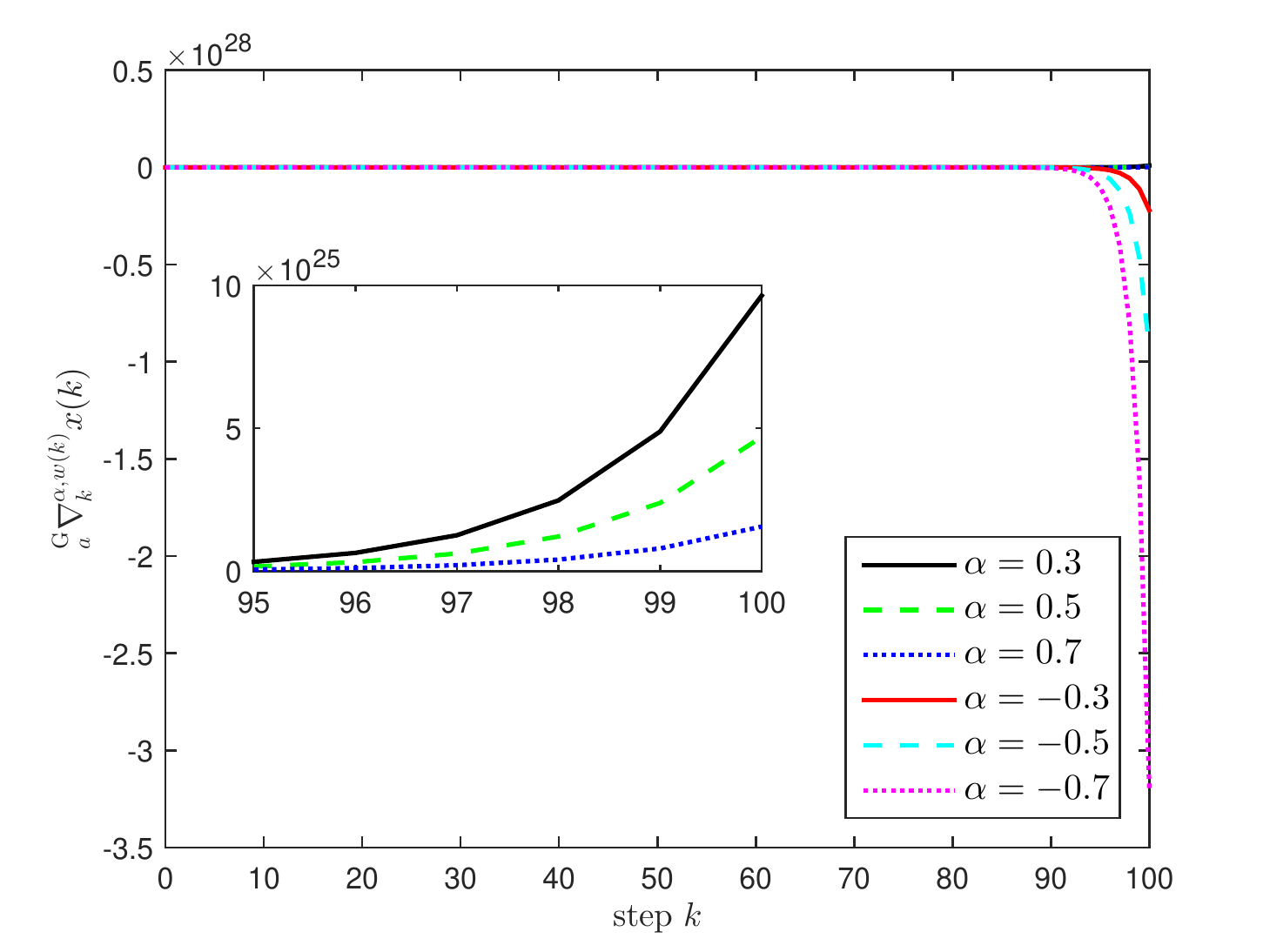}
\label{Fig1a}
\end{minipage}%
}
\subfigure[$w(k)=0.5^{k-a}+0.01$]{
\begin{minipage}[t]{0.48\linewidth}
\includegraphics[width=1.0\hsize]{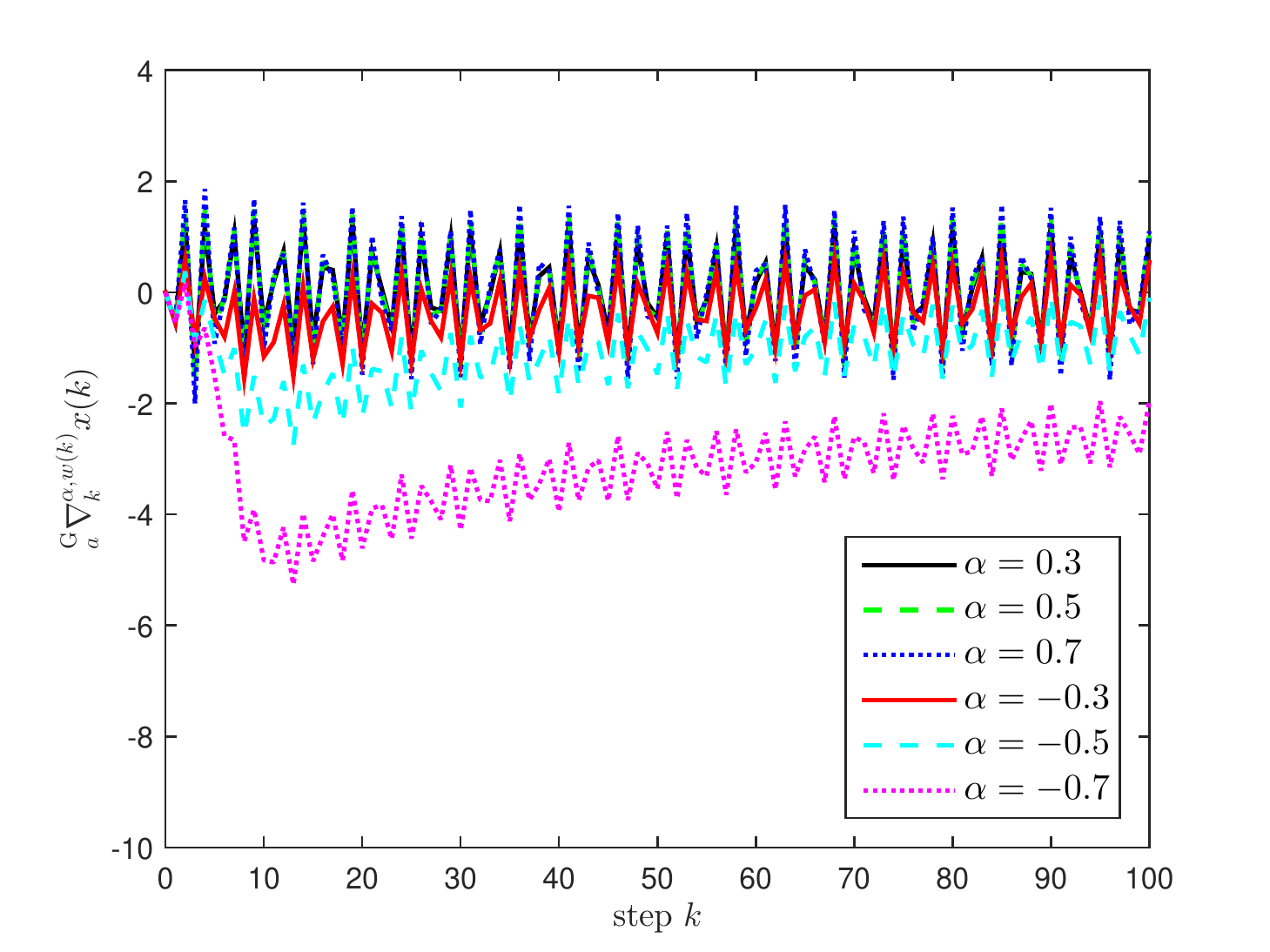}
\label{Fig1b}
\end{minipage}%
}
\caption{Time evolution of ${}_a^{\rm G}\nabla_k^{\alpha,w(k)} x(k)$.}
\label{Fig1}
\end{figure}

To show more details, the following four cases are considered.
\[\left\{ \begin{array}{l}
{\rm{case}}\;1:\;w(k) = {\sqrt 2 ^{k - a}};\\
{\rm{case}}\;2:\;w(k) =  - {\pi ^{k - a}};\\
{\rm{case}}\;3:\;w(k) = {( - 1)^{k - a}};\\
{\rm{case}}\;4:\;w(k) = \sin (\frac{{k\pi }}{2} - \frac{{a\pi }}{2} + \frac{\pi }{4}).
\end{array} \right.\]
The obtained results are shown as Fig. \ref{Fig2}. In case 1, $w(k)$ is positive and monotonically increasing. In case 2, $w(k)$ is negative and monotonically decreasing. In case 3, $w(k)$ is in oscillation. Its positive value and negative value appear alternatively. In case 4, $w(k)$ is also in oscillation. Two positive value will appear after two negative value. It can be found that no matter for the fractional difference or the fractional sum, different $w(k)$ correspond to different ${ }_{a}^{\rm G} \nabla_{k}^{\alpha, w(k)} x(k)$ in Fig. \ref{Fig2}. To clear show the difference, setting case 1 as the standard, the error of ${ }_{a}^{\rm G} \nabla_{k}^{\alpha, w(k)} x(k)$ between each case with the standard is calculated and displayed in Fig. \ref{Fig3}. It is shown that the error always exists and cannot be ignored. Along this way, different tempered functions can be constructed for different scenarios and satisfied different demands, which could provide great potential to apply nabla tempered fractional calculus.

\begin{figure}[!htbp]
\centering
\setlength{\abovecaptionskip}{-2pt}
	\vspace{-10pt}
	\subfigtopskip=-2pt
	\subfigbottomskip=2pt
	\subfigcapskip=-10pt
\subfigure[$\alpha=0.5$]{
\begin{minipage}[t]{0.48\linewidth}
\includegraphics[width=1.0\hsize]{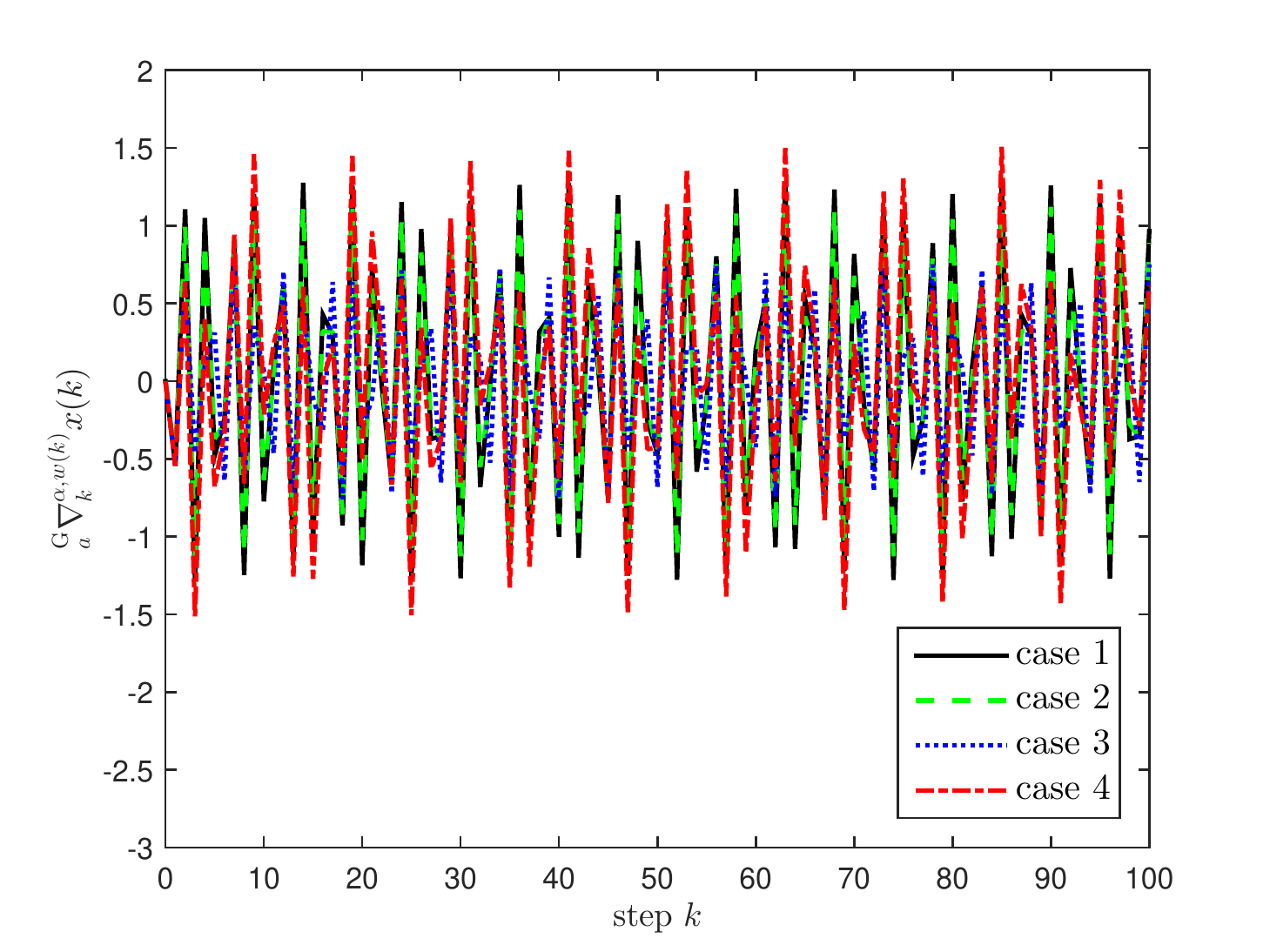}
\label{Fig2a}
\end{minipage}%
}
\subfigure[$\alpha=-0.5$]{
\begin{minipage}[t]{0.48\linewidth}
\includegraphics[width=1.0\hsize]{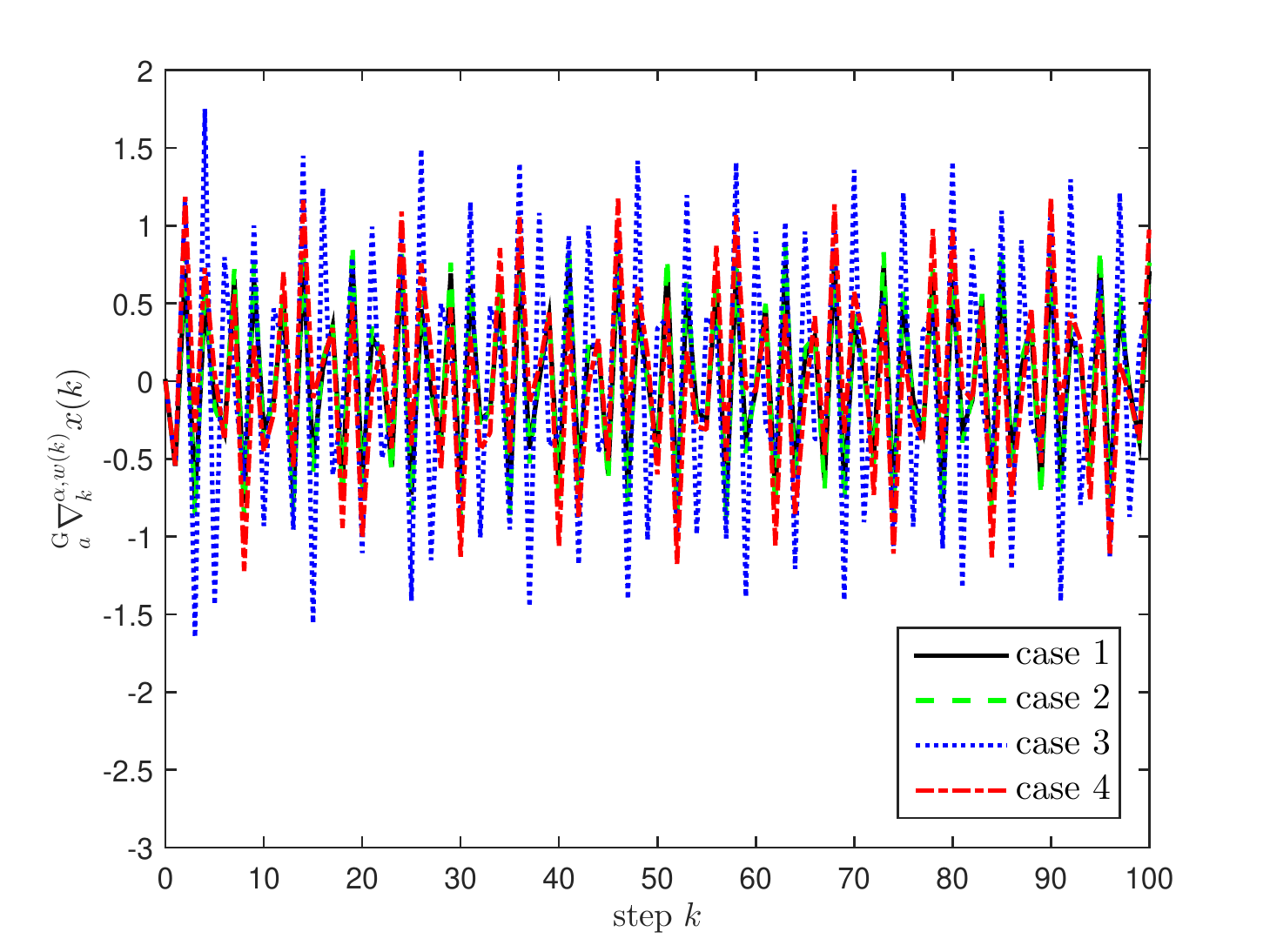}
\label{Fig2b}
\end{minipage}%
}
\centering
\caption{Time evolution of ${}_a^{\rm G}\nabla_k^{\alpha,w(k)} x(k)$.}
\label{Fig2}
\end{figure}

\begin{figure}[!htbp]
\centering
\setlength{\abovecaptionskip}{-2pt}
	\vspace{-10pt}
	\subfigtopskip=-2pt
	\subfigbottomskip=2pt
	\subfigcapskip=-10pt
\subfigure[$\alpha=0.5$]{
\begin{minipage}[t]{0.48\linewidth}
\includegraphics[width=1.0\hsize]{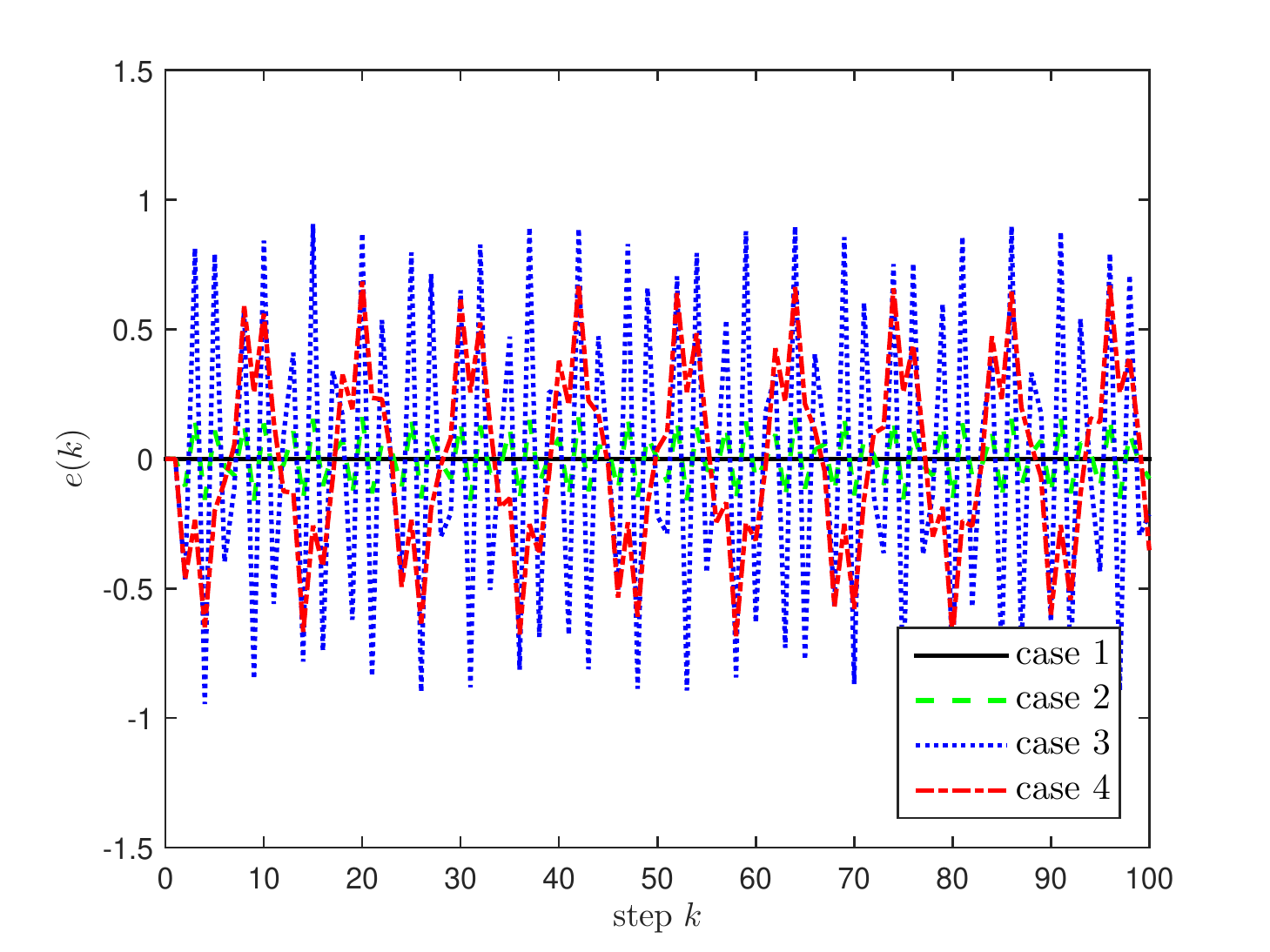}
\label{Fig3a}
\end{minipage}%
}
\subfigure[$\alpha=-0.5$]{
\begin{minipage}[t]{0.48\linewidth}
\includegraphics[width=1.0\hsize]{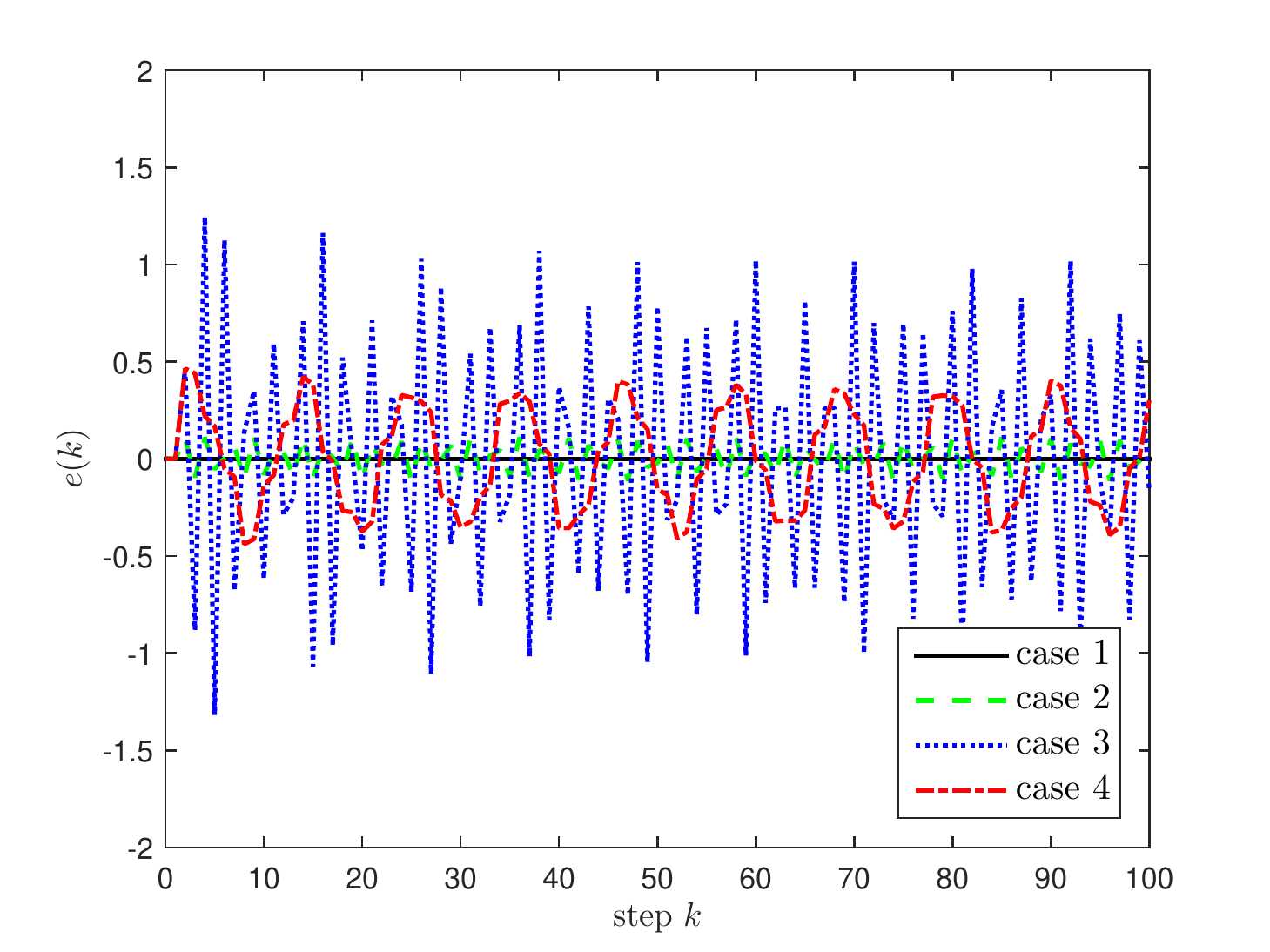}
\label{Fig3b}
\end{minipage}%
}
\centering
\caption{The error of ${}_a^{\rm G}\nabla_k^{\alpha,w(k)} x(k)$ in different case.}
\label{Fig3}
\end{figure}

To further check the value of tempered fractional difference, selecting $x(k)=\sin (10 k)$, $\alpha=0.5$ and the above mentioned four cases, the results are shown as Fig. \ref{Fig4}.
\begin{figure}[!htbp]
\centering
\setlength{\abovecaptionskip}{-2pt}
	\vspace{-10pt}
	\subfigtopskip=-2pt
	\subfigbottomskip=2pt
	\subfigcapskip=-10pt
\subfigure[$w(k) = {\sqrt 2 ^{k - a}}$]{
\begin{minipage}[t]{0.48\linewidth}
\includegraphics[width=1.0\hsize]{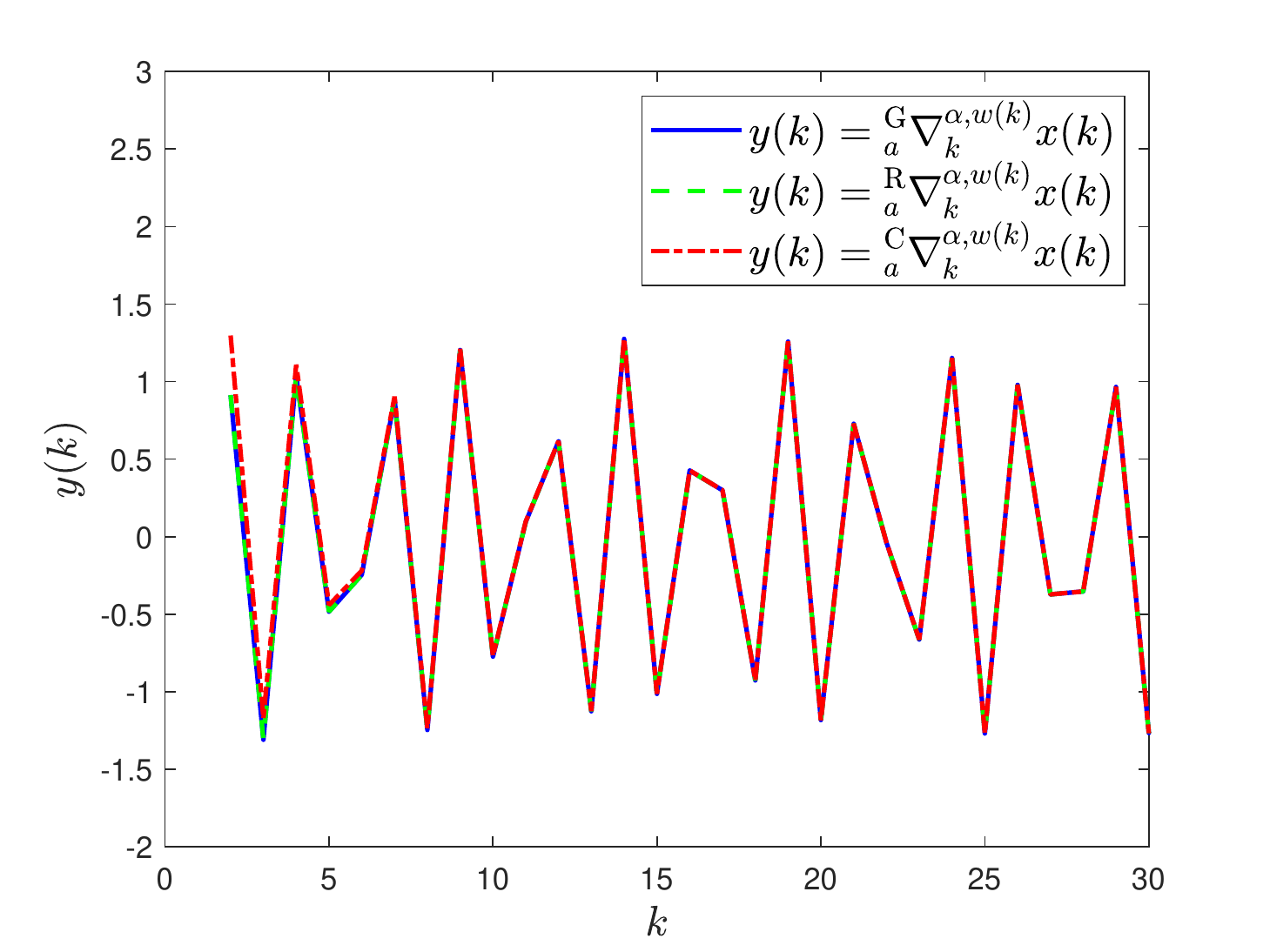}
\label{Fig4a}
\end{minipage}%
}
\subfigure[$w(k) =  - {\pi ^{k - a}}$]{
\begin{minipage}[t]{0.48\linewidth}
\includegraphics[width=1.0\hsize]{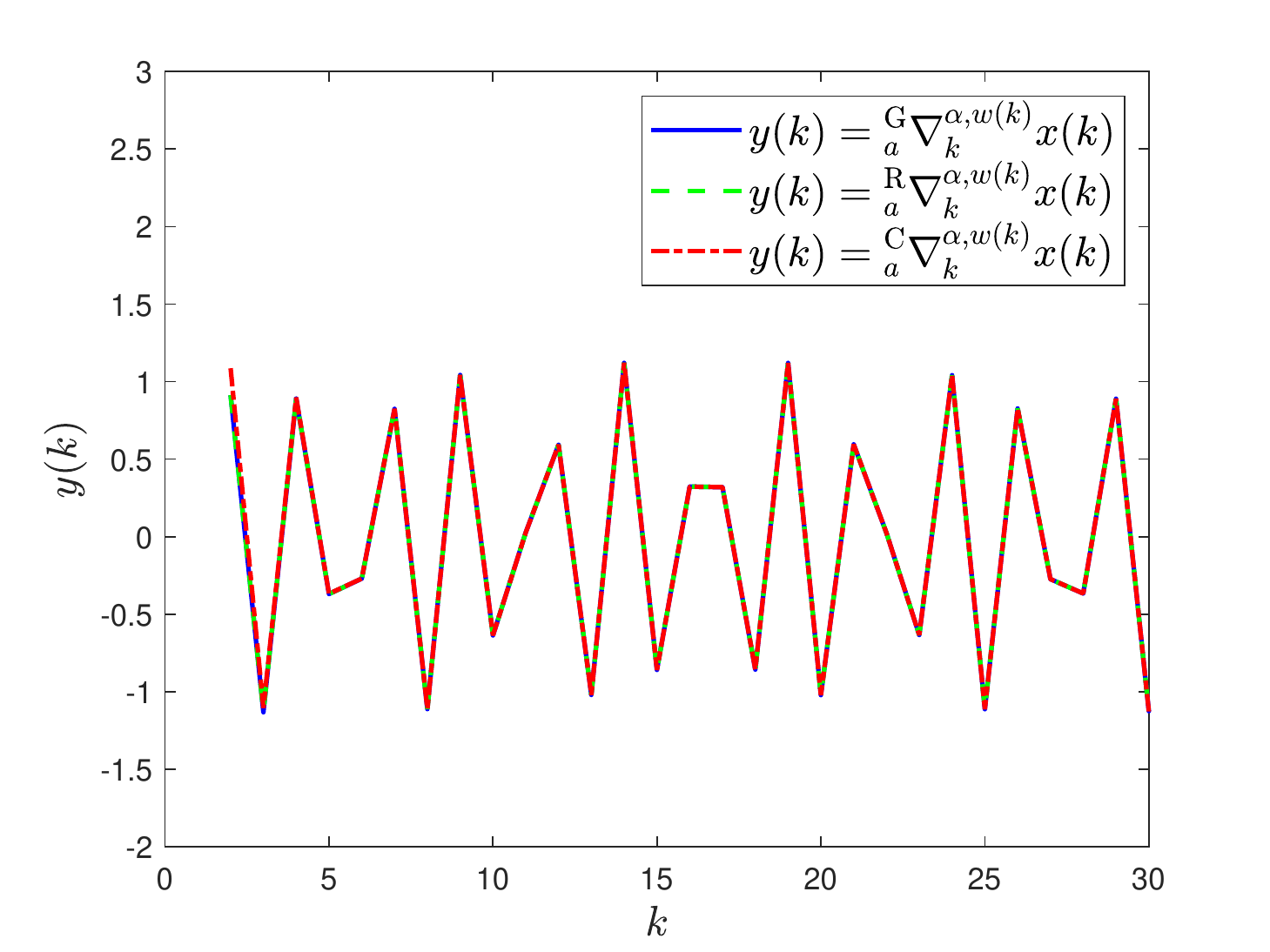}
\label{Fig4b}
\end{minipage}%
}\\
\subfigure[$w(k) = {( - 1)^{k - a}}$]{
\begin{minipage}[t]{0.48\linewidth}
\includegraphics[width=1.0\hsize]{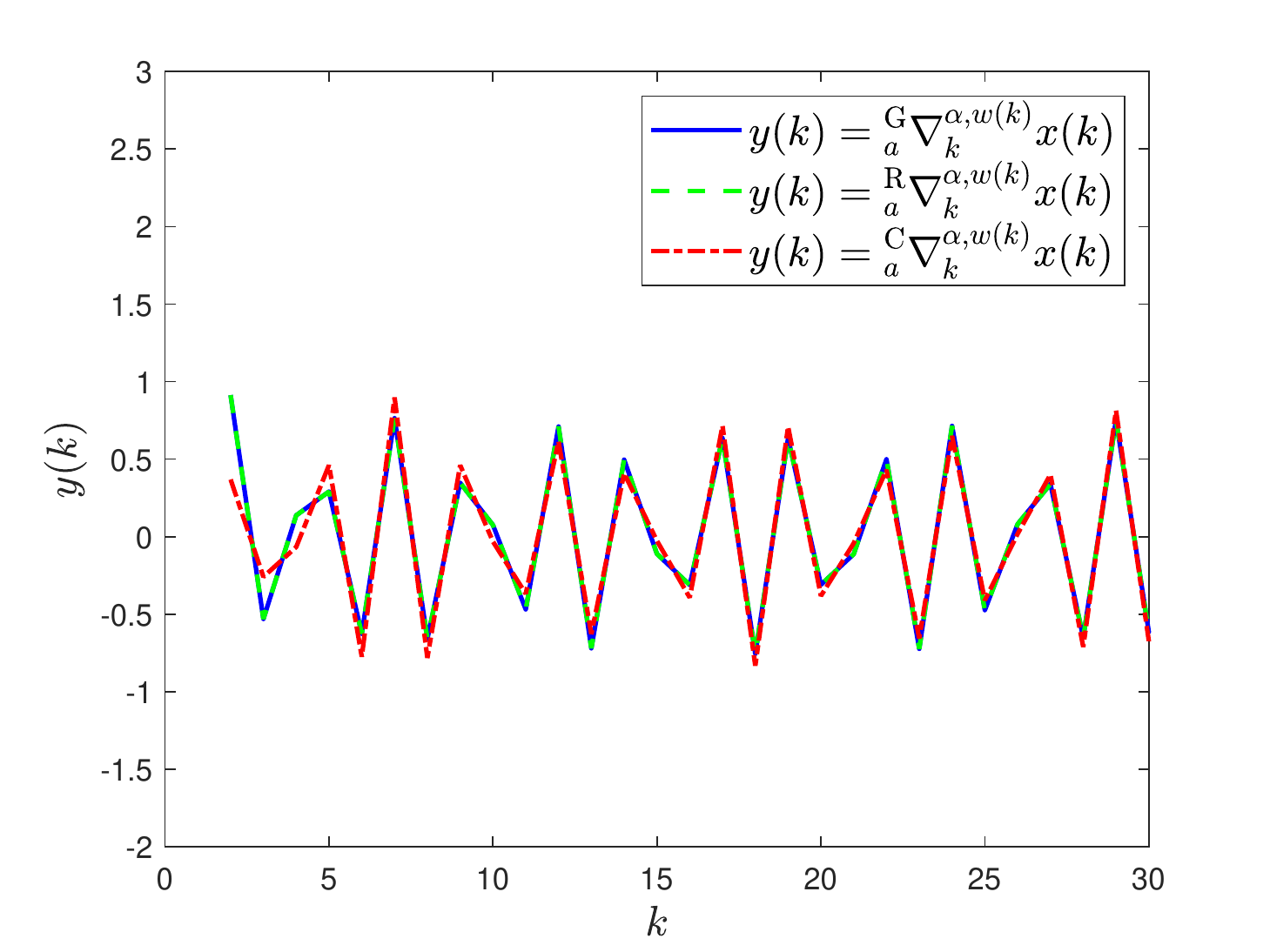}
\label{Fig4c}
\end{minipage}%
}
\subfigure[$w(k) = \sin (\frac{{k\pi }}{2} - \frac{{a\pi }}{2} + \frac{\pi }{4})$]{
\begin{minipage}[t]{0.48\linewidth}
\includegraphics[width=1.0\hsize]{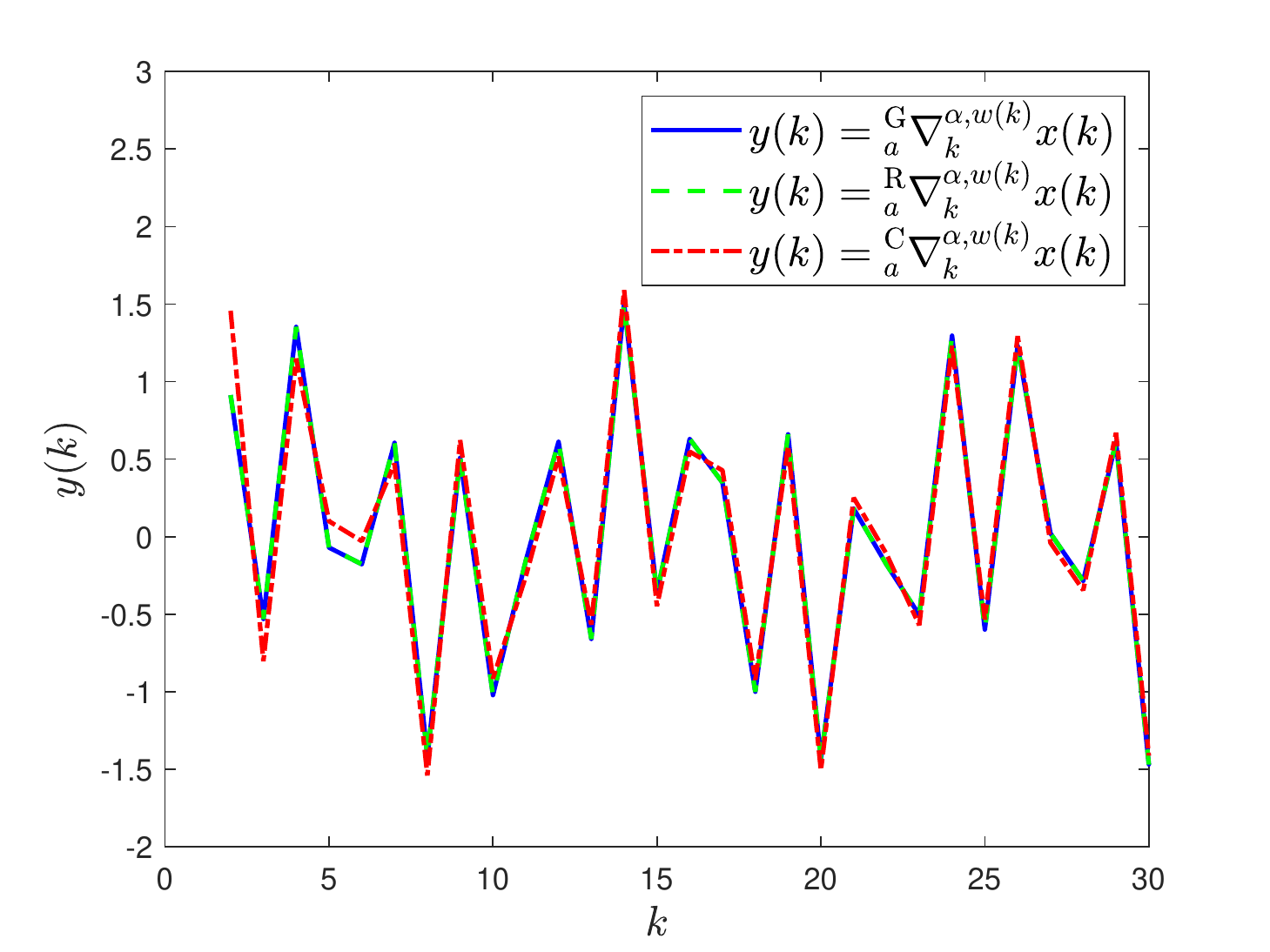}
\label{Fig4d}
\end{minipage}%
}
\centering
\caption{Time evolution of ${}_a^{\rm G}\nabla_k^{\alpha,w(k)} x(k)$, ${}_a^{\rm R}\nabla_k^{\alpha,w(k)} x(k)$ and ${}_a^{\rm C}\nabla_k^{\alpha,w(k)} x(k)$.}
\label{Fig4}
\end{figure}

It can be observed that for each $w(k)$, the difference of the three fractional differences is small. From the quantitative analysis, one has
\[\left\{ \begin{array}{l}
{\rm{case}}\;1:\;-4.4409\times10^{-16}\le {}_a^{\rm{G}}\nabla _k^{\alpha,w(k)} x(k)-{}_a^{\rm{R}}\nabla _k^{\alpha,w(k)} x(k)\le 1.1102\times10^{-15};\\
{\rm{case}}\;2:\;-8.8818\times10^{-16}\le {}_a^{\rm{G}}\nabla _k^{\alpha,w(k)} x(k)-{}_a^{\rm{R}}\nabla _k^{\alpha,w(k)} x(k)\le 3.3307\times10^{-16};\\
{\rm{case}}\;3:\;-1.0131\times10^{-16}\le {}_a^{\rm{G}}\nabla _k^{\alpha,w(k)} x(k)-{}_a^{\rm{R}}\nabla _k^{\alpha,w(k)} x(k)\le 4.9960\times10^{-16};\\
{\rm{case}}\;4:\;-2.2204\times10^{-16}\le {}_a^{\rm{G}}\nabla _k^{\alpha,w(k)} x(k)-{}_a^{\rm{R}}\nabla _k^{\alpha,w(k)} x(k)\le 4.4409\times10^{-16},
\end{array} \right.\]
which demonstrates the correctness of (\ref{Eq3.1}).

\section{Conclusions}\label{Section5}
In this paper, the nabla tempered fractional difference and fractional sum have been investigated originally. A series of analytic properties are developed with rigorous mathematical proof including the equivalence relation, the nabla Taylor formula, and the nabla Laplace transform. It is believed that this work reveals the relationship between time domain and frequency domain of nabla tempered fractional calculus and enriches the knowledge of nabla tempered fractional calculus.

\section*{Acknowledgment}
The work described in this paper was fully supported by the National Natural Science Foundation of China (62273092), the National Key R$\&$D Project of China (2020YFA0714300) and ZhiShan Youth Scholar Program of Southeast University.

%

\phantomsection
\addcontentsline{toc}{section}{References}
\section*{References}
\bibliographystyle{model1-num-names}
\bibliography{database}

\end{document}